\documentclass[pdflatex,sn-mathphys-num]{sn-jnl}% Math and Physical Sciences Numbered Reference Style
\usepackage{graphicx}%
\usepackage{multirow}%
\usepackage{amsmath,amssymb,amsfonts}%
\usepackage{amsthm}%
\usepackage{mathrsfs}%
\usepackage[title]{appendix}%
\usepackage{xcolor}%
\usepackage{textcomp}%
\usepackage{manyfoot}%
\usepackage{booktabs}%
\usepackage{algorithm}%
\usepackage{algorithmicx}%
\usepackage{algpseudocode}%
\usepackage{listings}%
\theoremstyle{thmstyleone}%
\theoremstyle{thmstyletwo}%

\theoremstyle{thmstylethree}%

\usepackage{xcolor}
\usepackage{bm}
\usepackage{graphics}
\definecolor{refgreen}{rgb}{0,0.5,0}
\usepackage{tikz}
\usetikzlibrary{arrows}
\usepackage{adjustbox}

\usepackage{leftidx}

\usepackage{amsthm}
\newtheorem{thm}{Theorem}[section]
\newtheorem{lem}[thm]{Lemma}
\newtheorem{prop}[thm]{Proposition}
\newtheorem{rem}[thm]{Remark}

\newcommand\bfd{{\mathbf d}}
\newcommand\bfe{{\mathbf e}}
\newcommand\bff{{\mathbf f}}
\newcommand\bfg{{\mathbf g}}

\newcommand\bfn{{\mathbf n}}
\newcommand\bfr{{\mathbf r}}

\newcommand\bfu{{\mathbf u}}
\newcommand\bfv{{\mathbf v}}
\newcommand\bfw{{\mathbf w}}
\newcommand\bfx{{\mathbf x}}
\newcommand\bfy{{\mathbf y}}
\newcommand\bfz{{\mathbf z}}
\newcommand\bfA{{\mathbf A}}

\newcommand\bfE{{\mathbf E}}
\newcommand\bfH{{\mathbf H}}
\newcommand\bfF{{\mathbf F}}

\newcommand\bfK{{\mathbf K}}
\newcommand\bfR{{\mathbf R}}
\newcommand\bfM{{\mathbf M}}

\newcommand\bbM{{\mathbf M}}
\newcommand\bbA{{\mathbf A}}

\newcommand\bfV{{\mathbf V}}

\newcommand\bfvartheta{{\boldsymbol \vartheta}}

\newcommand\calF{{\cal F}}
\newcommand\calO{{\mathcal O}}

\newcommand{\D}{{\textnormal d}}
\newcommand{\R}{\mathbb{R}}

\DeclareMathOperator{\Real}{Re}

\newcommand{\Ga}{\varGamma}

\renewcommand{\nu}{\textnormal{n}}
\newcommand{\matdev}{\partial^{\bullet}}

\newcommand{\TbfM}{\leftidx{^\top}{\mathbf{M}}{}}

\newcommand{\TbfK}{\leftidx{^\top}{\bfK}{}}
\newcommand{\bfde}{\mathbf{\dot e}}
\newcommand{\bfdu}{\mathbf{\dot u}}
\newcommand{\bfdx}{\mathbf{\dot x}}
\newcommand{\bfdd}{\mathbf{\dot d}}
\newcommand{\bftx}{\mathbf{\widetilde x}}
\newcommand{\bftu}{\mathbf{\widetilde u}}
\newcommand{\bftw}{\mathbf{\widetilde w}}
\newcommand{\bfte}{\mathbf{\widetilde e}}
\newcommand{\laplace}{\Delta}

\newcommand{\mat}{\partial^{\bullet}}

\newcommand{\dif}{\text{d}}
\newcommand{\eps}{\varepsilon}

\newcommand{\nb}{\nabla}

\newcommand{\spn}{\textnormal{span}}

\def \to {\rightarrow}

\newcommand{\V}{V}

\newcommand{\phin}{\varphi^\n}
\newcommand{\phiH}{\varphi^H}
\newcommand{\phiw}{\varphi^{\V}}

\newcommand{\phiwn}{\varphi^{z}}

\newcommand{\K}{\bfK}

\newcommand{\tM}{\wt\bfM}
\newcommand{\tA}{\wt\bfA}
\newcommand{\tK}{\wt\bfK}

\newcommand\Q{Q}
\newcommand\Qh{Q_h}

\newcommand\andquad{\quad\hbox{ and }\quad}
\newcommand\forquad{\quad\hbox{ for }\quad}

\newcommand{\xs}{\bfx_\ast}

\newcommand{\eu}{\bfe_u}
\newcommand{\ev}{\bfe_v}
\newcommand{\ex}{\bfe_x}
\newcommand{\ew}{\bfe_w}

\newcommand{\doteu}{\dot\bfe_u}

\newcommand{\dotex}{\dot\bfe_x}

\newcommand{\du}{\bfd_u}
\newcommand{\dv}{\bfd_v}
\newcommand{\dw}{\bfd_w}

\newcommand{\wt}{\widetilde}
\newcommand{\n}{\nu}

\newcommand{\dof}{N}

\begin{document}

\title[Error estimates for BDF discretizations of Willmore flow]{Error estimates for $A$-stable backward difference full discretizations of Willmore flow of closed surfaces}

%%=============================================================%%
%% GivenName	-> \fnm{Joergen W.}
%% Particle	-> \spfx{van der} -> surname prefix
%% FamilyName	-> \sur{Ploeg}
%% Suffix	-> \sfx{IV}
%% \author*[1,2]{\fnm{Joergen W.} \spfx{van der} \sur{Ploeg} 
%%  \sfx{IV}}\email{iauthor@gmail.com}
%%=============================================================%%

\author*[1]{\fnm{Nils} \sur{Bullerjahn}}\email{bullerja@math.uni-paderborn.de}
\equalcont{These authors contributed equally to this work.}

\author[1]{\fnm{Bal\'azs} \sur{Kov\'acs}}%\email{balazs.kovacs@math.uni-paderborn.de}
\equalcont{These authors contributed equally to this work.}

%\author[1,2]{\fnm{Third} \sur{Author}}\email{iiiauthor@gmail.com}
%\equalcont{These authors contributed equally to this work.}

\affil*[1]{\orgdiv{Institute of Mathematics}, \orgname{Paderborn University}, \orgaddress{\street{Warburgerstr.~100}, \city{Paderborn}, \postcode{33098}, \state{NRW}, \country{Germany}}}

%\affil[2]{\orgdiv{Department}, \orgname{Organization}, \orgaddress{\street{Street}, \city{City}, \postcode{10587}, \state{State}, \country{Country}}}

%\affil[3]{\orgdiv{Department}, \orgname{Organization}, \orgaddress{\street{Street}, \city{City}, \postcode{610101}, \state{State}, \country{Country}}}

%%==================================%%
%% Sample for unstructured abstract %%
%%==================================%%

\abstract{A proof of optimal-order $H^1$-norm error estimates is given for $A$-stable backward difference full discretizations (of order 1 and 2) of Willmore flow for closed two-dimensional surfaces. The numerical method discretizes a coupled system of evolution equations by evolving surface finite elements of polynomial degree at least two in space and backward difference method of order 1 or 2 in time. The convergence analysis is based on a stability analysis, based on energy estimates exploiting the anti-symmetric structure of the second-order system, in combination with Dahlquist's $G$-stability and the multiplier techniques of Nevanlinna and Odeh, with a new upper bound in the spirit of Dahlquist. Numerical experiments illustrate and complement the theoretical results.}

\keywords{Willmore flow, geometric evolution equations, evolving surface finite elements, backward difference method (BDF), stability, convergence analysis, energy estimates}

%%\pacs[JEL Classification]{D8, H51}

\pacs[MSC Classification]{% as examples
	35R01, % PDEs on manifolds
	53C44, % Geometric evolution equations (mean curvature flow, Ricci flow, etc.)
	65M60, % Finite element, Rayleigh-Ritz and Galerkin methods for initial value and initial-boundary value problems involving PDEs
	65M15, % Error bounds for initial value and initial-boundary value problems involving PDEs
	65M12. % Stability and convergence of numerical methods for initial value and initial-boundary value problems involving PDEs
	}

\maketitle

\section{Introduction}

In this paper we present a fully discrete numerical scheme for the Willmore flow of closed two-dimensional surfaces, by evolving surface finite elements in space of polynomial degree $k\geq2$ and $A$-stable backward difference formulae, i.e.~BDF methods of order $1$ and $2$, in time. We establish optimal-order fully discrete $H^1$-norm error estimates between the numerical approximation and a sufficiently regular solution, see Theorem~\ref{Theorem:MainConvergence}, extending the semi-discrete results of \cite{KovacsLiLubich2021} to the fully discrete setting. 

The Willmore flow is defined as the $L^2$ gradient flow of a surface for the elastic bending energy or Willmore energy, see \cite{Willmore1965,Willmore1993}. The Willmore flow arises in many applications, such as, modelling liquid bilayers \cite{Helfrich1973}, biomembranes \cite{ElliottStinner2010}, vesicles \cite{BGN2015}, regularization of phase-field systems \cite{ChenLowengrubShenWang2018}, and in the analysis of curvature on surfaces; see \cite{MarquesNeves2016} proving the Willmore conjecture. 

There is an extensive literature on numerical methods for Willmore flow \emph{of surfaces}. Numerical schemes based on evolving surface finite elements were proposed in \cite{MayerSimonett,MayerSimonett_SI}, \cite{Rusu2005}, \cite{Dziuk2008}, \cite{BGN2007a} and \cite{BGN2008} based on different variational formulations, indicating convergence in practice by numerical simulations. Recently the authors in \cite{KovacsLiLubich2021} have shown semi-discrete optimal-order convergence, by a novel formulation based on a coupled system of geometric evolution equations. 

In this paper we extend the results of \cite{KovacsLiLubich2021}. We study the full discretization of the weak formulation, obtained from the geometric evolution equations, using evolving surface finite elements of polynomial degree $k\geq2$ in space and $A$-stable backward difference methods (BDF methods) in time, $q=1,2$. The geometric evolution equations here are slightly \emph{different} from the ones derived and used in \cite{KovacsLiLubich2021}: a subtle yet impactful change yields positivity of a critical nonlinear part of the equation, see Remark~\ref{rem:zHChange}. We prove optimal-order fully discrete error estimates by performing a stability and consistency analysis in this fully discrete setting, using and extending techniques from \cite{KovacsLiLubich2019,KovacsLiLubich2021} and \cite{BullerjahnKovacs2024}, respectively. To our knowledge there are no fully discrete error estimates available for Willmore flow of closed two-dimensional surfaces.

The main issue in this paper is proving stability, hence bounding the errors in terms of consistency defects and errors in starting values. We follow the approach of the semi-discrete proof \cite{KovacsLiLubich2021}, combining multiple energy estimates exploiting the \emph{anti-symmetric} structure of the second-order system for the geometric quantities. The adaptation of the energy estimates to the fully discrete setting uses a method combining results from the $G$-stability theory of Dahlquist \cite{Dahlquist1978} and the multiplier technique of Nevanlinna and Odeh \cite{NevanlinnaOdeh1981}. For PDEs, this approach was first used for linear parabolic differential equations on evolving surfaces in \cite{LubichMansourVenkataraman2013}, testing with the errors, and later on for abstract quasi-linear parabolic problems in \cite{AkrivisLubich2015}. Fully discrete energy estimates -- testing with the discrete time derivatives of the errors -- were developed for mean curvature flow in \cite{KovacsLiLubich2019}. We use these techniques in a similar way as in \cite{BullerjahnKovacs2024}, where the Cahn--Hilliard equation with dynamic boundary conditions was analyzed, which has an analogous \emph{anti-symmetric} structure. The resulting scheme of energy estimates is sketched in Figure~\ref{fig:EnergyEstimates}. In addition we use technical lemmas relating different finite element surfaces that were shown in \cite{KovacsLiLubichPower2017} and \cite{KovacsLiLubich2019}. The main new difficulty in these estimates comes from estimating a term on the right-hand side, for which we establish a novel estimate in the spirit of Dahlquist's $G$-stability result, but in the opposite direction, see Lemma~\ref{Lemma:DahlquistUpper}. Finally, we combine the obtained estimates for the geometric evolution equations as in \cite{KovacsLiLubich2019} with estimates for the velocity law, to obtain the stability result.

The consistency analysis. i.e. proving estimates for the defects and their discrete derivatives, uses the semi-discrete consistency results from \cite{KovacsLiLubich2021} to establish the fully discrete consistency bounds, following the lines of the fully discrete consistency analysis in \cite{BullerjahnKovacs2024}, with the adaptation to evolving surface finite elements as in \cite{KovacsLiLubich2019}.

The paper is structured as follows: Section~\ref{Sec:Evolution Eq for Willmore flow}, introduces basic notations and geometric concepts and establishes the \emph{slightly modified} evolution equations for the mean curvature and the normal vector along Willmore flow. In Section~\ref{Sec:Full Discretization} we introduce the fully discrete scheme by evolving surface finite elements and the BDF methods, and in Section~\ref{Sec:MainResult} we present the optimal-order fully discrete convergence result. In Section~\ref{Sec:Stability} we establish the stability estimates, and Section~\ref{Sec:Consistency} is devoted to the consistency analysis. In Section~\ref{Sec:Numerical Results} we present some numerical experiments to illustrate and complement our theoretical results.

\section{Evolution equations for Willmore flow} \label{Sec:Evolution Eq for Willmore flow}

\subsection{Basic notions and notation}%\footnote{This subsection is taken verbatim from \cite{MCF}.} }
\label{subsection: basic notions}

{\it (The text of this preparatory subsection is taken verbatim from \cite[Section~2.1]{KovacsLiLubich2021}.)} 
We consider the evolving two-dimensional closed surface 
$$
\Ga(t) = \{ X(p,t) \,:\, p \in \Ga^0 \}
$$
as the image of a smooth mapping $X \colon \Ga^0\times [0,T]\to \R^3$ such that $X(\cdot,t)$ is an embedding for every $t$. Here, $\Ga^0$ is a smooth closed initial surface, and $X(p,0)=p$. 
In view of the subsequent numerical discretization, it is convenient to think of $X(p,t)$ as the position at time $t$ of a moving particle with label $p$, and of $\Ga(t) $ as a collection of such particles. 
To indicate the dependence of the surface on~$X$, we will write
$$
\Ga(t) = %\Ga[X_t] =
\Ga[X(\cdot,t)] , \quad\hbox{ or briefly}\quad \Ga[X]
$$
when the time $t$ is clear from the context. The {\it velocity} $v(x,t)\in\R^3$ at a point $x=X(p,t)\in\Gamma(t)$  equals
\begin{equation} \label{velocity}
\partial_t X(p,t)= v(X(p,t),t).
\end{equation}
For a known velocity field  $v$,  the position $X(p,t)$ at time $t$ of the particle with label $p$ is obtained by solving the ordinary differential equation \eqref{velocity} from $0$ to $t$ for a fixed $p$.

For a function $u(x,t)$ ($x\in \Gamma(t)$, $0\le t \le T$) we denote the {\it material derivative} (with respect to the parametrization $X$) as
$$
\mat u(x,t) = \frac{\D}{\D t} u(X(p,t),t) \quad\hbox{ for } \ x=X(p,t).
$$
For the following notions, see the review \cite{DeckelnichDziukElliott2005Acta} or \cite[Appendix~A]{Ecker2012} or any textbook on differential geometry. 
On any regular surface $\Gamma\subset\R^3$, we denote by $\nabla_{\Ga}u \colon \Gamma\to\R^3$ the  {\it tangential gradient} of a function $u:\Gamma\to\R$, and in the case of a vector-valued function $u=(u_1,u_2,u_3)^\top \colon \Gamma\to\R^3$, we let
$\nabla_{\Ga} u = (\nabla_{\Ga}u_1, \nabla_{\Ga}u_2, \nabla_{\Ga}u_3)$. We thus use the convention that the gradient of $u$ has the gradient of the components as column vectors. 
We denote by $\nabla_{\Ga} \cdot f$ the {\it surface divergence} of a vector field $f$ on $\Gamma$, 
and by %$\laplace_{\Ga[X]} u=\nabla_{\Ga[X]}\cdot \nabla_{\Ga[X]}u$
$\laplace_{\Ga} u=\nabla_{\Ga}\cdot \nabla_{\Ga}u$ the {\it Laplace--Beltrami operator} applied to $u \colon \Gamma\to\R$. In the case of a
vector-valued function $u=(u_1,u_2,u_3)^\top \colon \Gamma\to\R^3$, we set componentwise $\laplace_{\Ga} u = (\laplace_{\Ga} u_1,\laplace_{\Ga} u_2,\laplace_{\Ga} u_3)^\top$. 
(In the case of a tangential vector field $u$, this componentwise Laplace--Beltrami operator differs from the intrinsic definition of the Laplace--Beltrami operator on tangential vector fields.)

%{\color{blue}If $f$ is a general vector field  on $\Gamma$, then we define its surface divergence as 
%$$\nabla_{\Ga} \cdot f:=\nabla_{\Ga} \cdot P_{\rm tan}f ,$$ where $P_{\rm tan}f$ denotes the tangential component of $f$ at each point of the surface.} 

We denote the unit outer normal vector field to $\Gamma$ by $\n \colon \Gamma\to\R^3$. Its surface gradient contains the (extrinsic) curvature data of the surface $\Gamma$. At every $x\in\Gamma$, the matrix of the {\it extended Weingarten map},
$$
A(x)=\nabla_\Gamma \n(x),
$$ 
is a symmetric $3\times 3$ matrix (see, e.g., \cite[Proposition~20]{Walker2015}). Apart from the eigenvalue $0$ with eigenvector $\n(x)$, its other two eigenvalues are
the principal curvatures $\kappa_1$ and $\kappa_2$ at the point $x$ on the surface. They determine the fundamental quantities
%\begin{align}
%\label{Def-H-A2}
$$
H:={\rm tr}(A)=\kappa_1+\kappa_2, \qquad 
|A|^2 = \kappa_1^2 +\kappa_2^2 ,
$$
%\end{align}
where $|A|$ denotes the Frobenius norm of the matrix $A$.
Here, $H$ is called the {\it mean curvature} (as in most of the literature, we do not put a factor 1/2). 
%{\it Gaussian curvature} is
%$$
%K:=  \kappa_1 \kappa_2 = \half (H^2 - |A|^2) .
%$$

\subsection{The system of equations used for discretization}
\label{section:evolution equation system}

The semi-discretisation in \cite{KovacsLiLubich2021} and the full discretization herein are both based on a coupled fourth-order system, first derived in \cite[Section~2]{KovacsLiLubich2021}, for the geometric quantities of Willmore flow, reformulated as a \emph{system} of non-linear second-order parabolic equations on the surface coupled to ordinary differential equations for the surface positions via the velocity law. The analysis herein requires a slight modification of this system.

\subsubsection{The original system of \cite{KovacsLiLubich2021}}
The coupled system below was originally derived in \cite[Section~2.3]{KovacsLiLubich2021} and used for the semi-discretization therein: 
The system of second-order parabolic equations for the normal vector $\nu\colon \Ga \to \R^3$, the mean curvature $H \colon \Ga \to \R$ and the two new variables $z\colon \Ga \to \R^3$, $V\colon \Ga \to \R$, where $V$ is the normal velocity of the surface, is given by
\begin{subequations}
\label{eq:Willmore evolution equations - system - original}
\begin{align}
	\matdev H =&\ -\Delta_{\Ga[X]} V - |A|^2 V, \label{eq:Willmore evolution equations - system - original a} \\
	V=&\ \Delta_{\Ga[X]} H + Q, \\
	\matdev \nu =&\ -\Delta_{\Ga[X]} z + (HA-A^2) z \nonumber \\
	&\ + |\nabla_{\Ga[X]} H|^2\nu - 2(\nabla_{\Ga[X]}\cdot (A\nabla_{\Ga[X]}H)) \nu - A^2 \nabla_{\Ga[X]} H - \nabla_{\Ga[X]}Q, \label{eq:Willmore evolution equations - system - original c} \\
	z=&\ \Delta_{\Ga[X]} \nu + |A|^2 \nu,
\end{align}
\end{subequations} 
where $A$ denotes the extended Weingarten map, $A(x)=\nabla_{\Ga[X]} \nu(x)$, and $Q=-\frac12 H^3 + |A|^2H$. 

\subsubsection{A modified coupled system}
\label{section:modified system}

The stability analysis of the full discretisation requires us to make a \emph{crucial but subtle change} to the evolution equation \eqref{eq:Willmore evolution equations - system - original c}, to achieve a non-linear term which has a sign.
Namely we used the equality $z=\nabla_{\Ga[X]} H$, see \cite[equations~(2.9), (2.10d)]{KovacsLiLubich2021}, to partially eliminate the dependence on $z$ on the right-hand side of \eqref{eq:Willmore evolution equations - system - original c}, and to render the remaining part to be negative, similarly to \eqref{eq:Willmore evolution equations - system - original a}.

The system of second-order parabolic equations for the normal vector $\nu\colon \Ga \to \R^3$, the mean curvature $H \colon \Ga \to \R$ and the two auxiliary variables $z\colon \Ga \to \R^3$, $V\colon \Ga \to \R$, is given by
\begin{subequations}
\label{eq:Willmore evolution equations - system}
% (a)
%	\begin{align}
	%		\matdev H =&\ -\Delta_{\Ga[X]} V - |A|^2 V, \label{eq:Willmore evolution equations - system a}\\
	%		V=&\ \Delta_{\Ga[X]} H + Q, \\
	%		\matdev \nu =&\ -\Delta_{\Ga[X]} z +(HA-2A^2) \nabla_{\Ga[X]}H \nonumber \\
	%		&\  + |\nabla_{\Ga[X]} H|^2\nu - 2(\nabla_{\Ga[X]}\cdot (A\nabla_{\Ga[X]}H)) \nu - \nabla_{\Ga[X]}Q, \label{eq:Willmore evolution equations - system c}\\
	%		z=&\ \Delta_{\Ga[X]} \nu + |A|^2 \nu,
	%	\end{align}
% (b)
\begin{align}
	\matdev H =&\ -\Delta_{\Ga[X]} V - |A|^2 V, \label{eq:Willmore evolution equations - system a}\\
	V=&\ \Delta_{\Ga[X]} H + Q, \\
	\matdev \nu =&\ -\Delta_{\Ga[X]} z - A^2 z \nonumber \\
	&\ +(HA-A^2) \nabla_{\Ga[X]}H  + |\nabla_{\Ga[X]} H|^2\nu - 2(\nabla_{\Ga[X]}\cdot (A\nabla_{\Ga[X]}H)) \nu - \nabla_{\Ga[X]}Q, \label{eq:Willmore evolution equations - system c}\\
	z=&\ \Delta_{\Ga[X]} \nu + |A|^2 \nu,
\end{align}
\end{subequations} 
with the same notions as before: $A(x)=\nabla_{\Ga[X]} \nu(x)$ and $Q=-\frac12 H^3 + |A|^2H$. 

%\begin{rem} \label{rem:zHChange}
%We have made one subtle change to the evolution equations developed in \cite[Section~2.3]{KovacsLiLubich2021}. Namely we used the equality $z=\nabla_{\Ga[X]} H$, see e.g. \cite[equations~(2.9),(2.10d)]{KovacsLiLubich2021}, to eliminate the dependence on $z$ on the right-hand side of \eqref{eq:Willmore evolution equations - system c}. This will become important in the error analysis, since the term $(HA-A^2)$ has no sign compared to the analogous term $|A|^2$ in \eqref{eq:Willmore evolution equations - system a}.
%\end{rem}

Both systems of second-order equations is coupled to the equations for position and velocity:
\begin{subequations} \label{eq:PositionVelocity}
\begin{align}
	\partial_t X =&\ v \circ X, \\
	v=&\ V\nu.
\end{align}
\end{subequations}

This change will become important in the error analysis, since the term $(HA-A^2)$ in \eqref{eq:Willmore evolution equations - system - original c} has no sign compared to the analogous terms $-|A|^2$ and $-A^2$ in \eqref{eq:Willmore evolution equations - system a} and \eqref{eq:Willmore evolution equations - system c}. Each being the non-linear factors in front of the auxiliary variables $V$ and $z$, respectively:
\begin{align*}
\text{non-linearities for $V$ and $z$ in \eqref{eq:Willmore evolution equations - system - original}}: \qquad
&\ \left\{
\begin{aligned}
	- |A|^2 V & , \\
	+ (HA-A^2) z & ;
\end{aligned}
\right. \\
% (a)
%	\text{non-linearities for $V$ and $z$ in \eqref{eq:Willmore evolution equations - system}}:  \qquad
%	&\ \left\{
%	\begin{aligned}
	%		- |A|^2 V & , \\
	%		\hphantom{+ (HA-2A^2)}0 & .
	%	\end{aligned}
%	\right. 
% (b)
\text{non-linearities for $V$ and $z$ in \eqref{eq:Willmore evolution equations - system}}:  \qquad
&\ \left\{
\begin{aligned}
	- |A|^2 V & , \\
	\hphantom{+ (HA)}-A^2 z & .
\end{aligned}
\right. 
\end{align*}

The numerical discretization is based on a weak formulation of \eqref{eq:Willmore evolution equations - system} with \eqref{eq:PositionVelocity}: we search for $(H,V,\n,z)$ such that %, with the abbreviation $\Q = \tfrac12 H^3 - |A|^2 H$ for the cubic term,
\begin{subequations}
\label{eq:weak form}
\begin{align}
	%		% velocity v
	%		\label{eq:weak form - v}
	%		& \int_{\Ga[X]} \!\!\! \nb_{\Ga[X]} v \cdot  \nb_{\Ga[X]} \phiv + 
	%		\int_{\Ga[X]} \!\!\! v \cdot \phiv 
	%		\nonumber \\ &
	%		= \int_{\Ga[X]} \!\!\! \nb_{\Ga[X]}( \V \n) \cdot \nb_{\Ga[X]}\phiv + 
	%		\int_{\Ga[X]} \!\!\! \V \n \cdot \phiv , \quad \andquad \\[4mm]
	% mean curvature H
	\label{eq:weak form - H}
	&	 \int_{\Ga[X]} \!\!\! \mat H \, \phiH - \int_{\Ga[X]} \!\!\! \nb_{\Ga[X]} \V \cdot  \nb_{\Ga[X]} \phiH = -\int_{\Ga[X]} \!\!\! |A|^2 \,  \V \, \phiH ,
	\\[1mm]
	% normal velocity V
	\label{eq:weak form - V}
	& \int_{\Ga[X]} \!\!\! \V \phiw + \int_{\Ga[X]} \!\!\! \nb_{\Ga[X]} H \cdot \nb_{\Ga[X]} \phiw = \int_{\Ga[X]} \!\!\!  \Q \, \phiw ; 
	\\[4mm]
	% normal vector \nu
	% (a)
	%	\label{eq:weak form - nu}
	%	&	 \int_{\Ga[X]} \!\!\! \mat \n \cdot \phin - \int_{\Ga[X]} \!\!\! \nb_{\Ga[X]} z \cdot \nb_{\Ga[X]} \phin 
	%	=  \int_{\Ga[X]} \! (H A - 2A^2) \nb_{\Ga[X]} H \cdot \phin \nonumber \\
	%	& \qquad\qquad\quad		
	%	+  \int_{\Ga[X]} ( |\nb_{\Ga[X]} H|^2 \nu ) \cdot \phin \nonumber \\
	%	%		& \qquad\qquad\quad		
	%	%		+ 2 \int_{\Ga[X]} \!\!\! (A\nbg H) \cdot \nbg(\n \cdot \phin) 
	%	%		- 2 \int_{\Ga[X]} \!\!\! (\n \cdot \phin) H \,(\nu \cdot A\nbg H) % kb: this term is zero
	%	& \qquad\qquad\quad		
	%	+ 2 \int_{\Ga[X]} \!\!\! (A\nb_{\Ga[X]} H) \cdot (\nb_{\Ga[X]} \phin \n ) \nonumber \\
	%	& \qquad\qquad\quad		
	%	+  \int_{\Ga[X]} \!\!\! \Q \,\nb_{\Ga[X]} \cdot \phin - \int_{\Ga[X]} \!\!\! \Q H \,\nu \cdot \phin, \\
	% normal vector \nu
	% (b)
	\label{eq:weak form - nu}
	&	 \int_{\Ga[X]} \!\!\! \mat \n \cdot \phin - \int_{\Ga[X]} \!\!\! \nb_{\Ga[X]} z \cdot \nb_{\Ga[X]} \phin 
	=  - \int_{\Ga[X]} \! A^2 z \cdot \phin 
	\nonumber \\
	& \qquad\qquad\quad		
	+  \int_{\Ga[X]} \! (H A + A^2) \nb_{\Ga[X]} H \cdot \phin \nonumber \\
	& \qquad\qquad\quad		
	+  \int_{\Ga[X]} ( |\nb_{\Ga[X]} H|^2 \nu ) \cdot \phin \nonumber \\
	%		& \qquad\qquad\quad		
	%		+ 2 \int_{\Ga[X]} \!\!\! (A\nbg H) \cdot \nbg(\n \cdot \phin) 
	%		- 2 \int_{\Ga[X]} \!\!\! (\n \cdot \phin) H \,(\nu \cdot A\nbg H) % kb: this term is zero
	& \qquad\qquad\quad		
	+ 2 \int_{\Ga[X]} \!\!\! (A\nb_{\Ga[X]} H) \cdot (\nb_{\Ga[X]} \phin \n ) \nonumber \\
	& \qquad\qquad\quad		
	+  \int_{\Ga[X]} \!\!\! \Q \,\nb_{\Ga[X]} \cdot \phin - \int_{\Ga[X]} \!\!\! \Q H \,\nu \cdot \phin, \\
	% auxiliary variable z
	\label{eq:weak form - z}
	& \int_{\Ga[X]} \!\!\! z \cdot \phiwn + \int_{\Ga[X]} \!\!\! \nb_{\Ga[X]} \n \cdot \nb_{\Ga[X]} \phiwn = \int_{\Ga[X]} \!\!\! |A|^2 \n \cdot \phiwn ,
\end{align}
\end{subequations}
for all test functions $\phiH \in H^1(\Ga[X])$, $\phiw \in H^1(\Ga[X])$, and $\phin \in H^1(\Ga[X])^3$, $\phiwn \in H^1(\Ga[X])^3$. Here,
we use the Sobolev space 
$H^1(\Ga)=\{ u \in L^2(\Gamma)\,:\, \nb_\Gamma u \in L^2(\Gamma) \}$. The system \eqref{eq:weak form} and \eqref{eq:PositionVelocity} is complemented with the initial data $X^0$, $\n^0$ and $H^0$.

\section{Full discretization by evolving finite elements and linearly implicit backward difference formulae} \label{Sec:Full Discretization}

\subsection{Evolving surface finite elements}

{\it (The text of this preparatory section is taken almost verbatim from \cite[Section~3]{KovacsLiLubich2021}.)} We formulate the evolving surface finite element (ESFEM) discretization for the velocity law coupled with evolution equations on the evolving surface, following the description in \cite{KovacsLiLubichPower2017,KovacsLiLubich2019}, which is based on \cite{Dziuk1988} and \cite{Demlow2009}. We use triangular finite elements on the surface and continuous piecewise polynomial basis functions of degree~$k$, as defined in \cite[Section 2.5]{Demlow2009}.

We triangulate the given smooth initial surface $\Gamma^0$ by an admissible family of triangulations $\mathcal{T}_h$ of decreasing maximal element diameter $h$; see \cite{DziukElliott2013} for the notion of an admissible triangulation, which includes quasi-uniformity and shape regularity. 
For a given triangulation of the initial surface $\Gamma^0$, 
%For a momentarily fixed $h$, 
we denote by $\bfx^0 $  the vector in $\R^{3\dof}$ that collects all nodes $p_j$ $(j=1,\dots,\dof)$ of the initial triangulation. By piecewise polynomial interpolation of degree $k$, the nodal vector defines an approximate surface $\Gamma_h^0$ that interpolates $\Gamma^0$ in the nodes $p_j$. We will evolve the $j$th node in time, denoted $x_j(t)$ with initial condition $x_j(0)=p_j$, and collect the nodes at time $t$ in a column vector% in $\R^{3\dof}$,
$$
\bfx(t) \in \R^{3\dof}. %= (x_1(t),\dots,x_\dof(t)) ^\top 
$$
We just write $\bfx$ for $\bfx(t)$ when the dependence on $t$ is not important.

By piecewise polynomial interpolation on the  plane reference triangle that corresponds to every
curved triangle of the triangulation, the nodal vector $\bfx$ defines a closed surface denoted by $\Gamma_h[\bfx]$. We can then define globally continuous finite element {\it basis functions}
$$
\phi_i[\bfx] \colon \Gamma_h[\bfx]\to\R, \qquad i=1,\dotsc,\dof,
$$
which have the property that on every triangle their pullback to the reference triangle are polynomials of degree $k$, and which satisfy at the node $x_j$
\begin{equation*}
\phi_i[\bfx](x_j) = \delta_{ij} \quad  \text{ for all } i,j = 1,  \dotsc, \dof .
\end{equation*}
These functions span the finite element space on $\Gamma_h[\bfx]$,
\begin{equation*}
S_h[\bfx] = S_h(\Gamma_h[\bfx])=\spn\big\{ \phi_1[\bfx], \phi_2[\bfx], \dotsc, \phi_\dof[\bfx] \big\} .
\end{equation*}
For a finite element function $u_h\in S_h[\bfx]$, the tangential gradient $\nabla_{\Gamma_h[\bfx]}u_h$ is defined piecewise on each element.

The discrete surface at time $t$ is parametrized by the initial discrete surface via the map $X_h(\cdot,t):\Gamma_h^0\to\Gamma_h[\bfx(t)]$ defined by
$$
X_h(p_h,t) = \sum_{j=1}^\dof x_j(t) \, \phi_j[\bfx(0)](p_h), \qquad p_h \in \Gamma_h^0,
$$
which has the properties that $X_h(p_j,t)=x_j(t)$ for $j=1,\dots,\dof$, that  $X_h(p_h,0) = p_h$ for all $p_h\in\Gamma_h^0$, and
$$
\Gamma_h[\bfx(t)]=\Gamma[X_h(\cdot,t)],
$$
where the right-hand side equals $\{ X_h(p_h,t) \,:\, p_h \in \Ga_h^0 \}$ like in Section~\ref{subsection: basic notions}.

The {\it discrete velocity} $v_h(x,t)\in\R^3$ at a point $x=X_h(p_h,t) \in \Gamma[X_h(\cdot,t)]$ is given by
$$
\partial_t X_h(p_h,t) = v_h(X_h(p_h,t),t).
$$
In view of the transport property of the basis functions  \cite{DziukElliott2013},
$$
\frac\D{\D t} \Bigl( \phi_j[\bfx(t)](X_h(p_h,t)) \Bigr) =0 ,
$$
%which by integration from $0$ to $t$ yields
%$$
%	\phi_j[\bfx(t)](X_h(p_h,t)) = \phi_j[\bfx(0)](p_h).
%$$
the discrete velocity equals, for $x \in \Gamma_h[\bfx(t)]$,
$$
v_h(x,t) = \sum_{j=1}^\dof v_j(t) \, \phi_j[\bfx(t)](x) \qquad \hbox{with } \ v_j(t)=\dot x_j(t),
$$
where the dot denotes the time derivative $\D/\D t$. 
Hence, the discrete velocity $v_h(\cdot,t)$ is in the finite element space $S_h[\bfx(t)]$, with nodal vector $\bfv(t)=\dot\bfx(t)$.
%Although it would be possible to define the basis functions using the formula $\phi_j[\bfx\t](X_h(p_h,t)) = \phi_j[\bfx(0)](p_h)$ (obtained by integrating the transport property), such a formulation would involve pullbacks, which are avoided here.
%
%??? $X_h$ ist ja mittels der Basisfunktionen definiert. Ich wuerde das weglassen. ???
%

The {\it discrete material derivative} of a finite element function $u_h(x,t)$ with nodal values $u_j(t)$ is
$$
\mat_h u_h(x,t) = \frac{\D}{\D t} u_h(X_h(p_h,t)) = \sum_{j=1}^\dof \dot u_j(t)  \phi_j[\bfx(t)](x)  \quad\text{at}\quad x=X_h(p_h,t).
$$

%[kb: Would be nice to shorten up a little.] 

%We will determine an approximate normal vector $\n_h$ and an approximate mean curvature $H_h$ as finite element functions: for $x \in \Gamma_h[\bfx(t)]$,
%\begin{align*}
%	\n_h(x,t) &= \sum_{j=1}^\dof \n_j(t) \, \phi_j[\bfx(t)](x) , \\
%	H_h(x,t) &= \sum_{j=1}^\dof H_j(t) \, \phi_j[\bfx(t)](x).
%\end{align*}
%Note that these finite element functions are {\it not} the normal vector and the mean curvature of the discrete surface $\Gamma_h[\bfx(t)]$. 
%For a curved triangle $E_h\subset\Gamma_h[\bfx]$, let $\varphi_{_{E_h}}: E\rightarrow E_h$ denote the one-to-one and onto map (a polynomial of degree $k$) defined on the reference triangle $\widetilde E$. 

\subsection{ESFEM spatial semi-discretization}
\label{subsection:semi-discretization}
A {\it preliminary} finite element spatial semi-discretization of the second-order parabolic coupled system \eqref{eq:weak form} together with
the velocity and position equations \eqref{eq:PositionVelocity}
reads as follows: Find the unknown nodal vector $\bfx(t)\in \R^{3\dof}$ of the finite element surface parametrization $X_h(\cdot,t)\in S_h[\bfx^0]^3$ and the unknown finite element velocity $v_h(\cdot,t)\in S_h[\bfx(t)]^3$, and the finite element functions $H_h(\cdot,t)\in S_h[\bfx(t)]$, $\V_h(\cdot,t)\in S_h[\bfx(t)]$, and $\n_h(\cdot,t)\in S_h[\bfx(t)]^3$, $z_h(\cdot,t)\in S_h[\bfx(t)]^3$ such that
\begin{subequations}
\label{eq:xh-vh}
\begin{align}
	\label{eq:xh}
	\partial_t X_h(p_h,t) = v_h(X_h(p_h,t),t), \qquad p_h\in\Ga_h^0,
\end{align}
with
\begin{align}\label{eq:vh}
	v_h = \widetilde I_h (V_h \n_h),
\end{align}
\end{subequations}
where $\widetilde I_h=\widetilde I_h[\bfx]:C(\Ga_h[\bfx])\to S_h(\Ga_h[\bfx])$ denotes the  finite element interpolation operator on the discrete surface $\Ga_h[\bfx]$.

The functions $H_h$, $V_h$, $\n_h$, $z_h$ are determined by the ESFEM semi-discretization of \eqref{eq:weak form},
denoting by $A_h = \frac12 (\nb_{\Ga_h[\bfx]} \n_h + (\nb_{\Ga_h[\bfx]} \n_h)^\top)$ the symmetric part of $\nb_{\Ga_h[\bfx]} \n_h$, 
%by $K_h = \half ( H_h^2 - |A_h|^2 )$ the approximate Gaussian curvature 
and by $\Qh = -\tfrac12 H_h^3 + |A_h|^2 H_h$ the cubic term, 
\begin{subequations}
\label{eq:semidiscrete weak form}
\begin{align}
	%		% velocity v_h
	%		& \int_{\Ga_h[\bfx]} \!\!\!\! \nb_{\Ga_h[\bfx]} v_h \cdot  \nb_{\Ga_h[\bfx]} \phiv_h + 
	%		\int_{\Ga_h[\bfx]} \!\!\!\! v_h \cdot \phiv_h 
	%		\nonumber \\ &
	%		= \int_{\Ga_h[\bfx]} \!\!\!\! \nb_{\Ga_h[\bfx]}( \V_h \n_h) \cdot \nb_{\Ga_h[\bfx]}\phiv_h + 
	%		\int_{\Ga_h[\bfx]} \!\!\!\! \V_h \n_h \cdot \phiv_h , \quad \\[3mm]
	% mean curvature H_h
	&	 \int_{\Ga_h[\bfx]} \!\!\!\! \mat_h H_h \, \phiH_h - \int_{\Ga_h[\bfx]} \!\!\!\! \nb_{\Ga_h[\bfx]} \V_h \cdot  \nb_{\Ga_h[\bfx]} \phiH_h  = - \int_{\Ga_h[\bfx]} \!\!\!\! | A_h |^2 \,  \V_h \, \phiH_h , \\[8pt]
	% normal velocity V_h
	\label{Vh-prelim}
	& \int_{\Ga_h[\bfx]} \!\!\!\! \V_h \phiw_h + \int_{\Ga_h[\bfx]} \!\!\!\! \nb_{\Ga_h[\bfx]} H_h \cdot \nb_{\Ga_h[\bfx]} \phiw_h =  \int_{\Ga_h[\bfx]} \!\!\! \Qh \phiw_h ;
	\\[16pt]
	% normal vector \nu_h 
	% (a)
	%	\label{nuh-prelim}
	%	&	 \int_{\Ga_h[\bfx]} \!\!\!\! \mat_h \n_h \cdot \phin_h 
	%	- \int_{\Ga_h[\bfx]} \!\!\!\!  \nb_{\Ga_h[\bfx]} z_h \cdot \nb_{\Ga_h[\bfx]} \phin_h  
	%	= \int_{\Ga_h[\bfx]} \!  (H_h A_h - 2A_h^2)\nb_{\Ga_h[\bfx]} H_h \cdot \phin_h \nonumber \\
	%	&\hspace{72pt}  		
	%	+  \int_{\Ga_h[\bfx]} \big( |\nb_{\Ga_h[\bfx]} H_h|^2 \nu_h \big) \cdot \phin_h \nonumber \\
	%	& \hspace{72pt}  		
	%	+ 2 \int_{\Ga_h[\bfx]} \!\!\! (A_h\nb_{\Ga_h[\bfx]} H_h) \cdot (\nb_{\Ga_h[\bfx]} \phin_h\, \n_h ) \nonumber \\
	%	&\hspace{72pt}  
	%	+ \int_{\Ga_h[\bfx]} \!\!\!  \Qh \,\nb_{\Ga_h[\bfx]} \cdot \phin_h - \int_{\Ga_h[\bfx]} \!\!\! \Qh H_h \, \nu_h\cdot \phin_h, \\[8pt]
	% normal vector \nu_h 
	% (b)
	\label{nuh-prelim}
	&	 \int_{\Ga_h[\bfx]} \!\!\!\! \mat_h \n_h \cdot \phin_h 
	- \int_{\Ga_h[\bfx]} \!\!\!\!  \nb_{\Ga_h[\bfx]} z_h \cdot \nb_{\Ga_h[\bfx]} \phin_h  
	= - \int_{\Ga_h[\bfx]} \! A_h^2 z_h \cdot \phin_h \nonumber \\
	&\hspace{72pt} 
	\int_{\Ga_h[\bfx]} \!  (H_h A_h + A_h^2)\nb_{\Ga_h[\bfx]} H_h \cdot \phin_h \nonumber \\
	&\hspace{72pt} 
	+  \int_{\Ga_h[\bfx]} \big( |\nb_{\Ga_h[\bfx]} H_h|^2 \nu_h \big) \cdot \phin_h \nonumber \\
	& \hspace{72pt}  		
	+ 2 \int_{\Ga_h[\bfx]} \!\!\! (A_h\nb_{\Ga_h[\bfx]} H_h) \cdot (\nb_{\Ga_h[\bfx]} \phin_h\, \n_h ) \nonumber \\
	&\hspace{72pt}  
	+ \int_{\Ga_h[\bfx]} \!\!\!  \Qh \,\nb_{\Ga_h[\bfx]} \cdot \phin_h - \int_{\Ga_h[\bfx]} \!\!\! \Qh H_h \, \nu_h\cdot \phin_h, \\[8pt]
	% auxiliary variable z_h
	\label{zh-prelim}
	& \int_{\Ga_h[\bfx]} \!\!\!\! z_h \cdot \phiwn_h + \int_{\Ga_h[\bfx]} \!\!\!\! \nb_{\Ga_h[\bfx]} \n_h \cdot \nb_{\Ga_h[\bfx]} \phiwn_h = \int_{\Ga_h[\bfx]} \!\!\!\! | A_h |^2 \n_h \cdot \phiwn_h ,
\end{align}
\end{subequations}
for all $\phiH_h \in S_h[\bfx(t)]$, $\phiw_h \in S_h[\bfx(t)]$, $\phin_h \in S_h[\bfx(t)]^3$, and $\phiwn_h \in S_h[\bfx(t)]^3$. 

The initial values for the nodal vector $\bfx$ are taken as the positions of the nodes of the triangulation of the given initial surface $\Gamma^0$.
The initial data for $H_h$ and $\n_h$ are determined by Lagrange interpolation of $H^0$ and $\n^0$, respectively. 

We note that the finite element functions $\n_h$ and $H_h$ are {\it not} the normal vector and mean curvature of the discrete surface $\Ga_h[\bfx]$.

\subsection{Matrix--vector formulation}
\label{subsection:DAE}
We collect the nodal values of $v_h \in S_h[\bfx(t)]^3$, $H_h \in S_h[\bfx(t)]$, $\n_h \in S_h[\bfx(t)]^3$, $V_h \in S_h[\bfx(t)]$, and $z_h \in S_h[\bfx(t)]^3$ in column vectors  $\bfv=(v_j) \in \R^{3N}$, $\bfH=(H_j)\in\R^\dof$, $\bfn=(\n_j) \in \R^{3N}$, $\boldsymbol{\V} = ({\V}_j) \in \R^\dof$, and $\bfz = (z_j) \in \R^{3N}$, respectively, and denote 
\begin{equation*}
\bfu=\left(\begin{array}{c}
	\bfH\\ 
	\bfn
\end{array}\right) \in \R^{4N} \andquad
\bfw=\left(\begin{array}{c}
	\boldsymbol{\V}\\ 
	\bfz
\end{array}\right) \in \R^{4N} .
\end{equation*}

We define the surface-dependent mass matrix $\bfM(\bfx) \in \R^{N \times N}$ and stiffness matrix $\bfA(\bfx) \in \R^{N \times N}$ %(cf.~\cite[Section~2.5]{KovacsLiLubichPower2017}) 
on the surface determined by the nodal vector $\bfx$:
\begin{equation*}
\begin{aligned}
	\bfM(\bfx)\vert_{ij} =&\ \int_{\Ga_h[\bfx]} \! \phi_i[\bfx] \phi_j[\bfx] , \\
	\bfA(\bfx)\vert_{ij} =&\ \int_{\Ga_h[\bfx]} \! \nb_{\Ga_h[\bfx]} \phi_i[\bfx] \cdot \nb_{\Ga_h[\bfx]} \phi_j[\bfx] , 
\end{aligned}
\qquad i,j = 1,  \dotsc,\dof ,
\end{equation*}
with the finite element nodal basis functions $\phi_j[\bfx] \in S_h[\bfx]$.
We further let, for $d=3$ or $4$ (with the identity matrices $I_d \in \R^{d \times d}$) 
$$
\bbM^{[d]}(\bfx)= I_d \otimes \bfM(\bfx), \quad
\bbA^{[d]}(\bfx)= I_d \otimes \bfA(\bfx).
%	\quad
%	\bfK^{[d]}(\bfx) = I_d \otimes \bigl( \bfM(\bfx) + \bfA(\bfx) \bigr).
$$
When no confusion can arise, we write $\bfM(\bfx)$ for $\bfM^{[d]}(\bfx)$ and $\bfA(\bfx)$ for $\bfA^{[d]}(\bfx)$.

We recall that we denote by $|A_h|^2$ the squared Frobenius norm of $A_h = \tfrac12 \big(\nb_{\Ga_h[\bfx]} \n_h + (\nb_{\Ga_h[\bfx]} \n_h)^\top\big)$. We define the surface- and $| A_h |^2$-dependent matrix $\bfF_1(\bfx,\bfu) \in \R^{N \times N}$ by
\begin{equation}
\label{eq:alpha mass matrix - F_1}
\bfF_1(\bfx,\bfu)\vert_{ij} = \int_{\Ga_h[\bfx]} \! | A_h |^2 \,\phi_i[\bfx] \phi_j[\bfx] , 
\qquad i,j = 1,  \dotsc,\dof .
\end{equation}
and similarly
\begin{equation}
\label{eq:alpha mass matrix - F_2}
\bfF_2(\bfx,\bfu)\vert_{ij} = \int_{\Ga_h[\bfx]} \! A_h^2 \,\phi_i[\bfx] \cdot \phi_j[\bfx] , 
\qquad i,j = 1,  \dotsc,3\dof .
\end{equation}
We then define the block diagonal matrix
\begin{equation}
\label{eq:bfF definition}
\bfF(\bfx,\bfu) = 
% (a)
%	\left(\begin{array}{cc}
	%		\bfF_1(\bfx,\bfu) & 0\\ 
	%		0 &   0 
	%	\end{array}\right) 
% (b)
\left(\begin{array}{cc}
	\bfF_1(\bfx,\bfu) & 0\\ 
	0 & \bfF_2(\bfx,\bfu) 
\end{array}\right) 
\in \R^{4N \times 4N} .
\end{equation}

We define the non-linear functions $\bff(\bfx,\bfu), \bfg(\bfx,\bfu) \in\R^{4N}$ by
\begin{equation*}
\bff(\bfx,\bfu) =
\left(\begin{array}{c}
	0  \\ 
	\bff_{2}(\bfx,\bfu)  
\end{array}\right) \andquad
\bfg(\bfx,\bfu) =
\left(\begin{array}{c}
	\bfg_{1}(\bfx,\bfu) \\ 
	\bfg_{2}(\bfx,\bfu)  
\end{array}\right) ,
\end{equation*}
with $\bff_{2}(\bfx,\bfu)\in \R^{3N}$, $\bfg_{1}(\bfx,\bfu)\in \R^{N}$ and  $\bfg_{2}(\bfx,\bfu)\in \R^{3N}$, given by  
\begin{equation*}
\begin{aligned}
	\bff_{2}(\bfx,\bfu)\vert_{j+(\ell - 1)N} \: &= 
	\int_{\Ga_h[\bfx]} \big( |\nb_{\Ga_h[\bfx]} H_h|^2 \nu_h + (H_h A_h + A_h^2) \nb_{\Ga_h[\bfx]} H_h \big)_\ell \, \phi_j[\bfx] 
	\nonumber \\
	& \hspace{12pt}  		
	+ 2 \int_{\Ga_h[\bfx]} \!\!\! (A_h\nb_{\Ga_h[\bfx]} H_h) \cdot (\nb_{\Ga_h[\bfx]} \phi_j[\bfx]\, (\n_h )_\ell )
	\\
	& \hspace{12pt} 
	+\int_{\Gamma_h[\bfx]} \!\! \Qh \, (\nb_{\Ga_h[\bfx]})_\ell \phi_j[\bfx] 
	%		\\
	%		 &\quad\,
	-\int_{\Ga_h[\bfx]} \!\!\! \Qh H_h \, (\nu_h)_\ell \, \phi_j[\bfx] , \\
	\bfg_{1}(\bfx,\bfu)\vert_j \qquad\quad\ &=  \int_{\Gamma_h[\bfx]}\!\! \Qh \, \phi_j[\bfx] ,
	\\
	\bfg_{2}(\bfx,\bfu)\vert_{j+(\ell-1)N} &=\int_{\Gamma_h[\bfx]}\!\! | A_h |^2 \,(\n_h)_\ell \,  \, \phi_j[\bfx] ,
	\\
	%		\bfg(\bfx,\bfu,\bfw)\vert_{j+(\ell-1)N} &= -\int_{\Gamma_h[\bfx]} \!\!\!\!\!   \V_h  (\n_h  )_\ell \, \phi_j[\bfx]
	%		-\int_{\Gamma_h[\bfx]} \!\!\!\!\!  \nb_{\Ga_h[\bfx]} (\V_h  (\n_h  )_\ell)\cdot  \nb_{\Ga_h[\bfx]}\phi_j[\bfx] ,
\end{aligned}
%	\qquad (j = 1,  \dotsc, N,\ \ell=1,2,3) .
\end{equation*}
for $j = 1, \dotsc, N$ and $\ell=1,2,3$.

%, and $\bfK(\bfx)$ for $\bfK^{[d]}(\bfx)$.
%, and
%$\| \cdot \|_{H^1(\Gamma)}$ for $\| \cdot \|_{H^1(\Gamma)^4}$, etc.

The position and velocity equations \eqref{eq:xh-vh} are equivalent to 
\begin{subequations}
\label{eq:matrix-form-X-v}
\begin{align}
	\label{eq:matrix-form-X}
	\dot\bfx &= \bfv,
	\\
	\label{eq:matrix-form-v}
	\bfv &= \bfV \bullet \bfn,
\end{align}
\end{subequations}
where $\bullet$ denotes the componentwise product of vectors, $\bfV \bullet \bfn = (V_j\n_j)$.
With the notation introduced above, the equations \eqref{eq:semidiscrete weak form} can be written in the following matrix--vector form, where we recall that $\bfu = (\bfH;\bfn)$ and $\bfw = (\bfV;\bfz)$: 
\begin{subequations}
\label{eq:matrix--vector form}
\begin{align}
	%		\label{eq:matrix-form-v}
	%		\bfK^{[3]}(\bfx)\bfv = &\ \bfg(\bfx,\bfu,\bfw) , 
	%		\\
	\label{eq:matrix-form-u}
	\bfM^{[4]}(\bfx)\dot\bfu - \bfA^{[4]}(\bfx)\bfw = &\ -\bfF(\bfx,\bfu)\bfw + \bff(\bfx,\bfu) , \\
	\label{eq:matrix-form-w}
	\bfM^{[4]}(\bfx)\bfw + \bfA^{[4]}(\bfx)\bfu = &\ \bfg(\bfx,\bfu) .
\end{align}
\end{subequations}

% (a)
%\begin{rem}
%\label{rem:zHChange}
%	The matrix--vector formulation of \eqref{eq:Willmore evolution equations - system - original}, see Section~3.3 in \cite{KovacsLiLubich2021}, is formally identical to \eqref{eq:matrix--vector form}. The corresponding $\bfF$ matrix however reads (with $\bfF_1$ from above):
%	\begin{align*}
%		&\ \bfF^{\textnormal{old}}(\bfx,\bfu) = \left(\begin{array}{cc}
	%			- \bfF_1(\bfx,\bfu) & 0\\ 
	%			0 &   \bfF_2^{\textnormal{old}}(\bfx,\bfu) 
	%		\end{array}\right) \in \R^{4N \times 4N} \\
%		&\ \text{with} \quad
%		\bfF_2^{\textnormal{old}}(\bfx,\bfu)\vert_{ij} = \int_{\Ga_h[\bfx]} \! (H_h A_h - A_h^2) \,\phi_i[\bfx] \cdot \phi_j[\bfx] ,
%	\end{align*}
%	compared to \eqref{eq:bfF definition}:
%	\begin{equation*}
%		\bfF(\bfx,\bfu) = 
%		\left(\begin{array}{cc}
	%			\bfF_1(\bfx,\bfu) & 0\\ 
	%			0 &   0 
	%		\end{array}\right) \in \R^{4N \times 4N}
%	\end{equation*}
%	
%	Note that the original matrix $\bfF^{\textnormal{old}}(\bfx,\bfu)$ is indefinite due to lower-right block. 
%	The new block matrix $\bfF(\bfx,\bfu)$ is positive semi-definite, since $|A_h|^2 \geq 0$.
%	By the change made in the evolution equations \eqref{eq:Willmore evolution equations - system}, compared to \eqref{eq:Willmore evolution equations - system - original}, the matrix $\bfF(\bfx,\bfu)$ now becomes non-negative definite. This will be an essential property used in the stability proof.
%\end{rem}
% (b)
\begin{rem}
\label{rem:zHChange}
The matrix--vector formulation of \eqref{eq:Willmore evolution equations - system - original}, see Section~3.3 in \cite{KovacsLiLubich2021}, is formally identical to \eqref{eq:matrix--vector form}. 

The first block of the matrix $\bfF$ is identical (up to a change in sign convention):
\begin{align*}
	\bfF_1^{\textnormal{old}}(\bfx,\bfu) = - \bfF_1(\bfx,\bfu) .
\end{align*}
The second block of the matrix $\bfF$ however changes substantially:
\begin{align*}
	\bfF_2^{\textnormal{old}}(\bfx,\bfu)\vert_{ij} = &\ \int_{\Ga_h[\bfx]} \! (H_h A_h - A_h^2) \,\phi_i[\bfx] \cdot \phi_j[\bfx] , \\
	\bfF_2(\bfx,\bfu)\vert_{ij} = &\ \int_{\Ga_h[\bfx]} \! A_h^2 \,\phi_i[\bfx] \cdot \phi_j[\bfx] .
\end{align*}

Note that the original block matrix $\bfF^{\textnormal{old}}(\bfx,\bfu)$ is indefinite due to the lower-right block. By the change made in the evolution equations \eqref{eq:Willmore evolution equations - system}, compared to \eqref{eq:Willmore evolution equations - system - original}, the matrix $\bfF(\bfx,\bfu)$ now becomes positive semi-definite, since $|A_h|^2 \geq 0$ and $A_h^2$ has non-negative eigenvalues $\kappa_{1,h}^2, \kappa_{2,h}^2 \geq 0$. This will be an essential property used in the stability analysis.
\end{rem}

\subsection{Lifts}
\label{subsec:lifts}

%{\it (The text of this preparatory section is taken almost verbatim from \cite[Section~3]{KovacsLiLubich2021}.)} 

We need to compare functions on the {\it exact surface} $\Gamma(t)=\Gamma[X(\cdot,t)]$ with functions on the {\it discrete surface} $\Gamma_h(t)=\Gamma_h[\bfx(t)]$, as well as on the {\it interpolated surface} $\Gamma_h^*(t)=\Gamma_h[\xs(t)]$ with $\xs(t)|_j=X(p_j,t)$.%, where $\xs(t)$ denotes the nodal vector collecting the grid points $x_j^*(t)=X(p_j,t)$ on the exact surface.

Following \cite[Section~3.5]{KovacsLiLubich2021} we will use two different lift operations: For any finite element function $w_h:\Gamma_h(t)\to\R^m$ ($m=1$ or 3), with nodal vector $\bfw \in \R^{mN}$, we associate the finite element function on the interpolated surface $\Gamma_h^*(t)$: 
$$
\widehat w_h  = \sum_{j=1}^N \bfw_j \phi_j[\xs(t)] \in S_h[\xs]^m .
$$ 

%This can be further lifted to a function on the exact surface $\Ga(t)$ by using the \emph{lift operator} $^\ell$, which was introduced for linear and higher-order surface approximations in \cite{Dziuk1988} and \cite{Demlow2009}, respectively. 

%The lift operator $^\ell$ maps a function on the interpolated surface $\Gamma_h^*$ to a function on the exact surface $\Gamma$, provided that $\Gamma_h^*$ is sufficiently close to $\Gamma$.
%The exact regular surface $\Gamma$ can be represented,  in some neighbourhood of the surface,  by a smooth signed distance function $d \colon \R^3 \times [0,T] \to \R$, cf.~\cite[Section~2.1]{DziukElliott2013}, such that $\Gamma$ is the zero level set of~$d$  (i.e., $x\in \Gamma$ if and only if $d(x)=0$), with negative values of $d$ in the interior.  
This can be further lifted to a function on the smooth exact surface $\Ga(t)$ by using the \emph{lift operator} $^\ell$, see \cite{Dziuk1988} and \cite{Demlow2009}. Using the distance function representation of $\Ga(t)$ (i.e., $d \colon \R^3 \times [0,T] \to \R$ such that $\Ga(t) = \{x \in \R^3 \,:\, d(x,t) = 0\}$) and the uniquely defined closest point projection ($y = x - \nu(y) d(x)$), the lift of a continuous function $\eta_h \colon \Ga_h^* \to \R^m$ is defined as
$\eta_{h}^{\ell}(y) := \eta_h(x)$ for $x\in\Ga_h^*$, for every $x\in \Ga_h^*$ with projection $y=y(x)\in\Ga$.

%where for every $x\in \Ga_h^*$ the point $y=y(x)\in\Ga$ is uniquely defined via
%$%\begin{equation*}
%%\label{eq: lift defining equation}
%y = x - \nu(y) d(x).
%$%\end{equation*}

The composed lift $^L$ from finite element functions on the discrete surface $\Gamma_h(t)$ to functions on the exact surface $\Gamma(t)$ via the interpolated surface $\Gamma_h^*(t)$ is denoted by 
$$
w_h^L = (\widehat w_h)^\ell.
$$

\subsection{Linearly implicit full discretization}

Let $\tau>0$ be the time step size, and $q\tau \leq t_n:=n\tau \leq T$ be a uniform partition of the time interval $[0,T]$. We assume the starting values $\bfu^0,\dots, \bfu^{q-1} \in \R^{4N}$ and $\bfx^0,\dots,\bfx^{q-1} \in \R^{3N}$ to be given. We discretize in time by the linearly implicit backward differentiation formulae (BDF methods), where the discretized time derivative and the extrapolation for the non-linear terms are, respectively, given by 
\begin{align}
\partial_q^\tau \bfw^n =\mathbf{\dot w}^n := \frac1\tau \sum_{j=0}^q \delta_j \bfw^{n-j}, \andquad \widetilde \bfw^n := \sum_{j=0}^{q-1} \gamma_j \bfu^{n-1-j}, \forquad n\geq q,
\end{align}
for a vector $\bfw$, where the coefficients are given by the expressions
\begin{subequations}\label{eq:BdfCoeff}
\begin{align}
	\label{eq:BDF generating function}
	\delta(\zeta)=&\sum_{j=0}^q \delta_j \zeta^j=\sum_{l=1}^q \frac{1}{l} (1-\zeta)^l, \qquad
	\gamma(\zeta)=&\sum_{j=0}^{q-1}\gamma_j \zeta^j=\frac{(1-(1-\zeta)^q)}{\zeta}.
\end{align}
\end{subequations}

The $q$-step BDF methods are of order $q$, they are 

\begin{center}
\begin{minipage}{0.742\linewidth}
	\begin{itemize}
		\item[--] $A$-stable for $q=1,2$,
		\item[--] $A(\alpha_q)$-stable for $q=3,\dotsc,6$,
	\end{itemize}
\end{minipage}
\end{center}

with $\alpha_3 =86.03^\circ$, $\alpha_4=73.35^\circ$, $\alpha_5=51.84^\circ$,  and $\alpha_6 = 17.84^\circ$, respectively, and unstable for $q \geq 7$, see, e.g.,  \cite[Section~V.2]{HairerWanner1996}, \cite[Section~2.3]{AkrivisLubich2015}, while see \cite{AkrivisKatsoprinakis2020} for the exact $\alpha_q$ values. 

The starting values can be precomputed using either a lower order method with smaller time step size, or a Runge--Kutta method of suitable order.

Then the numerical scheme gives approximations $\bfu^n=(\bfH^n,\bfn^n)$, $\bfw^n=(\bfV^n,\bfz^n)$, $\bfx^n$ and $\bfv^n$ by the fully discrete system of linear equations
\begin{subequations} \label{eq:NumericalScheme4order}
\begin{align}
	\bfM^{[4]}(\bftx^n) \bfdu^n - \bfA^{[4]} (\bftx^n) \bfw^n =&\ -\bfF(\bftx^n,\bftu^n) \bftw^n + \bff(\bftx^n,\bftu^n), \label{eq:NumericalScheme4orderA} \\
	\bfM^{[4]}(\bftx^n) \bfw^n + \bfA^{[4]}(\bftx^n) \bfu^n =&\ \bfg(\bftx^n,\bftu^n). \label{eq:NumericalScheme4orderB}
\end{align}
\end{subequations}
with the equations for position and velocity,
\begin{subequations} \label{eq:NumericalSchemeODE}
\begin{align}
	\bfdx^n =&\ \bfv^n, \\
	\bfv^n =&\ \bfV^n \bullet \bfn^n.
\end{align}
\end{subequations}

From nodal vectors, e.g., $\bfx^n=(x_j^n)$, we obtain approximations of the corresponding geometric variables, $X(\cdot,t_n) \approx X_h^n(\cdot) = \sum_{j=1}^N x_j^n \phi_j[\bfx(0)]$, and similarly for other variables.

%From the vectors $\bfx^n=(x_j^n)$ and $\bfv^n=(v_j^n)$ we obtain position and velocity approximations to $X(\cdot,t_n), \text{id}_{\Ga[X(\cdot,t_n)]},v(\cdot,t_n)$ as
%\begin{align}
%X_h^n(p_h) =&\ \sum_{j=1}^N x_j^n \phi_j[\bfx(0)](p_h) \forquad p_h \in \Ga_h^0, \\
%x_h^n =&\ \text{id}_{\Ga[X_h^n]}, \\
%v_h^n(x) =&\ \sum_{j=1}^N v_j^n \phi_j[\bfx^n](x) \forquad x \in \Ga_h[\bfx^n],
%\end{align}
%and from vectors $\bfu^n = (u_j^n)$ with $u_j^n = (H_j^n,\nu_j^n) \in \R \times \R^3$ we obtain approximations to the mean curvature and normal vector at time $t_n$ as
%\begin{align}
%\nu_h^n(x) =&\ \sum_{j=1}^N \nu_j^n \phi_j[\bfx^n](x) \forquad x \in \Ga_h[\bfx^n], \\
%H_h^n(x) =&\ \sum_{j=1}^N H_j^n \phi_j[\bfx^n](x) \forquad x \in \Ga_h[\bfx^n].
%\end{align}

\subsection{Modified numerical scheme}

In order to obtain the optimal-order of convergence with our error analysis, we need the initial values of $\bfx$, $\bfu$ and $\bfw$ to be $\calO(h^k)$ close to the Ritz projection of the exact initial values $\bfx_\ast$, $\bfu_\ast$ and $\bfw_\ast$ in the $H^1$-norm. The initial values $\bfx^i$ and $\bfu^i$ for $i=0,\dots,q-1$ are freely chosen, but since $\bfw$ is given by an algebraic equation \eqref{eq:NumericalScheme4orderB}, the initial values are not freely chosen, which makes it necessary to modify the equation, as it is done in the semi-discrete error analysis, see \cite[Section~3.4]{KovacsLiLubich2019}. The shift we introduce is analogous to the shift introduced in the fully discrete scheme of the related Cahn--Hilliard equation with dynamic boundary conditions in \cite[Section~4.1]{BullerjahnKovacs2024}: 

For $i=0,\dots,q-1$, we define $\bfd_w^i$ as the nodal vector of the semi-discrete defect $d_w(t_i)$ at time $t_i=i\tau$, see \cite[equation~(5.15b)]{KovacsLiLubich2021}. Then we define the shift as
\begin{equation} \label{eq:DefVartheta}
\begin{aligned}
	\bfvartheta^i=&\ -\bfA(\bfx_\ast^i) \bfu_\ast^i + \bfA(\bfx^i) \bfu^i + \bfg(\bfx_\ast^i,\bfu_\ast^i) - \bfg(\bfx^i,\bfu^i) + \bfM(\bfx_\ast^i) \bfd_w^i, \\
	\bfvartheta^n=&\ \bfvartheta^{q-1}, 
\end{aligned}
\end{equation}
for $i=0,\dots,q-1$ and $n\geq q$, and modify \eqref{eq:NumericalScheme4orderB} to
\begin{align}
\bfM(\bftx^n) \bfw^n + \bfA(\bftx^n) \bfu^n =&\ \bfg(\bftx^n,\bftu^n) + \bfvartheta^n. \label{eq:NumericalScheme4orderBMod}
\end{align}

If we now consider this algebraic equation in the initial time steps $i=0,\dots,q-1$, we extend the extrapolation by the exact value and obtain
\begin{align*}
\bfM(\bfx^i) \bfw^i =&\ -\bfA(\bfx^i) \bfu^i + \bfg(\bfx^i,\bfu^i) + \bfvartheta^i \\
=&\ -\bfA(\bfx_\ast^i) \bfu_\ast^i + \bfg(\bfx_\ast^i,\bfu_\ast^i) + \bfM(\bfx_\ast^i) \bfd_w^i \\
=&\ \bfM(\bfx_\ast^i) \bfw_\ast^i,
\end{align*}
by the choice of the initial values in $\bfx$ and $\bfu$, and by the definition of the semi-discrete defect.

\section{Fully discrete optimal-order error estimates} \label{Sec:MainResult}

In this section we present our main result of optimal-order convergence of the full discretization by evolving surface finite elements of polynomial degree at least $2$ and a $A$-stable backward difference formulae, i.e.~BDF methods of order $1$ and $2$, to a sufficiently regular solution of the Willmore flow of closed surfaces \eqref{eq:PositionVelocity}--\eqref{eq:weak form}.

\begin{thm} \label{Theorem:MainConvergence}
Consider the $A$-stable BDF--ESFEM full discretization \eqref{eq:NumericalScheme4order}--\eqref{eq:NumericalSchemeODE} of the Willmore flow problem \eqref{eq:PositionVelocity}--\eqref{eq:weak form} with modification \eqref{eq:NumericalScheme4orderBMod}, using evolving surface finite elements of polynomial degree $k\geq2$ and linearly implicit BDF time discretization of order $q = 1,2$. Suppose that the problem admits an exact solution $(X,v,\nu,H,z,V)$ that is sufficiently differentiable on the time interval $t \in [0,T]$, and that the flow map $X(\cdot,t) \colon \Ga^0 \to \Ga(t) \subset \R^3$ is non-degenerate so that $\Ga(t)$ is a regular surface on the time interval $t\in [0,T]$. 

Then, there exist $h_0>0$ and $\tau_0>0$ such that for all mesh sizes $h\leq h_0$ and time step sizes $\tau\leq \tau_0$ satisfying the step size restriction $\tau^q \leq C_0 h^k$ (where $C_0>0$ can be chosen arbitrarily), the following error bounds for the lifts of the discrete position, velocity, normal vector and mean curvature hold over the exact surface. Provided that the starting values are $\calO(h^k\tau^{1/2}+\tau^{q+1/2})$ accurate in the $H^1$ norm at time $t_i=i\tau$ for $i=0,\dots,q-1$, we have at time $t_n=n\tau\leq T$:
\begin{align}
	\| (x_h^n)^L - \text{id}_{\Ga(t_n)} \|_{H^1(\Ga(t_n))^3} \leq C(h^k+\tau^q), \\
	\| (v_h^n)^L - v(\cdot,t_n) \|_{H^1(\Ga(t_n))^3} \leq C(h^k+\tau^q), \\
	\| (\nu_h^n)^L - \nu(\cdot,t_n) \|_{H^1(\Ga(t_n))^3} \leq C(h^k+\tau^q), \\
	\| (H_h^n)^L - H(\cdot,t_n) \|_{H^1(\Ga(t_n))} \leq C(h^k+\tau^q), \\
	\| (z_h^n)^L - \nabla_{\Ga(t)} H(\cdot,t_n) \|_{H^1(\Ga(t_n))^3} \leq C(h^k+\tau^q), \\
	\| (V_h^n)^L - V(\cdot,t_n) \|_{H^1(\Ga(t_n))} \leq C(h^k+\tau^q),
\end{align}
and also 
\begin{align}
	\| (X_h^n)^\ell - X(\cdot,t_n) \|_{H^1(\Ga_0)^3} \leq C(h^k+\tau^q),
\end{align}
where the constant $C$ is independent of $h$, $\tau$ and $n$ with $n\tau\leq T$, but depends on bounds of higher derivatives of the solution $(X,v,\nu,H)$ of the Willmore flow and on the length $T$ of the time interval, and on $C_0$.
\end{thm}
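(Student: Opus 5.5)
The proof follows the standard route: consistency plus stability yields convergence, with the stability analysis carried out for the error equations between the numerical solution and the Ritz-projected exact solution. Let $\bfx_\ast^n$, $\bfu_\ast^n$, $\bfw_\ast^n$ denote the nodal vectors of the Ritz projections, as in \cite{KovacsLiLubich2021}, of the exact solution at $t_n$. Inserted into \eqref{eq:NumericalScheme4orderA}, \eqref{eq:NumericalScheme4orderBMod} and \eqref{eq:NumericalSchemeODE}, with $\partial_q^\tau$ and the extrapolation, they leave defects $\bfd_x^n,\bfd_u^n,\bfd_w^n$. The errors are $\bfe_x^n=\bfx^n-\bfx_\ast^n$, $\eu^n=\bfu^n-\bfu_\ast^n$ and $\ew^n=\bfw^n-\bfw_\ast^n$.

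\emph{Consistency.} Combining the semi-discrete defect bounds of \cite{KovacsLiLubich2021} with Taylor expansion of the BDF truncation and extrapolation errors (order $q$), one obtains defect bounds in the dual norms $\|\cdot\|_{\star}$ of order $\calO(h^k+\tau^q)$, and for their discrete time derivatives as well. This follows \cite{BullerjahnKovacs2024}, transferred to evolving surfaces as in \cite{KovacsLiLubich2019}. The shift $\bfvartheta$ makes the starting errors in $\bfw$ vanish. Because $\bfvartheta^n$ is frozen at $\bfvartheta^{q-1}$, the equation \eqref{eq:NumericalScheme4orderBMod} acquires an additional defect term $\bfvartheta^{q-1}-(\text{its ideal value at }t_n)$. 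This term is bounded by the assumed starting accuracy, and the $\tau^{1/2}$ factors are exactly what makes it harmless after summation.

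\emph{Stability.} Assume, within a bootstrap argument, that $\|\bfe_x^j\|_{W^{1,\infty}}$ and $\|\eu^j\|_{W^{1,\infty}},\|\ew^j\|_{W^{1,\infty}}$ are bounded by $h^{(k-1)/2}$ for $j<n$. Under this assumption the discrete surfaces $\Ga_h[\bftx^n]$ stay uniformly equivalent to $\Ga_h[\bfx_\ast^n]$, so the lemmas of \cite{KovacsLiLubichPower2017,KovacsLiLubich2019} on comparing mass and stiffness matrices on different surfaces become available. Subtracting the consistency equations from the scheme gives error equations with antisymmetric structure. I would then carry out three energy estimates, in the manner of \cite{BullerjahnKovacs2024}.
\begin{itemize}
\item[(i)] Test the first equation with $\ew^n$ and the second with $\partial_q^\tau \eu^n$. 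The $\bfA$-coupling terms cancel. Dahlquist's $G$-stability together with the Nevanlinna--Odeh multiplier then controls $\|\eu^n\|_{\bfA}^2$ and $\tau\sum\|\ew^j\|_{\bfM}^2$.
\item[(ii)] Test with $\ew^n$ and $\eu^n$ to obtain $\|\eu^n\|_{\bfM}^2$.
\item[(iii)] Take discrete time derivatives of the second error equation and test so as to gain $\tau\sum\|\ew^j\|_{\bfA}^2$.
\end{itemize}
Throughout, the term $-\bfF(\bftx^n,\bftu^n)\bftw^n$ is handled through positive semi-definiteness of $\bfF$ (Remark~\ref{rem:zHChange}). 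This is where the modified system is needed: its contribution goes to the left-hand side or is absorbable, up to extrapolation differences $\ew^n-\bftw^n$, which are bounded by discrete derivatives. The nonlinearities $\bff,\bfg$ are estimated in locally Lipschitz fashion by $\|\bfe_x\|_{\bfK}+\|\eu\|_{\bfK}$ plus higher-order terms, using the $W^{1,\infty}$ bounds.

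\emph{Main obstacle.} The hard step is the right-hand-side term in (i) in which a discrete time derivative meets time-dependent matrices, of the type $\tau\sum_j \langle \partial_q^\tau \eu^j,(\bfM(\bftx^j)-\bfM(\bfx_\ast^j))\ew^j\rangle$ and the analogous terms for $\bfA$. A direct bound loses a factor, so a summation by parts in time is required. Its remainder has to be controlled from above by the $G$-norm, i.e.\ an inequality $\sum_j\langle \sum_i\delta_i e^{j-i}, e^j\rangle_{\bfM}\le C\,(\text{G-norm terms})$ with the matrices varying in $j$, which is the reverse direction of Dahlquist's bound. I would prove it by writing $\delta(\zeta)$ explicitly for $q=1,2$ and collecting the difference terms $\bfM(\bfx^j)-\bfM(\bfx^{j-1})=\calO(\tau)$ with the help of the velocity bounds.

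\emph{Closing the argument.} The velocity law is estimated as in \cite{KovacsLiLubich2019}: $\|\bfe_v^n\|_{\bfK}\lesssim\|\eu^n\|_{\bfK}+\|\ew^n\|_{\bfK}+$ defects, and $\bfe_x$ is obtained by summing the BDF position equation. Combining all the estimates and applying a discrete Gronwall inequality gives $\|\bfe_x^n\|_{\bfK}+\|\eu^n\|_{\bfK}+\|\ew^n\|_{\bfK}\le C(h^k+\tau^q)$. An inverse estimate together with $\tau^q\le C_0h^k$ and $k\ge2$ then recovers the bootstrap bound $h^{(k-1)/2}$ for $h\le h_0$. Finally, the $H^1$ errors of the lifts are split into the Ritz projection error $\calO(h^k)$ and the lifted discrete error, using the norm equivalence under lifting. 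This yields all the bounds in Theorem~\ref{Theorem:MainConvergence}.
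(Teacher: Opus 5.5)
\textbf{Gap at the key step.} Your overall architecture matches the paper: consistency plus stability via antisymmetric energy estimates, $G$-stability, a bootstrap and Gr\"onwall. However, the step you single out as the main obstacle is not the real one, and the estimate that actually needs new work is missing from your plan.

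The term that genuinely requires a reverse Dahlquist bound is $\tau\sum_k(\bfde_w^k)^\top\bfF(\bftx^k,\bftu^k)\bfte_w^k$. It appears when the first error equation is tested with $\bfde_w^k$ and subtracted from the discretely differentiated second equation tested with $\bfde_u^k$. This estimate is indispensable for two reasons:
\begin{itemize}
\item testing with $\bfde_u^k$ in your (i) leaves $(\bfde_u^k)^\top\bfr_2^k$ on the right, so you must control $\tau\sum_k\|\bfde_u^k\|_{\bfK}^2$;
\item you need $\|\ew^n\|_{\bfK}$ pointwise in $n$, both for the $V$, $z$, $v$ bounds and for the $W^{1,\infty}$ bootstrap on $e_w$.
\end{itemize}
Neither is provided by (i)--(iii) as you describe them. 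Your (iii) targets $\tau\sum\|\ew^j\|_{\bfA}^2$, which (i) already yields; in (i) it is the mass coupling that cancels, not the $\bfA$ coupling. Conversely, the term you name, $\langle\partial_q^\tau\eu^j,(\bfM(\bftx^j)-\bfM(\bfx_\ast^j))\ew^j\rangle$, does not occur. In the error equations, matrix differences act only on exact-solution vectors. Pairings such as $(\bfde^k)^\top(\tM^k-\tM_\ast^k)\bfw_\ast^k$ are handled by ordinary summation by parts via $\delta(\zeta)=(1-\zeta)\sigma(\zeta)$, with no $G$-norm involved.

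\textbf{The proposed reverse inequality is false.} The bound $\sum_j\langle\sum_i\delta_ie^{j-i},e^j\rangle_{\bfM}\le C(\text{$G$-norm terms})$ cannot hold. For $q=1$ one has $\langle e^j-e^{j-1},e^j\rangle=\tfrac12|e^j|^2-\tfrac12|e^{j-1}|^2+\tfrac12|e^j-e^{j-1}|^2$. The sum $\tfrac12\sum_j|e^j-e^{j-1}|^2$ is not bounded by end-point terms; take $e^j=(-1)^je$.

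The missing idea is to pair the BDF difference with the \emph{extrapolation} $\gamma$, exactly as it occurs in $\bfF(\bftx^k,\bftu^k)\bfte_w^k$. For $q=1$ this gives $\langle e^j-e^{j-1},e^{j-1}\rangle=\tfrac12|e^j|^2-\tfrac12|e^{j-1}|^2-\tfrac12|e^j-e^{j-1}|^2$, and Lemma~\ref{Lemma:DahlquistUpper} gives the analogue for $q=2$. The remainder $-|\sum_ia_iw_i|^2$ is nonpositive precisely when the bilinear form is positive semi-definite.

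This is the one place where the modified system is essential. The form used must be $\bfF(\bftx^k,\bftu^k)\ge0$ from Remark~\ref{rem:zHChange}, not $\bfM$. The $k$-dependence of $\bfF(\bftx^k,\bftu^k)$ is then absorbed via \eqref{eq:oTau bound F}.

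Your claim that the $\bfF$-contribution ``goes to the left-hand side, up to $\ew^n-\bfte_w^n$'' fails for this term. First, $(\bfde_w^k)^\top\bfF\bfte_w^k$ is not a square. Second, without the sign the remainder $\sum_k|\sum_ia_i\ew^{k-i}|_{\bfF}^2$ involves discrete derivatives of $\ew$, which the energy estimates do not control in $L^2$. In the semi-discrete setting the product rule has no such remainder, which is why no modification was needed there. The same obstruction is why the result is restricted to $q\le 2$.
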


Sufficient regularity assumptions are the following: uniformly in $t \in [0,T]$,
\begin{align*}
&X(\cdot,t) \in H^{k+1}(\Ga^0), \partial_t^jX(\cdot,t) \in H^1(\Ga^0) \quad (j=1,\dots,q+1), \\
&v(\cdot,t) \in H^{k+1}(\Ga(X(\cdot,t))), \\
\text{for }u=(H,\nu),\quad &u(\cdot,t), \partial^{(j)} u(\cdot,t) \in W^{k+1,\infty}(\Ga(X(\cdot,t)))^4 \quad (j=1,\dots,q+3), \\
\text{for }w=(V,z),\quad &w(\cdot,t), \partial^{(j)} w(\cdot,t) \in W^{k+1,\infty}(\Ga(X(\cdot,t)))^4 \quad (j=1,\dots,q+2).
\end{align*}
%\NBon These conditions are safe, we need q+1 derivatives for discrete derivative, q derivatives for extrapolation and 2 additional derivatives for discrete derivative of defect terms. So q+1 on X, 0 on v, q+3 on u, q+2 on w and we only need \NBoff

%\begin{rem}
%	(i) The $h_0$ depends on geometric approximation results. The requirement on the smallness of both parameters is merely technical. 
%	
%	(ii) The step size restriction is needed to establish an $W^{1,\infty}$-bound on the error in the stability analysis. The restriction can be even weakened to $\tau^q \leq C_0 h^{3/2+\eps_0}$, where $\eps_0,C_0 > 0$ can be chosen arbitrarily, as is apparent from the assumptions in Proposition~\ref{Prop:StabilityEst}.
%\end{rem}

The error estimates are shown by clearly separating the stability and consistency analysis.

In order to prove the stability bound, we follow the general scheme of the semi-discrete stability proof in \cite[Section~5]{KovacsLiLubich2021}, and adapt crucial techniques from the fully discrete stability analysis of \cite[Section~6]{BullerjahnKovacs2024}. 
We derive two sets of energy estimates by exploiting the anti-symmetric structure of \eqref{eq:NumericalScheme4order}, see Figure~\ref{fig:EnergyEstimates}, being the time discrete counterpart of Figure~2 in \cite{KovacsLiLubich2021}. The fully discrete energy estimates rely on the $G$-stability theory of Dahlquist \cite[Theorem 3.3]{Dahlquist1978} and the multiplier technique of Nevanlinna and Odeh \cite[Section 2]{NevanlinnaOdeh1981}, in combination with a new Dahlquist-type upper bound Lemma~\ref{Lemma:DahlquistUpper}. We relate different finite element surfaces using the results from \cite{KovacsLiLubichPower2017}. This general approach has already proved to be effective for various partial differential equations, see, e.g., \cite{LubichMansourVenkataraman2013,AkrivisLubich2015,KovacsLiLubich2019,LLG,ContriKovacsMassing2023,Bullerjahn2025}.

The main new difficulty arises from the error term $(\bfde_w^n)^\top \bfF(\bftx^n,\bftu^n) \bfte_w^n$: Following the semi-discrete proof, this should be estimated by a symmetric product rule, similar to terms like $(\bfde^n)^\top \bfM(\bftx^n) \bfe^n$ appearing on the left-hand side of the estimates. Unfortunately, in the fully discrete setting there is only a lower bound available (via $G$-stability and the multiplier technique). In order to resolve this, we establish a new upper bound in the spirit of Dahlquist stability theory, see Lemma~\ref{Lemma:DahlquistUpper}, which requires the non-negativity of $\bfF$. In turn, this explains our modifications in Section~\ref{section:evolution equation system}, see also Remark~\ref{rem:zHChange}. 
%\NBon This new result, Lemma~\ref{Lemma:DahlquistUpper}, restricts us to $A$-stable BDF methods, i.e.~to order $q=1,2$, all other arguments and estimates can be performed for $q=1,\dots,5$, cf.~\cite{KovacsLiLubich2019,BullerjahnKovacs2024}. \NBoff

%In the consistency part we are concerned with the approximation error which comes from the discretization and enters as defect terms in the stability analysis. 
The consistency analysis relies on an interplay of the semi-discrete consistency results from \cite[Section~6]{KovacsLiLubich2021} and the adaptation to fully discrete consistency estimates in \cite[Section~7]{BullerjahnKovacs2024}.%, with the adaptation to evolving surface finite elements as in \cite{KovacsLiLubich2019}.

\section{Stability}\label{Sec:Stability}

\subsection{Preparation: Estimates relating different finite element surfaces}
\label{section:aux}

%In our previous work \cite[Section~4]{KovacsLiLubichPower2017} and \cite[Section~7.1]{MCF} we proved technical results relating different finite element surfaces, which we recapitulate here, {\it taking verbatim the text of \cite[Section~7.1]{MCF} in this preparatory subsection.}
%We use the following setting.

The following technical results relating different finite element surfaces were developed in \cite[Section~4]{KovacsLiLubichPower2017} and \cite[Section~7.1]{KovacsLiLubich2019}.

The  finite element matrices of Section~\ref{subsection:DAE} induce discrete versions of Sobolev norms. Let $\bfx \in \R^{3 N}$ be a nodal vector defining the discrete surface $\Gamma_h[\bfx]$. For any nodal vector $\bfw=(w_j) \in \R^{N}$, with the corresponding finite element function $w_h= \sum_{j=1}^\dof w_j \phi_j[\bfx] \in S_h[\bfx]$, we define the following norms, where
$ \bfK(\bfx) = \bfM(\bfx) + \bfA(\bfx)$ in the third line:
\begin{equation}
\label{eq:def norms}
\begin{aligned}
	%\label{M-L2}
	&  \|\bfw\|_{\bfM(\bfx)}^{2} = \bfw^\top \bfM(\bfx) \bfw = \|w_h\|_{L^2(\Ga_h[\bfx])}^2 , \\
	%\label{A-H1}
	&  \|\bfw\|_{\bfA(\bfx)}^{2} = \bfw^\top \bfA(\bfx) \bfw = \|\nb_{\Ga_h[\bfx]} 	w_h\|_{L^2(\Ga_h[\bfx])}^2 , \\
	%\label{K-H1}
	&  \|\bfw\|_{\bfK(\bfx)}^{2} = \bfw^\top \bfK(\bfx) \bfw = \|w_h\|_{H^1(\Ga_h[\bfx])}^2 .
\end{aligned}
\end{equation}
In the following, when $\bfw\in \R^{dN}$, so that the corresponding finite element function $w_h$ maps into $\R^d$, we write simply $\| w_h \|_{L^2(\Gamma)}$ for $\| w_h \|_{L^2(\Gamma)^d}$ and $\| w_h \|_{H^1(\Gamma)}$ for $\| w_h \|_{H^1(\Gamma)^d}$, respectively.

Let now $\bfx,\bfy \in \R^{3 N}$ be two nodal vectors defining discrete surfaces $\Gamma_h[\bfx]$ and $\Gamma_h[\bfy]$, respectively. 
We  denote the difference by $\bfe= (e_j)=\bfx-\bfy \in \R^{  3  N}$. 
For  $\theta\in[0,1]$, we consider the intermediate surface $\Gamma_h^\theta=\Gamma_h[\bfy+\theta\bfe]$ and the corresponding finite element functions given as
$$
e_h^\theta=\sum_{j=1}^\dof e_j \phi_j[\bfy+\theta\bfe] .
$$

Under the condition that $\eps := \| \nabla_{\Gamma_h[\bfy]} e_h^0 \|_{L^\infty(\Gamma_h[\bfy])}\le \tfrac14$, it was shown that
\begin{equation}
\label{norm-equiv}
\begin{aligned}
	&\text{the norms $\|\cdot\|_{\bfM(\bfy+\theta\bfe)}$ are $h$-uniformly equivalent for $0\le\theta\le 1$,}
	\\
	&\text{and so are the norms $\|\cdot\|_{\bfA(\bfy+\theta\bfe)}$.}
\end{aligned}
\end{equation}

Additionally, for $\bfz\in \R^{dN}$ the following bounds were established:
\begin{equation}
\label{matrix difference bounds}
\begin{aligned}
	\bfw^\top (\bfM(\bfx)-\bfM(\bfy)) \bfz \leq &\ c \, \eps \, \|\bfw\|_{\bfM(\bfy)} \|\bfz\|_{\bfM(\bfy)} , \\[1mm]
	\bfw^\top (\bfA(\bfx)-\bfA(\bfy)) \bfz \leq &\ c \, \eps \, \|\bfw\|_{\bfA(\bfy)} \|\bfz\|_{\bfA(\bfy)} ,
\end{aligned}
\end{equation}
and similarly, using the $L^\infty$ norm of $z_h$ or its gradient and the $L^2$ norm of the gradient of $e_h$:
\begin{equation}
\label{matrix difference bounds e_x}
\begin{aligned}
	\bfw^\top (\bfM(\bfx)-\bfM(\bfy)) \bfz \leq &\ c \, \|\bfw\|_{\bfM(\bfy)} \|\bfe\|_{\bfA(\bfy)} , \\[1mm]
	\bfw^\top (\bfA(\bfx)-\bfA(\bfy)) \bfz \leq &\ c \, \|\bfw\|_{\bfA(\bfy)} \|\bfe\|_{\bfA(\bfy)} .
\end{aligned}
\end{equation}

\subsection{Defects and errors}

We choose reference finite element functions $x_h^*(\cdot,t)$, $v_h^*(\cdot,t)$, $u_h^*(\cdot,t),w_h^*(\cdot,t)$ on the interpolated surface $\Gamma_h[\bfx_\ast(t)]$ with nodal vectors, related to the exact solution $X$, $v$ and $u=(H,\nu)$, $w=(V,z)$ as follows:
%\begin{alignat*}{3}
%	& \text{nodal vector} & \quad & \text{collecting the values of}, \\
%	& \bfx_\ast(t)\in\R^{3N} & \quad & \text{the finite element nodes of $X$}, \\
%	& \bfv_\ast(t)\in\R^{3N} & \quad & \text{the finite element nodes of $v$}, \\
%	& \bfu_\ast(t)=\begin{pmatrix} \bfH_\ast(t) \\ \mathbf{n}_\ast(t) \end{pmatrix} \in\R^{4N} & \qquad & \text{the modified Ritz map of $u=(H,\nu)$}, \\
%	& \bfw_\ast(t)=\begin{pmatrix} \bfV_\ast(t) \\ \mathbf{z}_\ast(t) \end{pmatrix} \in\R^{4N}  & \qquad & \text{the Ritz map of $w=(V,z)$}, 
%\end{alignat*}
\begin{table}[h]
\centering
\begin{tabular}{ll}
	\toprule
	nodal vector & collecting the values of \\
	\midrule
	$\bfx_\ast(t)\in\R^{3N}$ & the nodal values of $X$, \\
	$\bfv_\ast(t)\in\R^{3N}$ & the nodal values of $v$, \\
	$\bfu_\ast(t)=\begin{pmatrix} \bfH_\ast(t) \\ \mathbf{n}_\ast(t) \end{pmatrix} \in\R^{4N}$ & the modified Ritz map of $u=(H,\nu)$ , \\
	$\bfw_\ast(t)=\begin{pmatrix} \bfV_\ast(t) \\ \mathbf{z}_\ast(t) \end{pmatrix} \in\R^{4N}$ & the Ritz map of $w=(V,z)$ .\\
	\bottomrule
\end{tabular}
\end{table}

The Ritz map and the \emph{modified} Ritz map are defined on the interpolated surface $\Gamma_h[\bfx_\ast(t)]$, see \cite[Section~6]{KovacsLiLubich2021}.

We then consider the vectors $\bfx_\ast^n=\bfx_\ast(t_n)$, $\bfv_\ast^n=\bfv_\ast(t_n)$, $\bfu_\ast^n=\bfu_\ast(t_n)$ and $\bfw_\ast^n=\bfw_\ast(t_n)$, which then satisfy the equations \eqref{eq:NumericalSchemeODE} up to some defects $\bfd_x^n$ and $\bfd_v^n$,
\begin{subequations}
\label{eq:matrix-form-X-v-star}
\begin{align}
	\label{eq:matrix-form-X-star}
	\mathbf{\dot x}_\ast^n &= \bfv_\ast^n + \bfd_x^n,
	\\
	\label{dv}
	\bfv_\ast^n &= \mathbf{V}_\ast^n \bullet \mathbf{n}_\ast^n + \dv^n, 
\end{align}
\end{subequations}
and $\bfu_\ast^n$, $\bfw_\ast^n$ satisfy the equations  \eqref{eq:NumericalScheme4order} up to some defects  $\bfd_{\bf u}^n$ and $\bfd_{\bf w}^n$: 
\begin{subequations}
\label{eq:defect vectors}
\begin{align}
	\bfM(\bftx_\ast^n) \bfdu_\ast^n - \bfA (\bftx_\ast^n) \bfw_\ast^n =&\ -\bfF(\bftx_\ast^n,\bftu_\ast^n) \bftw_\ast^n + \bff(\bftx_\ast^n,\bftu_\ast^n) + \bfM(\bftx_\ast^n) \bfd_{\bf u}^n, \\
	\bfM(\bftx_\ast^n) \bfw_\ast^n + \bfA(\bftx_\ast^n) \bfu_\ast^n =&\ \bfg(\bftx_\ast^n,\bftu_\ast^n) + \bfM(\bftx_\ast^n)\bfd_{\bf w}^n .
\end{align}
\end{subequations}
In the following, we simplify the matrix notation and abbreviate, e.g., $\bfM(\bftx^n)$ to $\tM^n$, and $\bfM(\bftx_\ast^n)$ to and $\tM_\ast^n$, etc.

The errors between the nodal values of the numerical solutions and the nodal values of the interpolated exact values are denoted by $\ex^n = \bfx^n - \bfx_\ast^n$, $\ev^n = \bfv^n - \bfv_\ast^n$, $\eu^n = \bfu^n - \bfu_\ast^n$ and $\ew^n = \bfw^n - \bfw_\ast^n$ and their corresponding finite element functions on the interpolated surface $\Ga_h[\bfx_\ast^n]$ are denoted by $e_x^n$, $e_v^n$, $e_u^n$, and $e_w^n$, respectively.

We obtain the error equations by subtracting \eqref{eq:defect vectors} from \eqref{eq:NumericalScheme4order}, and \eqref{eq:matrix-form-X-v-star} from \eqref{eq:NumericalSchemeODE}: for $n\geq q$,
\begin{subequations} 
\label{eq:error equations}
\begin{align}
	\label{eq:error eq - x}
	\dotex^n =&\ \ev^n -\bfd_x^n, \\
	\label{eq:error eq - v}
	\ev^n =&\   \mathbf{V}^n \bullet \mathbf{n}^n- \mathbf{V}_\ast^n \bullet \mathbf{n}_\ast^n - \bfd_v^n,
	\\
	\label{eq:error eq - u}
	\tM^n \doteu^n - \tA^n \ew^n =&\ \bfr_1^n, \\
	\label{eq:error eq - w}
	\tM^n \ew^n + \tA^n \eu^n =&\ \bfr_2^n,
\end{align}
\end{subequations}
where we have abbreviated 
\begin{align*}
\bfr_1^n:=&\ -  \big( \tM^n-\tM_\ast^n \big) \mathbf{\dot u}_\ast^n + \big( \tA^n-\tA_\ast^n \big) \bfw_\ast^n - \big(\bfF(\bftx^n,\bftu^n)\bftw^n - \bfF(\bftx_\ast^n,\bftu_\ast^n)\bftw_\ast^n\big) \nonumber \\
&\ + \big(\bff(\bftx^n,\bftu^n) - \bff(\bftx_\ast^n,\bftu_\ast^n)\big) - \tM_\ast^n\du^n , \\
\bfr_2^n:=&\ -  \big( \tM^n-\tM_\ast^n \big) \bfw_\ast^n - \big( \tA^n-\tA_\ast^n \big) \bfu_\ast^n + \big(\bfg(\bftx^n,\bftu^n) - \bfg(\bftx_\ast^n,\bftu_\ast^n)\big) - \tM_\ast^n\dw^n + \bfvartheta^n.
\end{align*}

\subsection{Results by Dahlquist and Nevanlinna $\&$ Odeh}

In order to derive energy estimates for BDF methods, we rely on important results from the $G$-stability theory of Dahlquist \cite[Theorem~3.3]{Dahlquist1978} and the multiplier technique of Nevanlinna and Odeh \cite[Section~2]{NevanlinnaOdeh1981}. 

%Note that Dahlquist's result is slightly extended to semi-inner products in \cite[Lemma~3.5]{ContriKovacsMassing2023}.
%\begin{lem}[{\cite[Theorem~3.3]{Dahlquist1978}}]
%	\label{Lemma:DahlquistOrig} 
%	Let $\delta (\zeta)= \sum_{j=0}^q \delta_j \zeta^j$ and $\mu(\zeta)=\sum_{j=0}^q \mu_j \zeta^j$ be polynomials of degree at most $q$, and at least one of them of degree $q$, that have no common divisor. Let $\langle\cdot,\cdot\rangle$ denote an inner product on $\mathbb{R}^N$ with associated norm $|\cdot|$. If
%	\begin{align*}
%		\Real \frac{\delta(\zeta)}{\mu(\zeta)} >0 , \qquad \text{ for all } \quad \zeta \in \mathbb{C}, \ |\zeta|<1,
%	\end{align*}
%	then there exists a symmetric positive definite matrix $G=(g_{ij}) \in \mathbb{R}^{q\times q}$ and real numbers $\gamma_0,\dotsc,\gamma_q$ such that, for all $w_0,\dotsc,w_q \in \mathbb{R}^N$, the following identity holds
%	\begin{align*}
%		\Real \Big\langle \sum_{i=0}^q \delta_i w_{q-i}, \sum_{j=0}^q \mu_j w_{q-j} \Big\rangle = \sum_{i,j=1}^q g_{ij} \langle w_i,w_j \rangle - \sum_{i,j=1}^q g_{ij} \langle w_{i-1},w_{j-1} \rangle +\Big|\sum_{i=0}^q \gamma_i w_i \Big|^2.
%	\end{align*}
%\end{lem}
%%
%Additionally, we will need this result for an arbitrary semi-inner product, therefore we use the following extension of the above result to semi-inner products by \cite[Lemma~3.5]{ContriKovacsMassing2023}.
\begin{lem}[{$G$-stability \cite[Theorem~3.3]{Dahlquist1978}}]]
\label{Lemma:Dahlquist} 
Let $\delta (\zeta)= \sum_{j=0}^q \delta_j \zeta^j$ and $\mu(\zeta)=\sum_{j=0}^q \mu_j \zeta^j$ be polynomials of degree at most $q$, and at least one of them of degree $q$, that have no common divisor. Let $(\cdot , \cdot )$ be a semi-inner product on a Hilbert space $H$ with associated semi-norm $|\cdot |$. If
\begin{align*}
	\Real \frac{\delta(\zeta)}{\mu(\zeta)} >0 , \qquad \text{ for all } \quad \zeta \in \mathbb{C}, \ |\zeta|<1,
\end{align*}
then there exists a symmetric positive definite matrix $G=(g_{ij}) \in \mathbb{R}^{q\times q}$ and real numbers $\gamma_0,\dotsc,\gamma_q$ such that, for all $w_0,\dotsc,w_q \in H$, we have
\begin{align*}
	\Real \Big( \sum_{i=0}^q \delta_i w_{q-i}, \sum_{j=0}^q \mu_j w_{q-j} \Big) = \sum_{i,j=1}^q g_{ij} ( w_i,w_j ) - \sum_{i,j=1}^q g_{ij} ( w_{i-1},w_{j-1} ) +  \Big|\sum_{i=0}^q \gamma_i w_i \Big|^2.
\end{align*}
\end{lem}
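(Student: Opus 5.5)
The plan is to reduce the statement to Dahlquist's original result \cite[Theorem~3.3]{Dahlquist1978}, which is formulated for a genuine inner product. The key observation is that Dahlquist's identity is a purely algebraic identity between coefficient matrices. Once this matrix identity is extracted, it transfers to any symmetric bilinear form, in particular to a semi-inner product, by bilinearity alone. Definiteness of the form plays no role.

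\emph{Step 1: extract the matrix identity.} Apply Dahlquist's theorem in the simplest setting $\mathbb{R}^1$ with the standard inner product, under the stated hypotheses on $\delta$ and $\mu$. This yields a symmetric positive definite $G=(g_{ij})\in\mathbb{R}^{q\times q}$ and reals $\gamma_0,\dots,\gamma_q$. With these, for all real scalars $w_0,\dots,w_q$,
\begin{align*}
\Big(\sum_{i=0}^q \delta_i w_{q-i}\Big)\Big(\sum_{j=0}^q \mu_j w_{q-j}\Big) = \sum_{i,j=1}^q g_{ij} w_iw_j - \sum_{i,j=1}^q g_{ij} w_{i-1}w_{j-1} + \Big(\sum_{i=0}^q \gamma_i w_i\Big)^2 .
\end{align*}
Both sides are quadratic forms in $(w_0,\dots,w_q)\in\mathbb{R}^{q+1}$. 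By polarization, a quadratic form determines its symmetric coefficient matrix uniquely. Let $S$ be the symmetrization of the rank-one matrix with entries $\delta_{q-k}\mu_{q-l}$, indexed by $k,l=0,\dots,q$. Let $\widehat G_1$ and $\widehat G_0$ be $G$ embedded in the lower-right and upper-left $q\times q$ blocks of a $(q+1)\times(q+1)$ matrix, respectively. Then we obtain
\begin{align*}
S = \widehat G_1 - \widehat G_0 + \gamma\gamma^\top, \qquad \gamma=(\gamma_0,\dots,\gamma_q)^\top .
\end{align*}

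\emph{Step 2: transfer to the semi-inner product.} For $w_0,\dots,w_q\in H$ and any real symmetric $C=(c_{kl})$, define $\Phi_C(w):=\sum_{k,l} c_{kl}(w_k,w_l)$. This expression is linear in $C$. Expanding the left-hand side of the lemma by bilinearity gives $\sum_{k,l}\delta_{q-k}\mu_{q-l}(w_k,w_l)$. Taking real parts and using the symmetry $\Real(w_k,w_l)=\Real(w_l,w_k)$, this equals $\Real\,\Phi_S(w)$. On the right-hand side, each of the three terms is $\Real\,\Phi$ of the corresponding matrix:
\begin{itemize}
\item the two $G$-sums are $\Phi_{\widehat G_1}(w)$ and $\Phi_{\widehat G_0}(w)$, which are real;
\item the square term is $\big|\sum_i\gamma_iw_i\big|^2=\sum_{k,l}\gamma_k\gamma_l(w_k,w_l)=\Phi_{\gamma\gamma^\top}(w)$, since the semi-norm is induced by the semi-inner product and the $\gamma_i$ are real.
\end{itemize}
Linearity of $C\mapsto\Phi_C$ together with the matrix identity of Step 1 then gives the claim. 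The matrix $G$ and the numbers $\gamma_i$ are the same as in the scalar case, so they are independent of the semi-inner product.

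\emph{Main obstacle.} The only delicate point is the passage from the scalar identity to the matrix identity in Step 1. Dahlquist's statement is an identity for all vectors in an inner product space, not explicitly a coefficient identity, so one must argue via polarization. This requires restricting to real scalars so that the quadratic forms have real symmetric coefficient matrices. Everything else is routine bilinear bookkeeping. Degeneracy of $|\cdot|$ never enters, because the proof never divides by a norm or uses the positivity of $(\cdot,\cdot)$.
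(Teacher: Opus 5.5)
Your proposal is correct. It differs from the paper, which gives no argument of its own: the paper cites Dahlquist's Theorem~3.3 for the inner-product case and defers the extension to semi-inner products to \cite[Lemma~3.5]{ContriKovacsMassing2023}.

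You instead make the extension self-contained. You apply Dahlquist's theorem on $\mathbb{R}^1$ and use polarization to read off the coefficient identity $S=\widehat G_1-\widehat G_0+\gamma\gamma^\top$ on $\mathbb{R}^{(q+1)\times(q+1)}$. You then transfer it to $H$ purely by (sesqui)linearity.

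This gives more than the stated lemma. It shows that $G$ and $\gamma_0,\dots,\gamma_q$ depend only on $\delta$ and $\mu$, and that the identity holds for every symmetric bilinear (or Hermitian) form, definite or not. That is exactly what the stability proof needs, since it applies the lemma with the merely positive semi-definite form induced by $\bfA^k$. It is also the same ``comparison of coefficients'' mechanism the paper uses to prove Lemma~\ref{Lemma:DahlquistUpper} for an arbitrary symmetric bilinear form.

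Your route also avoids a different approach: perturbing to $(\cdot,\cdot)+\varepsilon\langle\cdot,\cdot\rangle$ and letting $\varepsilon\to 0$. That approach would need to know separately that Dahlquist's $G$ and $\gamma$ do not depend on the inner product.

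The citation in the paper buys brevity; your argument makes explicit that positivity of the form is never used. Its only implicit assumption is that the coefficients $\delta_j,\mu_j$ are real. This holds in the intended setting: $G$ and the $\gamma_i$ are real, and the BDF application uses $\mu(\zeta)=1-\eta_q\zeta$.
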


The application of $G$-stability to the BDF schemes, for $q=1,\dotsc,5$, is ensured by the following result.
\begin{lem}[{Nevanlinna and Odeh multipliers \cite[Section~2]{NevanlinnaOdeh1981}}]
\label{Lemma:MultiplierTechnique}
For $1\leq q\leq 5$, there exists $0 \leq \eta_q <1$ such that for $\delta (\zeta)= \sum_{j=0}^q \frac{1}{j} (1-\zeta)^j$,
\begin{align*}
	\Real \frac{\delta(\zeta)}{1-\eta_q \zeta} >0 , \qquad \text{ for all } \quad \zeta \in \mathbb{C}, \ |\zeta|<1 .
\end{align*}
The classical values of $\eta_q$ from \cite[Table]{NevanlinnaOdeh1981} are respectively found to be 

\begin{center}
	\begin{minipage}{0.742\linewidth}
		\begin{itemize}
			\item[--] $\eta_1=0$, $\eta_2=0$ for $A$-stable BDF methods $q = 1,2$,
			\item[--] $\eta_3=0.0836$, $\eta_4=0.2878$, $\eta_5=0.8160$ for $A(\alpha_q)$-stable BDF methods $q = 2,\dotsc,5$.
		\end{itemize}
	\end{minipage}
\end{center}
\end{lem}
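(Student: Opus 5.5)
The statement to be proved is the Nevanlinna--Odeh multiplier lemma (Lemma~\ref{Lemma:MultiplierTechnique}). We only need it in the form used later, i.e.\ positivity of $\Real \delta(\zeta)/(1-\eta_q\zeta)$ on the open unit disc for the BDF generating polynomial. My plan is to reduce the claim, by the maximum principle for harmonic functions, to a statement on the unit circle, and then to check that statement case by case.

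\emph{Reduction to the boundary.} Since $0\le\eta_q<1$, the function $r(\zeta)=\delta(\zeta)/(1-\eta_q\zeta)$ is holomorphic on a neighbourhood of the closed unit disc, so $\Real r$ is harmonic there. By the minimum principle it suffices to show $\Real r(e^{i\theta})\ge 0$ for all $\theta$, together with $\Real r\not\equiv 0$. Equality $\Real r=0$ is allowed only on the boundary. Indeed $\theta=0$ gives $\delta(1)=0$, and a nonconstant harmonic function attains its minimum only on the boundary, so strict positivity holds in the interior. Writing
\[
\Real r(e^{i\theta}) = \frac{\Real\big(\delta(e^{i\theta})(1-\eta_q e^{-i\theta})\big)}{|1-\eta_q e^{i\theta}|^2},
\]
the task becomes showing that the trigonometric polynomial $P_q(\theta)=\Real\big(\delta(e^{i\theta})(1-\eta_q e^{-i\theta})\big)$ is nonnegative.

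\emph{The $A$-stable cases $q=1,2$ with $\eta_q=0$.} These are what the paper actually relies on, and they can be done by hand. For $q=1$, $\delta(\zeta)=1-\zeta$ and $\Real(1-e^{i\theta})=1-\cos\theta\ge0$. For $q=2$, $\delta(\zeta)=\tfrac32-2\zeta+\tfrac12\zeta^2$. Then
\[
\Real\delta(e^{i\theta})=\tfrac32-2\cos\theta+\tfrac12\cos 2\theta=\tfrac32-2c+c^2-\tfrac12=(1-c)^2\ge0,
\]
with $c=\cos\theta$. (Equivalently, these cases are just $A$-stability of BDF1 and BDF2.)

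\emph{The cases $q=3,4,5$.} Here $P_q(\theta)$ is a polynomial of degree $q+1$ in $c=\cos\theta$, and it has a factor $(1-c)$ because $\delta(1)=0$. A short expansion should in fact show a factor $(1-c)^2$. The remaining factor has degree $q-1$; one checks that it is nonnegative on $[-1,1]$ for the tabulated $\eta_q$, either by explicit computation or by citing \cite{NevanlinnaOdeh1981}. I expect this verification, a finite but fiddly polynomial-positivity check in which the choice of $\eta_q$ near the optimum matters most for $q=5$, to be the main obstacle. For the present paper it is inessential, since only $q=1,2$ are used and those are settled above.
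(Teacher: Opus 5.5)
The paper does not prove this lemma; it only quotes it from Nevanlinna and Odeh. Your route is therefore genuinely different. For the only cases the stability proof uses ($q=1,2$ with $\eta_q=0$), you give a complete elementary argument, and it is correct. The reduction to the circle via the minimum principle is sound; nonconstancy of $\Real r$ is immediate from $\Real r(0)=\delta(0)>0=\Real r(1)$. The computations $\Real(1-e^{i\theta})=1-c$ and $\Real\delta(e^{i\theta})=(1-c)^2$ check out. You also tacitly used the correct $\delta(\zeta)=\sum_{l=1}^q\frac1l(1-\zeta)^l$; the lower index $j=0$ in the statement is a typo. For $q=3,4,5$ you, like the paper, ultimately rely on the citation.

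Your sketch for those cases is, however, wrong in two places.
\begin{itemize}
\item[--] $P_q$ has degree $q$ in $c$, not $q+1$, because its top frequency is $\cos q\theta$.
\item[--] There is no factor $(1-c)^2$ once $\eta_q>0$. Since $\delta$ truncates the series of $-\log\zeta$, we have $\delta(e^{i\theta})=-i\theta+O(\theta^{q+1})$. Hence $P_q(\theta)=\eta_q\,\theta\sin\theta+O(\theta^{q+1})=2\eta_q(1-c)+O((1-c)^2)$.
\end{itemize}
So the zero at $c=1$ is simple. The cofactor has degree $q-1$, so your count is right only because the two slips cancel, and it takes the value $2\eta_q>0$ at $c=1$.

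With this corrected, your method does work. For $q=3$ one finds
\[
3P_3=(1-c)\big[4c^2-(5+2\eta)c+1+8\eta\big].
\]
The vertex $(5+2\eta)/8$ of the quadratic lies in $[-1,1]$. So the bracket is nonnegative there if and only if $4\eta^2-108\eta+9\le 0$, i.e.\ $\eta\ge 13.5-6\sqrt5\approx 0.0836$. This recovers the tabulated $\eta_3$.
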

The exact multipliers were computed in \cite{AkrivisKatsoprinakis2015}, while multipliers for BDF6 were derived in \cite{AkrivisChenYuZhou2021}.

%\NBon It is crucial to point out here, that we will only use these lemmas for the $A$-stable case, $q = 1,2$, i.e.~for which the multiplier is $\eta_q = 0$.\NBoff

We thus introduce the $G$-semi-norm associated to the semi-inner product $(\cdot,\cdot)$ on a Hilbert space $H$: Given a collection of vectors $W^n=(w^n,\dotsc,w^{n-q+1}) \subset H^q$, we define
\begin{align*}
|W^n|_{G}^2:= \sum_{i,j=1}^q g_{ij} ( w^{n-i+1},w^{n-j+1} ),
\end{align*}
where $G$ is the symmetric positive definite matrix appearing in Lemma~\ref{Lemma:Dahlquist}. Then with the smallest and largest eigenvalues of $G$, denoted by $\lambda_0$ and $\lambda_1$, we have the inequalities:
\begin{align}
\lambda_0 |w^n|^2 \leq \lambda_0 \sum_{j=1}^q |w^{n-j+1}|^2 \leq |W^n|_G^2 \leq \lambda_1 \sum_{j=1}^q |w^{n-j+1}|^2, \label{eq:GnormEst}
\end{align}
where $|\cdot|$ is the semi-norm on $H$ induced by the semi-inner product $(\cdot,\cdot)$.
Later on, an additional subscript, e.g.~$|\cdot|_{G,\bfA^n}$, specifies which semi-inner product generates the $G$-weighted semi-norm. Note that Dahlquist's result is slightly extended to semi-inner products in \cite[Lemma~3.5]{ContriKovacsMassing2023}.

These results have previously been applied to the error analysis for the BDF time discretization of PDEs when testing the error equation with the error in \cite{LubichMansourVenkataraman2013}, and the discrete time derivative of the error in \cite{KovacsLiLubich2019}.

\subsection{An upper bound in the spirit of Dahlquist}

In order to get the desired upper bound of one particular error term on the right-hand side, coming from $\wt\bfF^n \bfte_w^n$, we would like to treat this as in the semi-discrete error analysis of \cite{KovacsLiLubich2021} by a product rule. 

For the fully discrete error analysis we need to apply a similar technique as for the left-hand side, combining the results from Dahlquist and Nevanlinna $\&$ Odeh. But in contrast to a lower bound in this classical case on the left-hand side, we need here an upper bound on the right-hand side.

We show the following variant of Lemma~\ref{Lemma:Dahlquist}.
\begin{lem}[{Dahlquist-type upper bound}]
\label{Lemma:DahlquistUpper}
Let $q \in \{1,2\}$ and $\delta (\zeta)= \sum_{j=1}^q \frac1j (1-\zeta)^j$ be the generating polynomial of the BDF method, and $\gamma(\zeta)=\frac{(1-(1-\zeta)^q)}{\zeta}$ the generating polynomial of the extrapolation.

Then, there exists a real (diagonal) matrix $B\in \R^{q\times q}$ and real vector $a \in \R^{q+1}$, such that, for all $w_0,\dotsc,w_q \in H$, and an arbitrary symmetric bilinear form $(\cdot,\cdot) \colon H \times H \to \R$ we have:

\begin{align*}
	\Big( \sum_{i=0}^q \delta_i w_{q-i}, \sum_{j=0}^q \gamma_j w_{q-j} \Big) 
	= &\ \sum_{i,j=1}^q B_{ij} ( w_i,w_j ) - \sum_{i,j=1}^q B_{ij} ( w_{i-1},w_{j-1} ) \\
	&\ -  \Big(\sum_{i=0}^q a_i w_i ,\sum_{i=0}^q a_i w_i \Big) .
\end{align*}
\end{lem}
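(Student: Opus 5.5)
The plan is to prove the identity by direct computation of coefficients. We do not invoke Lemma~\ref{Lemma:Dahlquist}: its positivity hypothesis points the wrong way here, and for $q\in\{1,2\}$ everything is explicit. Both sides are symmetric bilinear expressions in $(w_0,\dots,w_q)$. For an arbitrary symmetric bilinear form it therefore suffices to match the coefficients of every pair $(w_i,w_j)$, $i\le j$, counting cross terms twice. Because the identity is purely algebraic, it holds for any symmetric bilinear form, with no definiteness needed; this is why the lemma is stated in that generality.

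The extrapolation enters the scheme as $\bftw^n=\sum_j\gamma_j\bfw^{n-1-j}$. I will therefore read the second factor as generated by $\mu(\zeta)=\zeta\gamma(\zeta)=1-(1-\zeta)^q$, with $\mu_0=0$ and $\mu_j=\gamma_{j-1}$. This is the indexing convention I will fix at the start of the proof.

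\emph{Case $q=1$.} Here $\delta(\zeta)=1-\zeta$ and $\mu(\zeta)=\zeta$, so the left-hand side is $(w_1-w_0,w_0)$. The polarization identity gives
\begin{equation*}
(w_1-w_0,w_0)=\tfrac12(w_1,w_1)-\tfrac12(w_0,w_0)-\tfrac12(w_1-w_0,w_1-w_0).
\end{equation*}
This is the claim with $B=\tfrac12$ and $a=(-\tfrac1{\sqrt2},\tfrac1{\sqrt2})$.

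\emph{Case $q=2$.} Here $\delta_0=\tfrac32$, $\delta_1=-2$, $\delta_2=\tfrac12$ and $\mu_1=2$, $\mu_2=-1$. The left-hand side is $(\tfrac32w_2-2w_1+\tfrac12w_0,\;2w_1-w_0)$. I make the ansatz $B=\mathrm{diag}(b_1,b_2)$ and $a=(a_0,a_1,a_2)$, and compare the six coefficients:
\begin{equation*}
b_2=a_2^2,\quad -2a_2a_1=3,\quad -2a_2a_0=-\tfrac32,\quad b_1-b_2-a_1^2=-4,\quad -2a_1a_0=3,\quad -b_1-a_0^2=-\tfrac12 .
\end{equation*}
The three cross-term equations give $a_1=-2a_0$ and $a_0^2=\tfrac34$. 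Hence
\begin{equation*}
a=\tfrac{\sqrt3}{2}(1,-2,1),\qquad b_2=\tfrac34,\qquad b_1=-4+\tfrac34+3=-\tfrac14 .
\end{equation*}
The remaining equation is then a consistency check: $-b_1-a_0^2=\tfrac14-\tfrac34=-\tfrac12$, which holds. So $B=\mathrm{diag}(-\tfrac14,\tfrac34)$, and the last term is the squared second difference $\tfrac34(w_2-2w_1+w_0,w_2-2w_1+w_0)$.

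The main obstacle is that the system is overdetermined: six equations in five unknowns. Solvability rests on the consistency check in the $(w_0,w_0)$ coefficient, which works because of the specific BDF2/extrapolation coefficients. I would record that $B$ need not be positive definite (here $b_1<0$), since only the telescoping structure and the sign of the $a$-term are needed later. In the application, the sign $-\big(\sum a_iw_i,\sum a_iw_i\big)\le0$ under $\bfF\ge0$ is exactly what produces the required upper bound.
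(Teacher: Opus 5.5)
Your proposal is correct and follows the paper's proof: a direct comparison of coefficients that yields the same $B=\tfrac12$ and $B=\mathrm{diag}(-\tfrac14,\tfrac34)$, $a=\tfrac{\sqrt3}{2}(1,-2,1)$; your $q=1$ vector $a$ differs from the paper's only by an irrelevant overall sign. Your explicit convention that the second factor is generated by $\zeta\gamma(\zeta)$ is the one the paper's proof uses implicitly (it writes the $q=1$ left-hand side as $(w_1-w_0,w_0)$), and it is genuinely needed, since with the literal unshifted indexing $\sum_{j=0}^q\gamma_j w_{q-j}$ the $q=1$ identity has no real solution.
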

\begin{proof}
We first consider $q=1$: In this case the equation reduces to
\begin{align*}
	(w_1-w_0,w_0)=B_{11} (w_1,w_1) - B_{11} (w_0,w_0) - (a_1 w_1 + a_0 w_0,a_1 w_1 + a_0 w_0),
\end{align*} 
and we obtain a solution by comparison of coefficients, as

$$B = \frac12, \qquad a= \Big( \frac1{\sqrt2}, -\frac1{\sqrt2} \Big).$$

In the case $q=2$, we analogously obtain the solution:
\begin{alignat*}{3}
	B = &\ \begin{bmatrix}
		-\frac14 & 0\\ 
		0 &   \frac34 
	\end{bmatrix} , &\ \qquad &\ a= \Big( \frac{\sqrt3}2, -\sqrt3, \frac{\sqrt3}2 \Big) .
\end{alignat*}
\end{proof}

\begin{rem}[{Upper bound for $q = 1,2$}]
\label{Rem:UpperBound}
This now puts us in a position, where for a \emph{positive semi-definite} bilinear form $\calF$, which we obtained for $\wt\bfF^n$ by changing the evolution equations, see Remark~\ref{rem:zHChange}, Lemma~\ref{Lemma:DahlquistUpper} yields
\begin{align*}
	\mathcal{F} \Big( \sum_{i=0}^q \delta_i w_{q-i}, \sum_{j=0}^q \gamma_j w_{q-j} \Big) \leq \sum_{i,j=1}^q B_{ij} \calF( w_i,w_j ) - \sum_{i,j=1}^q B_{ij} \calF( w_{i-1},w_{j-1} ) .
\end{align*}
\end{rem}

\begin{rem}[{Required upper bound for $q = 3,4,5$}] \label{rem: Upper bound q345}
Higher order BDF methods with $q=3,4,5$ are no longer $A$-stable, but only $A(\alpha_q)$-stable, which results in $\eta_q \neq 0$, cf. Lemma~\ref{Lemma:MultiplierTechnique}. This means that the left-hand side of the upper bound comparable to Remark~\ref{Rem:UpperBound} for higher order BDF methods, would need to be of the form
\begin{align*}
	\mathcal{F} \Big( \sum_{i=0}^q \delta_i w_{q-i}, \sum_{j=0}^q \gamma_j w_{q-j} - \eta_q \sum_{j=0}^q \gamma_j w_{q-j-1} \Big),
\end{align*}
so that we would be able to perform similar estimates in this case. 
Notice that the left-hand side now depends on $w_{-1}$, which means we need a different structure of terms on the right-hand side as well, allowing for cancellation by a telescoping sum. 

To the knowledge of the authors, such an estimate is not known, nevertheless if such an estimate would be available we strongly expect that the stability analysis, and hence the main result, of this paper could be extended to BDF methods of order $q=3,4,5$.
\end{rem}

\subsection{Stability estimate}

We bound the errors in terms of the defects and initial values. The errors will be estimated in the $H^1$ norm on the interpolated surface $\Ga_h[\bfx_\ast^n]$: For a nodal vector $\bfe$ corresponding to a finite element function $e \in S_h(\bfx_\ast^n)$, recalling \eqref{eq:def norms} with $\bfK_\ast^n=\bfM_\ast^n+\bfA_\ast^n$, we have
$\| \bfe \|_{\K(\bfx_\ast^n)}^2 = \bfe \TbfK(\bfx_\ast^n) \bfe = \| e \|_{H^1(\Ga_h[\bfx_\ast^n])}^2$.
The defect terms will either be estimated in the \emph{$H^1$ norm} 
\begin{align}
\| \bfd \|_{\bfK(\bfx_\ast^n)}^2 = \bfd \TbfK(\bfx_\ast^n) \bfd = \| d \|_{H^1(\Ga_h[\bfx_\ast^n])}^2,
\end{align}
or the \emph{weak dual norm}
\begin{align}
\| \bfd \|_{\ast,\bfx_\ast^n}^2 := \bfd \TbfM(\bfx_\ast^n) \bfK(\bfx_\ast^n)^{-1} \bfM(\bfx_\ast^n) \bfd,
\end{align}
for the nodal vector $\bfd$ of a corresponding finite element function $d \in S_h[\bfx_\ast^n]$. By \cite[equation~(5.5)]{KovacsLiLubichPower2017}, this equals the following dual norm:
\begin{align}
\| \bfd \|_{\ast,\bfx_\ast^n}^2 = \| d\|_{H_h^{-1}(\Ga_h[\bfx_\ast^n])}^2:= \sup_{0\not= \varphi_h \in S_h[\bfx_\ast^n]} \frac{\int_{\Ga_h[\bfx_\ast^n]} d \varphi_h}{\|\varphi_h \|_{H^1(\Ga_h[\bfx_\ast^n])}}.
\end{align}

We are now in place to prove the fully discrete stability estimate, which is the heart of our convergence proof.

\begin{prop} \label{Prop:StabilityEst}
%\NBon Consider the linearly implicit BDF time discretization of order $q = 1, 2$. \NBoff

Assume that for step sizes restricted by $\tau^q  \leq C_0 h^k$, there exists $\kappa$ with $1<\kappa \leq k$ such that the defects are bounded by
\begin{equation}\label{eq:ConditionDefectsSatbility}
	\begin{aligned}
		\| \bfd_x^n \|_{\bfK(\bfx_\ast^n)} + \| \bfd_v^n \|_{\bfK(\bfx_\ast^n)} + \| \bfd_u^n \|_{\ast,\bfx_\ast^n} + \|\dot \bfd_u^n\|_{\ast,\bfx_\ast^n}  \leq&\ ch^\kappa, \\
		\| \bfd_w^n \|_{\ast,\bfx_\ast^n} + \|\dot \bfd_w^n\|_{\ast,\bfx_\ast^n} + \|\bfd_w^i\|_{\ast,\bfx_\ast^i} \leq&\ ch^\kappa 
	\end{aligned}
\end{equation}
for $q\tau \leq n\tau \leq T$ and $0\leq i \leq q-1$, and that also the error of the starting values are bounded by 
\begin{align}
	I_h^{q-1}:= \sum_{i=0}^{q-1} \| \bfe_x^i\|_{\bfK(\bfx_\ast^n)}^2 + \| \bfe_u^i\|_{\bfK(\bfx_\ast^n)}^2 + \| \bfe_w^i\|_{\bfK(\bfx_\ast^n)}^2 + \tau \sum_{i=1}^{q-1} \| \partial^\tau \bfe_x^i\|_{\bfK(\bfx_\ast^n)}^2 \leq ch^{2\kappa} , \label{eq:AssInitialValues}
\end{align}
where we use the notational convention $\partial^\tau=\partial_1^\tau$.

Then there exists $h_0>0$ and $\tau_0>0$ such that the following stability estimate holds for all $h \leq h_0$, $\tau \leq \tau_0$, satisfying $\tau^q \leq C_0 h^k$ (where $C_0>$ arbitrary), and $n$ with $q\tau \leq n\tau \leq T$, 
\begin{align}
	\| \bfe_x^n\|_{\bfK(\bfx_\ast^n)}^2 + \| \bfe_v^n\|_{\bfK(\bfx_\ast^n)}^2 + \| \bfe_u^n\|_{\bfK(\bfx_\ast^n)}^2 + \| \bfe_w^n\|_{\bfK(\bfx_\ast^n)}^2 \leq c (I_h^{q-1}+D_h^n), \label{eq:StabilityBound}
\end{align}
where the defects are collected in 
\begin{align}
	D_h^n 
	%	:=&\  \max_{q\leq k \leq n}  \| \bfd_x^k\|_{\bfK(\bfx_\ast^k)}^2 + \max_{q\leq k \leq n}  \| \bfd_v^k\|_{\bfK(\bfx_\ast^k)}^2 + \max_{q\leq k \leq n}  \| \bfd_u^k\|_{\ast,\bfx_\ast^k}^2 + \max_{q\leq k \leq n}  \| \bfd_w^k\|_{\ast,\bfx_\ast^k}^2 \nonumber \\
	:=&\  \max_{q\leq k \leq n} \big( \| \bfd_x^k\|_{\bfK(\bfx_\ast^k)}^2 + \| \bfd_v^k\|_{\bfK(\bfx_\ast^k)}^2 + \| \bfd_u^k\|_{\ast,\bfx_\ast^k}^2 + \| \bfd_w^k\|_{\ast,\bfx_\ast^k}^2 \big) \nonumber \\
	&\ + \sum_{i=0}^{q-1}\| \bfd_w^{i}\|_{\ast,\bfx_\ast^{i}}^2 + \tau \sum_{k=q}^n \big( \| \bfdd_u^k \|_{\ast,\bfx_\ast^k}^2 + \| \bfdd_u^k \|_{\ast,\bfx_\ast^k}^2 \big) ,
\end{align}
and where $C > 0$ is independent of $h$, $\tau$ and $n$, but depends exponentially on the final time $T$.
\end{prop}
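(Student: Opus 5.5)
We argue by a bootstrap: let $n^\ast \geq q$ be the largest index such that the $W^{1,\infty}$-smallness bounds
\[
\|e_x^k\|_{W^{1,\infty}} + \|e_u^k\|_{W^{1,\infty}} + \|e_w^k\|_{W^{1,\infty}} \leq h^{(\kappa-1)/2}
\]
hold for all $k \leq n^\ast$. For $k\le q-1$ these follow from \eqref{eq:AssInitialValues} and an inverse estimate. Under these bounds the geometric quantities $A_h$, $H_h$, $\nu_h$ on $\Ga_h[\bftx^k]$ are uniformly bounded. Since the extrapolated values $\bftx^k$ are $O(\tau^q+h^\kappa)$-close to $\bfx_\ast^k$, the norm equivalences \eqref{norm-equiv} and the matrix difference bounds \eqref{matrix difference bounds}--\eqref{matrix difference bounds e_x} apply between $\Ga_h[\bftx^n]$, $\Ga_h[\bftx_\ast^n]$ and $\Ga_h[\bfx_\ast^n]$. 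At the end we show that the stability bound \eqref{eq:StabilityBound} actually holds for $n\le n^\ast$. Combined with an inverse estimate and $\tau^q \leq C_0 h^k$, this gives strictly better $W^{1,\infty}$ bounds, so $n^\ast$ reaches $T/\tau$.

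For $n\le n^\ast$ we derive two energy estimates mirroring the semi-discrete scheme of \cite{KovacsLiLubich2021}, exploiting the antisymmetric structure of \eqref{eq:error eq - u}--\eqref{eq:error eq - w}.

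\emph{First estimate.} Test \eqref{eq:error eq - u} with $\ew^n$ and \eqref{eq:error eq - w} with $\doteu^n$, and subtract. The $\tA^n$ terms give $(\doteu^n)^\top\tA^n\eu^n$, which by Lemma~\ref{Lemma:Dahlquist} with $\eta_q=0$ (Lemma~\ref{Lemma:MultiplierTechnique}) yields a telescoping $|E_u^n|_{G,\tA^n}^2$ up to $O(\tau)$ surface-change terms. The mass terms give $(\ew^n)^\top\tM^n\ew^n$. In $\bfr_1^n$ the critical term is $(\bfF(\bftx^n,\bftu^n)\bftw^n-\bfF(\bftx_\ast^n,\bftu_\ast^n)\bftw_\ast^n)$. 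After splitting off the Lipschitz differences, which are bounded by $\|\eu\|_{\bfK}+\|\ex\|_{\bfK}$ times $\|\ew\|_{\bfM}$, it leaves $(\ew^n)^\top\wt\bfF^n\bfte_w^n$.

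\emph{Second estimate.} Apply the discrete time derivative to \eqref{eq:error eq - w}, test with $\ew^n$, and combine with \eqref{eq:error eq - u} tested with $\doteu^n$. This gives control of $\|\doteu^n\|_{\tM^n}^2$ and a telescoping $|E_w^n|_{G,\tA^n}^2$. Here the term $(\bfde_w^n)^\top\wt\bfF^n\bfte_w^n$ appears. We handle it with Lemma~\ref{Lemma:DahlquistUpper} via Remark~\ref{Rem:UpperBound}, using positive semi-definiteness of $\wt\bfF^n$ (Remark~\ref{rem:zHChange}). It is bounded above by the telescoping difference of $\sum B_{ij}(\ldots)_{\wt\bfF^n}$, plus $O(\tau)$ commutator terms from the change $\wt\bfF^{n}\to\wt\bfF^{n-1}$. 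These are controlled by the $W^{1,\infty}$ bounds on the numerical and exact solutions and on their discrete derivatives. Since $B$ is indefinite for $q=2$, the telescoped $\wt\bfF$ terms are not dominated by sign. However they are bounded by $c\|\ew\|_{\bfM}^2$ over $q$ consecutive steps, and are absorbed after summation using the $L^2$ control of $\ew$ from the first estimate. The remaining nonlinear terms in $\bff,\bfg$ are handled as in \cite[Section~5]{KovacsLiLubich2021}, using that $\bfg$ depends on the gradient of $\nu_h$ only through bounded factors.

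The defects, measured in the dual norms, are paired with $H^1$ quantities. The terms with discrete derivatives of defects are treated by discrete summation by parts in time, which produces the $\tau\sum\|\bfdd\|_\ast^2$ contributions in $D_h^n$. The shift $\bfvartheta^n$ is constant for $n\ge q$, so it vanishes under the discrete derivative, and $\bfvartheta^{q-1}$ is bounded by the starting errors and $\bfd_w^{i}$.

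\emph{Velocity law and closing.} From \eqref{eq:error eq - v} we get $\|\ev^n\|_{\bfK}\le c(\|\eu^n\|_{\bfK}+\|\ew^n\|_{\bfK})+\|\dv^n\|_{\bfK}$, again using the $W^{1,\infty}$ bounds. Then \eqref{eq:error eq - x} combined with Lemma~\ref{Lemma:Dahlquist} tested with $\ex^n$ in the $\bfK$ inner product bounds $\ex$. Summing the estimates over $n$, absorbing, and applying a discrete Gronwall inequality yields \eqref{eq:StabilityBound} with exponential dependence on $T$.

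\textbf{Main obstacle.} The key difficulty is the $\wt\bfF^n$ term in the second estimate: both the time variation of $\wt\bfF^n$ in the Dahlquist-type telescoping and the indefinite matrix $B$ for $q=2$ require care. This is the step we would work out first.
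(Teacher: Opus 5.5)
Your global architecture matches the paper's: a $W^{1,\infty}$ bootstrap, energy estimates exploiting the anti-symmetry, $G$-stability with $\eta_q=0$, Lemma~\ref{Lemma:DahlquistUpper} made applicable by positive semi-definiteness of $\wt\bfF^k$, then velocity, position and Gr\"onwall. But Part (A) as you describe it does not close, because the test functions you name do not produce the terms you attribute to them.

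\emph{First estimate.} Testing \eqref{eq:error eq - u} with $\ew^k$ and \eqref{eq:error eq - w} with $\doteu^k$ and subtracting, the mass terms cancel by symmetry. You obtain $(\doteu^k)^\top\tA^k\eu^k+\|\ew^k\|_{\tA^k}^2$, not $\|\ew^k\|_{\tM^k}^2$. The $\tM$-parts need the additional tests of \eqref{eq:error eq - u} with $\eu^k$ and \eqref{eq:error eq - w} with $\ew^k$. The paper tests with $\eu^k-\ew^k$ and $\doteu^k+\ew^k$ to get $(\doteu^k)^\top\tK^k\eu^k+\|\ew^k\|_{\bfK^k}^2$.

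\emph{Second estimate.} Testing the BDF-differentiated equation $\tM^k\bfde_w^k+\tA^k\doteu^k=\bfR_2^k$ with $\ew^k$ and adding \eqref{eq:error eq - u} tested with $\doteu^k$ gives $(\ew^k)^\top\tM^k\bfde_w^k+\|\doteu^k\|_{\tM^k}^2$. This is a $G$-telescoping in the $\tM$-norm, not the $\tA$-norm. Moreover, $(\bfde_w^k)^\top\wt\bfF^k\bfte_w^k$ does not occur there; only the harmless $(\doteu^k)^\top\wt\bfF^k\bfte_w^k$ does.

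\emph{Missing fourth identity.} Two things you need come from an identity absent from your plan: pointwise $\tA$-control of $\ew^n$ (needed for $\|\ew^n\|_{\bfK}$, hence for $\ev^n$), and $\tau\sum_k\|\doteu^k\|_{\tA^k}^2$ (needed, e.g., to pair $\doteu^k$ with $\tM_\ast^k\bfd_w^k$ in the dual norm). Test the differentiated equation with $\doteu^k$ and subtract \eqref{eq:error eq - u} tested with $\bfde_w^k$. This gives $(\bfde_w^k)^\top\tA^k\ew^k+\|\doteu^k\|_{\tA^k}^2=(\doteu^k)^\top\bfR_2^k-(\bfde_w^k)^\top\bfr_1^k$. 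Only here does the critical term $+(\bfde_w^k)^\top\wt\bfF^k\bfte_w^k$ appear.

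This also breaks your absorption step. After Lemma~\ref{Lemma:DahlquistUpper} and telescoping, the boundary terms are bounded by $c_3\sum_{k=n-q+1}^n\|\ew^k\|_{\bfM^k}^2$, which is pointwise in time with an $O(1)$ constant. Your first estimate only controls $\tau\sum_k\|\ew^k\|^2$, which cannot absorb such a term. Gr\"onwall does not help either, since the weight is $O(1)$, not $O(\tau)$.

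The paper absorbs it with the pointwise $\tM$-estimate \eqref{eq:EnergyIII} from the identity in your second estimate, whose right-hand side contains $\ew$ only in time-integrated form. The order of combination is:
\begin{enumerate}
\item Multiply \eqref{eq:EnergyIII} by $2\max\{c_1,c_3\}$; this also absorbs the $c_1\tau\sum\|\doteu^k\|_{\bfM^k}^2$ coming from $(\doteu^k)^\top\bfR_2^k$.
\item Add the fourth identity.
\item Only then add $2c_2$ times the first estimate. This absorbs the pointwise term $c_2\sum_{k=n-q+1}^n\|\eu^k\|_{\bfK^k}^2$ that arises from discrete summation by parts (via $\delta(\zeta)=(1-\zeta)\sigma(\zeta)$) of $(\bfde_w^k)^\top(\bfF(\bftx^k,\bftu^k)-\bfF(\bftx_\ast^k,\bftu_\ast^k))\bftw_\ast^k$ and the analogous $\bff$-term. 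These terms are not covered by the semi-discrete arguments alone.
\end{enumerate}

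Finally, the commutator terms from $\wt\bfF^{k-1}-\wt\bfF^{k}$ cannot be controlled by $W^{1,\infty}$ bounds on discrete derivatives of the numerical $\bftu$. The bootstrap provides none; only the positions get such a bound, through $\dotex=\ev-\bfd_x$. Since $\wt\bfF$ depends on $\nabla\nu_h$, these terms must be bounded as in \eqref{eq:oTau bound F} by $\rho\tau\sum\|\partial^\tau\bfte_u^k\|_{\bfK}^2+c\tau\sum\|\ew^k\|_{\bfM}^2$, using the bootstrap $L^\infty$ bound on $e_w$. The first part is then absorbed through $\tau\sum\|\doteu^k\|_{\bfK^k}^2$ on the left, which again requires the $\tM$- and $\tA$-estimates for $\doteu$ together.
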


In Section~\ref{Sec:Consistency} we will show that, under sufficient smoothness assumptions on the solution, the defects indeed satisfy the bounds
\begin{align}
D_h^n \leq C (h^{2k}+\tau^{2q}).
\end{align}
Hence, condition \eqref{eq:ConditionDefectsSatbility} is also satisfied under the step size restriction $\tau^q \leq C_0 h^k$. 
We note that the error functions $e_x^n, e_v^n \in S_h[\bfx_\ast^n]^3$ and $e_u^n, e_w^n \in S_h[\bfx_\ast^n]^4$ with nodal values $\bfe_x^n, \bfe_v^n$, $\bfe_u^n$ and $\bfe_w^n$, respectively, are then bounded by 
\begin{equation*}
\| e_x^n \|_{H^1(\Ga_h[\bfx_\ast^n])} + \| e_v^n \|_{H^1(\Ga_h[\bfx_\ast^n])} + \| e_u^n \|_{H^1(\Ga_h[\bfx_\ast^n])}  + \| e_w^n \|_{H^1(\Ga_h[\bfx_\ast^n])}\leq C(h^k+\tau^q),
\end{equation*}
for $n\tau \leq T$, provided the starting values are sufficiently accurate.

\begin{proof}
This proof adapts the semi-discrete stability proof of the numerical scheme \eqref{eq:matrix-form-X-v}--\eqref{eq:matrix--vector form} for Willmore flow in \cite[Section~5.4]{KovacsLiLubich2021} to the fully discrete setting. This is done by combining the techniques for evolving surfaces which lead to surface-dependent matrices developed in \cite[Section~10.3]{KovacsLiLubich2019} for the full discretization of mean curvature flow, and the adaptation of the structure of energy estimates developed in \cite[Section~6.4]{BullerjahnKovacs2024} for the full discretization of the Cahn--Hilliard equation with dynamic boundary conditions.

The proof is divided into tree parts. 
In Part (A) we obtain energy estimates for the surface PDEs by testing with the errors and their discrete time derivatives, exploiting the skew-symmetric structure of the error equations \eqref{eq:error eq - u} and \eqref{eq:error eq - w}, and using Dahlquist's $G$-stability theory (Lemma~\ref{Lemma:Dahlquist}) and the multiplier technique of Nevanlinna and Odeh (Lemma~\ref{Lemma:MultiplierTechnique}). The core idea of the proof is presented in the diagram in Figure~\ref{fig:EnergyEstimates}: (i) \& (ii) First an energy estimate for $\bfe_u^n$ is provided which comes with a critical term involving $\bfde_u^n$. (iii) \& (iv) Then in the second energy estimate we use the BDF time derivative of \eqref{eq:error eq - w}, which leads to a bound  for this critical term and for $\bfe_w^n$.
It is in this part, where for one of the estimates, \eqref{eq:Estimate Dr1 4}, we make use of the slightly modified system, Section~\ref{section:modified system} and Remark~\ref{rem:zHChange}, as well as the novel upper bound Lemma~\ref{Lemma:DahlquistUpper}.  	
In Part (B) we establish estimates for the errors in position and velocity by the equations \eqref{eq:error eq - x} and \eqref{eq:error eq - v}. 
Finally, Part (C) combines the final estimates of Parts (A) and (B) to show the stability bound \eqref{eq:StabilityBound}.

In the following $c$ is a generic constant, independent of $h$ and $\tau$, that may take different values on different occurrences. In contrast, constants with a subscript (such as $c_0$) will play a distinctive role in the proof. By $\rho>0$ we denote a small number, independent of $h$ and $\tau$, used in Young's inequalities, and hence we often incorporate multiplicative constants into those yet unchosen factors.

\begin{figure}[htbp]
	\begin{adjustbox}{width=\textwidth}
		\begin{tikzpicture}
			% the two main equations
			% (erroreq1) -- (erroreq2)
			\node at (2.5,5) (erroreq1) {$\tM^n \bfde_u^n - \tA^n \bfe_w^n = \bfr_1^n$};
			\node at (2.35,4.5) (erroreq2) {$\tM^n \bfe_w^n + \tA^n \bfe_u^n = \bfr_2^n$};
			%	\node at (2.5,4) (erroreq1) {$m_h(\dot e_h^u\t, \vphi_h^u) + a_h(e_h^w\t,\vphi_h^u) = \dotsc$};
			%	\node at (2.5,3.5) (erroreq2) {$m_h(e_h^w\t,\vphi_h^w) - a_h(e_h^u\t,\vphi_h^w) = \dotsc$};
			% the two sets of energy estimates
			
			% (energyest1) -- (energyest2)
			\node at (-1.95,-0.345) (energyest1) {\small $\displaystyle \|\bfe^n_u\|_{\bfK(\bfx_\ast^n)}^2 + \tau \! \sum \!\! ~\|\bfe_w^n\|_{\bfK(\bfx_\ast^n)}^2 \lesssim \! \underbrace{\tau \sum \!\! ~\|\dot{\bfe}_u^n\|_{\bfK(\bfx_\ast^n)}^2}_{\text{{\tiny critical term}}} +I_h^{q-1} + D_h^n$};
			\node at (9,-0.1) (energyest2) {\small $\displaystyle \|\bfe_w^n\|_{\bfK(\bfx_\ast^n)}^2 + \tau \! \sum \!\! ~\|\dot{\bfe}_u^n\|_{\bfK(\bfx_\ast^n)}^2 \lesssim I_h^{q-1}  +  D_h^n$};
			
			\pgfmathsetmacro{\yshftii}{-0.75}
			% the derivative and its arrows
			\node at (3,4.5+\yshftii) (dterroreq2) {$\tM^n \dot{\bfe}_w^n + \tA^n \dot{\bfe}_u^n = \bfR_2^n $};
			\draw[-stealth] (2.95,4.3) -- node[right, pos=0.45] {\tiny $\quad \partial^\tau_q$} (3.45,4.7+\yshftii);
			
			% the energy estimates (i)
			\pgfmathsetmacro{\xshfti}{-0.83}
			% arrows to the left, with adding or subtracting
			\draw[-stealth] (1.4+\xshfti,5.05) -- (-0.675+\xshfti,5.05) -- node[below, midway, sloped] {\footnotesize \hspace*{5 mm} test with $\bfe_u^n$} (-0.675+\xshfti,0.5);
			\draw[-stealth] (1.4+\xshfti,4.55) -- (-0.525+\xshfti,4.55) -- node[above, midway, sloped] {\footnotesize \hspace*{2.5 mm} test with $\bfe_w^n$} (-0.525+\xshfti,0.5);
			\draw (-0.6+\xshfti,0.4) node {\tiny $+$};
			
			\draw[-stealth] (1.4+\xshfti,4.95) --  (-2.075+\xshfti,4.95) -- node[below, midway, sloped] {\footnotesize \hspace*{7.5 mm} test with $\bfe_w^n$} (-2.075+\xshfti,0.5);
			\draw[-stealth] (1.4+\xshfti,4.45) --  (-1.925+\xshfti,4.45) -- node[above, midway, sloped] {\footnotesize \hspace*{2.5 mm} test with $\dot{\bfe}_u^n$}  (-1.925+\xshfti,0.5);
			\draw (-2+\xshfti,0.4) node {\tiny $-$};
			
			\draw (-2.1+\xshfti,5.35) node {\tiny (i)};
			\draw (-1.5,5.35) node {\tiny (ii)};
			
			% the energy estimates (ii)
			\pgfmathsetmacro{\xshftii}{2.18}
			\pgfmathsetmacro{\xshftiib}{-0.6}
			% arrows to the right, with adding or subtracting
			\draw[-stealth] (2.2+\xshftii,5.05) -- (7.925,5.05) -- node[below, midway, sloped] {\footnotesize \hspace*{15 mm} test with $\dot{\bfe}_u^n$} (7.925,0.5);
			\draw[-stealth] (5.3,4.55+\yshftii) -- (8.075,4.55+\yshftii) -- node[above, midway, sloped] {\footnotesize  test with $\bfe_w^n$} (8.075,0.5);
			\draw (8,0.4) node {\tiny $+$};
			
			\draw[-stealth] (2.2+\xshftii,4.95) -- (9.925+\xshftiib,4.95) -- node[below, midway, sloped] {\footnotesize \hspace*{15.5 mm} test with $\dot{\bfe}_w^n$} (9.925+\xshftiib,0.5);
			\draw[-stealth] (5.3,4.45+\yshftii) -- (10.075+\xshftiib,4.45+\yshftii) -- node[above, midway, sloped] {\footnotesize \hspace*{0 mm} test with $\dot{\bfe}_u^n $} (10.075+\xshftiib,0.5);
			\draw (10+\xshftiib,0.4) node {\tiny $-$};
			
			\draw (8.1+\xshftiib/2,5.35) node {\tiny (iii)};
			\draw (9.5+\xshftiib/2,5.35) node {\tiny (iv)};

			% stability and arrows
			\draw[draw, align=left] (4.15,-1.1) node {final estimate of Part (A)};
			\draw[draw, align=left] (2.5,-0.1) -- node[above, midway, sloped] {\footnotesize combining (i)--(iv)}  (6,-0.1);
			\draw[-stealth] (4.15,-0.1) -- (4.15,-0.65);
		\end{tikzpicture}
	\end{adjustbox}
	\caption{Sketch of the energy estimates for Part (A) of the fully discrete stability proof, with error equations \eqref{eq:error eq - u} and \eqref{eq:error eq - w}. (Note that, after discrete time differentiation, $\bfR_2^n$ contains more terms than only the discrete time derivative of $\bfr_2^n$.)} \label{fig:EnergyEstimates}
\end{figure}

\emph{(Preparations):} Let $t_{\text{max}} \in (0,T]$ be the maximal time such that the following inequalities holds for $k\tau \leq (n-1)\tau \leq t_{\text{max}}$:
\begin{equation} \label{eq:W1InftyEstimates}
	\begin{aligned}
		\|e_x^k \|_{W^{1,\infty}(\Ga_h[\bfx_\ast^k])} \leq&\ h^{(\kappa-1)/2}, \\
		\|e_v^k \|_{W^{1,\infty}(\Ga_h[\bfx_\ast^k])} \leq&\ h^{(\kappa-1)/2}, \\
		\|e_u^k \|_{W^{1,\infty}(\Ga_h[\bfx_\ast^k])} \leq&\ h^{(\kappa-1)/2}, \\
		\|e_w^k \|_{W^{1,\infty}(\Ga_h[\bfx_\ast^k])} \leq&\ h^{(\kappa-1)/2}, 
	\end{aligned}
\end{equation}
Note that by an inverse inequality, assumption \eqref{eq:AssInitialValues} and $v_h^n = \widetilde I_h(V_h^n \nu_h^n)$, these estimates are satisfied at least for $(q-1)\tau \leq t_{\text{max}}$. We first prove the stated bounds for $(n-1)\tau \leq t_{\text{max}}$ and at the end of the proof argue by a bootstrapping argument that in fact $t_{\text{max}}$ coincides with $T$.

The estimate on the position errors $\bfe_x^k$ for $k\tau \leq (n-1)\tau \leq t_{\text{max}}$ in \eqref{eq:W1InftyEstimates}, the $W^{1,\infty}$ bound on $v_h^n$ and the fact that the extrapolation is a bounded operator, immediately imply that, by the results of Section~\ref{section:aux} and the regularity of the exact solution, the norms corresponding to the positional vectors $\bftx^j$, $\bftx_\ast^j$ and $\bfx_\ast^i$, for $q\leq j \leq n$ and $0\leq i \leq n$, are $h$-uniformly equivalent (for sufficiently small $h\leq h_0$ and $\tau \leq \tau_0$). Additionally we are in a position to use the matrix difference bounds \eqref{matrix difference bounds} and \eqref{matrix difference bounds e_x}. 

In particular, for $\widetilde V_h^n \in S_h[\widetilde \bfx^n]$, the finite element function on $\Ga_h[\bftx^n]$ with nodal vector $\widetilde \bfV^n = \partial^\tau \bftx^n$, we have by the exact same argument as in \cite[equation~(10.10)]{KovacsLiLubich2019}:
\begin{align}
	\| \widetilde V_h^n\|_{W^{1,\infty}(\Ga_h[\bftx^n])} \leq K,
\end{align}
with a constant $K$ independent of $h$ and $\tau$. This results, by $\bftx^n-\bftx^{n-1}=\tau \widetilde \bfV^n$ and using \eqref{matrix difference bounds}, that
\begin{equation}
	\label{eq:oTau bound}
	\begin{aligned}
		\bfw^\top (\tM^n-\tM^{n-1}) \bfz \leq &\ c \, \tau \, \|\bfw\|_{\bfM^n} \|\bfz\|_{\bfM^n} , \\
		\bfw^\top (\tA^n-\tA^{n-1}) \bfz \leq &\ c \, \tau \, \|\bfw\|_{\bfA^n} \|\bfz\|_{\bfA^n} ,
	\end{aligned}
\end{equation}
for arbitrary vectors $\bfw$ and $\bfz$, and $\tau \leq \tau_0$ sufficiently small. 

Next we will obtain a similar estimate for the non-linear part involving $\bfF$, using local Lipschitz continuity of $F$, denoting the non-linearity inside the integral, cf. \eqref{eq:alpha mass matrix - F_1}, \eqref{eq:alpha mass matrix - F_2} and \eqref{eq:bfF definition}, and \eqref{eq:W1InftyEstimates} for the terms with $\bftu$. Following the semi-discrete arguments in \cite[equation~(5.35)]{KovacsLiLubich2021}, adapted to the fully discrete setting by a combination of the arguments in \cite[equation~(6.28)]{BullerjahnKovacs2024} and \cite[equation~(10.27)]{KovacsLiLubich2019}, we first establish, for $1\leq l \leq n$, and by bounding $z_h$ in $L^\infty$:

\begin{align}
	\bfw^\top \partial^\tau \big(\bfF(\bftx^l,\bftu^l) - \bfF(\bftx_\ast^l,\bftu_\ast^l)\big) \bfz \leq c \, \| z_h\|_{L^\infty} \| \bfw \|_{\bfM^l} \big(&\| \partial^\tau \bfte_u^l \|_{\bfK^{l}} + \| \bfte_u^l \|_{\bfK^{l}} + \| \bfte_u^{l-1} \|_{\bfK^{l-1}} \nonumber \\
	&\ + \|\partial^\tau \bfte_x^l \|_{\bfM^l} + \| \bfte_x^l \|_{\bfK^l} + \| \bfte_x^{l-1} \|_{\bfM^{l-1}}\big). \label{eq:dG Est inText}
\end{align} 

We define $\bftu_\theta^l:= \bftu_\ast^l + \theta \bfte_u^l$ for $\theta \in [0,1]$. Then we have 
\begin{align}
	\bfw^\top \partial^\tau \big(\bfF(\bftx^l,\bftu^l) - \bfF(\bftx_\ast^l,\bftu_\ast^l)\big) \bfz =&\ \int_0^1 \frac{\dif}{\dif \theta} \frac1\tau (\bfw)^\top (\tM^l - \tM^{l-1}) F(\bftu_\theta^l,\nabla_{\Ga_h} \bftu_\theta^l) \bfz \dif \theta \nonumber \\
	& + \int_0^1 \frac{\dif}{\dif\theta} \bfw^\top \tM^{l-1} \partial^\tau F(\bftu_\theta^l,\nabla_{\Ga_h} \bftu_\theta^l) \bfz\dif \theta \nonumber \\
	& + \bfw^\top \partial^\tau \big(\tM^l - \tM_\ast^l\big) F(\bftu_\ast^l,\nabla_{\Ga_h} \bftu_\ast^l) \bfz\nonumber \\
	& + \bfw^\top \big(\tM^l - \tM_\ast^l\big) \partial^\tau F(\bftu_\ast^l,\nabla_{\Ga_h} \bftu_\ast^l) \bfz\nonumber \\
	=:&\ (I) + (II) + (III) + (IV). \label{eq:dg Est tested inText}
\end{align}

We estimate the first term using estimate \eqref{eq:oTau bound} and local Lipschitz continuity of $g$ and \eqref{eq:W1InftyEstimates}, as
\begin{align}
	(I) =&\  \int_0^1 \frac1\tau \bfw^\top (\tM^l - \tM^{l-1}) \Big(\partial_1 F(\bftu_\theta^l,\nabla_{\Ga_h} \bftu_\theta^l) \bfte_u^l + \partial_2 F(\bftu_\theta^l,\nabla_{\Ga_h} \bftu_\theta^l) \nabla_{\Ga_h} \bfte_u^l\Big) \bfz\dif \theta \nonumber \\
	\leq&\ c \, \| z_h\|_{L^\infty} \| \bfw \|_{\bfM^l} \| \bfte_u^l \|_{\bfK^l}. \label{eq:dg Est 1 inText}
\end{align}

The second term is estimated by local Lipschitz continuity of $g$ and its derivatives and \eqref{eq:W1InftyEstimates}, to
\begin{align}
	(II) =&\ \int_0^1 \bfw^\top \tM^{l-1} \partial^\tau \Big(\partial_1 F(\bftu_\theta^l,\nabla_{\Ga_h} \bftu_\theta^l) \bfte_u^l + \partial_2 F(\bftu_\theta^l,\nabla_{\Ga_h} \bftu_\theta^l) \nabla_{\Ga_h} \bfte_u^l\Big) \bfz \dif \theta \nonumber \\
	=&\ \int_0^1 \bfw^\top \tM^{l-1} \Big(\partial_1 F(\bftu_\theta^{l-1},\nabla_{\Ga_h} \bftu_\theta^{l-1}) \partial^\tau \bfte_u^l + \partial^\tau \partial_1 F(\bftu_\theta^l,\nabla_{\Ga_h} \bftu_\theta^l)  \bfte_u^l\Big) \bfz \dif \theta \nonumber \\
	& + \int_0^1 \bfw^\top \tM^{l-1} \Big(\partial_2 F(\bftu_\theta^l,\nabla_{\Ga_h} \bftu_\theta^l) \nabla_{\Ga_h}\partial^\tau \bfte_u^l + \partial^\tau \partial_2 F(\bftu_\theta^l,\nabla_{\Ga_h} \bftu_\theta^l)  \nabla_{\Ga_h} \bfte_u^{l-1}\Big) \bfz \dif \theta\nonumber \\
	\leq&\ c \, \| z_h\|_{L^\infty} \| \bfw \|_{\bfM^{l-1}} \big(\| \partial^\tau \bfte_u^l \|_{\bfK^{l-1}} + \| \bfte_u^l \|_{\bfK^{l-1}} + + \| \bfte_u^{l-1} \|_{\bfK^{l-1}} \big). \label{eq:dg Est 2 inText}
\end{align}

The arguments from \cite[equation~(10.21)]{KovacsLiLubich2019}, for $\tM$ instead of $\tA$, yield the estimate
\begin{align}
	(III) \leq&\  c \, \| z_h\|_{L^\infty} \| \bfw \|_{\bfM^l} \big(\|\partial^\tau \bfte_x^l \|_{\bfM^l} + \| \bfte_x^l \|_{\bfM^l} + \| \bfte_x^{l-1} \|_{\bfM^{l-1}}\big). \label{eq:dg Est 3 inText}
\end{align}

Finally, the fourth term is estimated, using \eqref{matrix difference bounds e_x} and the regularity of the exact solution, by
\begin{equation}
	(IV) \leq c \, \| z_h\|_{L^\infty} \| \bfw \|_{\bfM^l} \| \bfte_x^l \|_{\bfA^l}. \label{eq:dg Est 4 inText}
\end{equation}

Inserting the estimates \eqref{eq:dg Est 1 inText}, \eqref{eq:dg Est 2 inText}, \eqref{eq:dg Est 3 inText}, and \eqref{eq:dg Est 4 inText} into \eqref{eq:dg Est tested inText}, and using the norm equivalence \eqref{norm-equiv}, we obtain the bound \eqref{eq:dG Est inText}.

Furthermore, we obtain the following analogous result to \eqref{eq:oTau bound}, using the regularity of the exact solution, and changing to the $W^{1,\infty}$ norm in $\bfe_x$ and $\bfe_u$, if it is possible by \eqref{eq:W1InftyEstimates}:
\begin{align}
	\bfw^\top (\bfF(\bftx^n,\bftu^{n})-\bfF(\bftx^{n-1},\bftu^{n-1})) \bfz =&\ \tau \, \bfw^\top \partial^\tau(\bfF(\bftx^n,\bftu^{n}) - \bfF(\bftx_\ast^n,\bftu_\ast^{n})) \bfz \nonumber \\
	&\ + \tau \, \bfw^\top \partial^\tau\bfF(\bftx_\ast^n,\bftu_\ast^{n}) \bfz \nonumber \\
	\leq&\ c \, \tau \| \bfw \|_{\bfM^n} \big(\| \partial^\tau \bfte_x^n \|_{\bfK^n} + \|  \partial^\tau \bfte_u^n \|_{\bfK^n} + \|\bfz \|_{\bfM^n} \big). \label{eq:oTau bound F} 
\end{align}
Note that we still need a $L^\infty$-bound for $z_h$, which is possible in all applications, because of \eqref{eq:W1InftyEstimates}.

Finally, we have the following estimate for the non-linear part, similar to \eqref{matrix difference bounds}, where Lipschitz continuity of $F$ together with \eqref{eq:W1InftyEstimates} and \eqref{matrix difference bounds e_x}, yield
\begin{align}
	\bfw^\top (\bfF(\bftx^n,\bftu^{n})-\bfF(\bftx_\ast^{n},\bftu_\ast^{n})) \bfz =&\ \bfw^\top \tM^n \Big[\big(F(\bftu^n,\nabla_{\Ga_h} \bftu^{n})-F(\bftu_\ast^{n},\nabla_{\Ga_h} \bftu_\ast^{n})\big) \bfz\Big] \nonumber \\
	&\ + \bfw^\top (\tM^n - \tM_\ast^n)\Big[F(\bftu_\ast^n,\nabla_{\Ga_h}\bftu_\ast^{n}) \bfz\Big] \nonumber \\
	\leq&\ c \, \| \bfw \|_{\bfM^n} \big( \| \bfte_u^n \|_{\bfK^n} + \| \bfte_x^n \|_{\bfA^n} \big).\label{eq:non-linear matrix difference}
\end{align}

\emph{(A) Estimates for the surface PDEs:} 

(A.1) We start by the energy estimates (i) \& (ii) in Figure~\ref{fig:EnergyEstimates}. Let $q+1\leq k \leq n$, such that $(n-1)\tau \leq t_{\text{max}}$, test $\eqref{eq:error eq - w}^k$ (where the superscript $k$ denotes in which time step we consider the equation) with $\bfde_u^k + \bfe_w^k$, and add $\eqref{eq:error eq - u}^k$ tested with $\bfe_u^k - \bfe_w^k$, to obtain
\begin{align}
	(\bfde_u^k)^\top \tK^k\bfe_u^k + \| \bfe_w^k \|_{\bfK^k}^2 =&\ (\bfde_u^k)^\top \bfr_2^k + (\bfe_w^k)^\top \bfr_2^k + (\bfe_u^k)^\top \bfr_1^k - (\bfe_w^k)^\top \bfr_1^k \label{eq:EnergyITested}
\end{align}   
The left-hand side is estimated by the combination of Lemma~\ref{Lemma:Dahlquist} and Lemma~\ref{Lemma:MultiplierTechnique} (note here that in the present case $q=1,2$, the multiplier $\eta_q$ vanishes), with the symmetric positive definite matrix $G$ and $\bfE_u^k = (\bfe_u^k,\dots,\bfe_u^{k-q+1})$, as
\begin{align*}
	\frac1\tau \Big(|\bfE_u^k|_{G,\bfK^k}^2 - |\bfE_u^{k-1}|_{G,\bfK^k}^2\Big) \leq (\bfde_u^k)^\top \tK^k\bfe_u^k,
\end{align*}
and summing over $q+1 \leq k \leq n$ and multiplying by $\tau$, see the arguments following \cite[equation~(10.29)]{KovacsLiLubich2019}, and using \eqref{eq:GnormEst}, yields the bound
\begin{align}
	\tau \sum_{k=q+1}^n (\bfde_u^k)^\top \tK^k\bfe_u^k \geq \frac12 \lambda_0 \sum_{j=0}^{q-1} \| \bfe_u^{n-j} \|_{\bfK^{n-j}}^2 - c \sum_{j=1}^q \| \bfe_u^i \|_{\bfK^i}^2 - c \, \tau \sum_{k=q+1}^{n-1} \| \bfe_u^k \|_{\bfK^k}^2. \label{eq:EnergyILHS}
\end{align}
While the terms on the right-hand side are estimated separately: We consider $\bfr_1^k$ multiplied with the place-holder $\bfe^k$, and summing over $q+1 \leq k \leq n$ and multiplying by $\tau$, which yields
\begin{align*}
	\tau \sum_{k=q+1}^n (\bfe^k)^\top \bfr_1^k =&\ \tau \sum_{k=q+1}^n \Big( -(\bfe^k)^\top(\tM^k - \tM_\ast^k) \bfdu_\ast^k + (\bfe^k)^\top(\tA^k - \tA_\ast^k) \bfw_\ast^k - (\bfe^k)^\top \bfF(\bftx^k,\bftu^k) \bfte_w^k \nonumber \\
	&\ - (\bfe^k)^\top (\bfF(\bftx^k,\bftu^k)-\bfF(\bftx_\ast^k,\bftu_\ast^k))\bftw_\ast^k + (\bfe^k)^\top (\bff(\bftx^k,\bftu^k)-\bff(\bftx_\ast^k,\bftu_\ast^k)) \nonumber \\
	&\ - (\bfe^k)^\top \tM_\ast^k \bfd_u^k \Big)\nonumber \\
	&\ =: (I) + (II) + (III) + (IV) + (V) + (VI).
\end{align*}
We follow the lines of the semi-discrete proof \cite[equations~(5.23)--(5.26)]{KovacsLiLubich2021} adapted to the fully discrete case by the techniques developed in \cite[equations~(10.18),(10.27)--(10.28)]{KovacsLiLubich2019}. We estimate the terms $(I)$ and $(II)$ by \eqref{matrix difference bounds e_x}, with the help of \eqref{eq:W1InftyEstimates}, $(III)$ by \eqref{eq:W1InftyEstimates}, Cauchy--Schwarz and Young's inequality, and $(VI)$ by the defect norm and Young's inequality. In order to bound the terms $(IV)$ and $(V)$ we use local Lipschitz continuity and the bound \eqref{eq:non-linear matrix difference}, exactly as in \cite[equations~(10.27)]{KovacsLiLubich2019}, to finally obtain
\begin{align}
	\tau \sum_{k=q+1}^n (\bfe^k)^\top \bfr_1^k \leq&\ \rho \tau \sum_{k=q+1}^n \| \bfe^k \|_{\bfK^k}^2 + c \ \epsilon_h^n + c \, D_h^n, \label{eq:EstRHS1}
\end{align}
where we collect terms to be estimated later by Gr\"onwall's inequality and the equations \eqref{eq:error eq - x} and \eqref{eq:error eq - v}, the initial values and terms for $k=q$ (already estimated by \eqref{eq:qest}), in
\begin{align}
	\epsilon_h^n:=& \tau \sum_{k=q+1}^n \|\bfe_w^k \|_{\bfK^k}^2 +  \tau \sum_{k=q+1}^{n} \|\bfe_u^k \|_{\bfK^k}^2 + \tau \sum_{k=q}^n \|\bfe_x^k \|_{\bfK^k}^2 + \tau \sum_{k=q}^n \| \bfe_v^k \|_{\bfK^k}^2 + I_h^{q-1} + D_h^q. \label{eq:DefEpsilon}
\end{align}

In the same way, with even simpler terms, we obtain for the other non-linear term $\bfr_2^k$ the exact same estimate
\begin{align}
	\tau \sum_{k=q+1}^n (\bfe^k)^\top \bfr_2^k \leq&\ \rho \tau \sum_{k=q+1}^n \| \bfe^k \|_{\bfK^k}^2 + c \, \epsilon_h^n + c \, D_h^n. \label{eq:EstRHS2}
\end{align}

%
%\begin{align}
%	(\bfde_u^k)^\top(\bfg(\bftx^k,\bftu^k) - \bfg(\bftx_\ast^k,\bftu_\ast^k)) \leq&\ \rho \| \bfde_u^k \|_{\bfM^k}^2 + c \, \| \bfte_x^k \|_{\bfK^k}^2 + c \, \| \bfte_u^k \|_{\bfK^k}^2, \label{eq:EnergyIRHS3}
%\end{align}
%by \eqref{eq:alpha mass matrix - F_1} and the bounds \eqref{eq:W1InftyEstimates},		
%\begin{align}
%	(\bfe_w^k)^\top \bfF(\bftx^k,\bftu^k)\bfe_w^k \leq&\ c \, \|\bfe_w^k \|_{\bfM^k}^2 \label{eq:EnergyIRHS4}
%\end{align}
%and finally the terms involving the defects by the dual norm, Young's inequality and the norm equivalence, as
%\begin{align}
%	(\bfde_u^k)^\top \bfM_\ast^k \bfd_w^k + (\bfde_u^k)^\top \bfvartheta^k \leq&\ \rho \| \bfde_u^k \|_{\bfK^k}^2 + c \, \| \bfd_w^k \|_{\ast,\bftx_\ast^k}^2 + c \, \| \bfd_w^{q-1} \|_{\ast,\bftx_\ast^{q-1}}^2. \label{eq:EnergyIRHS5}
%\end{align}

Altogether, the combination of \eqref{eq:EnergyITested} with the above bounds \eqref{eq:EnergyILHS}, \eqref{eq:EstRHS1} and \eqref{eq:EstRHS2}, the estimate for the case $n=q$ in \eqref{eq:qest}, and the norm equivalence yields the final energy estimate for the branch on the left-hand side of Figure~\ref{fig:EnergyEstimates}:
\begin{align}
	\sum_{k=n-q+1}^n \| \bfe_u^k \|_{\bfK^k}^2 + \tau \sum_{k=q+1}^n \| \bfe_w^k \|_{\bfK^k}^2 \leq&\ \rho \tau \sum_{k=q+1}^n \|\bfde_u^k \|_{\bfK^k}^2 + c \, \epsilon_h^n + c \, D_h^n  \label{eq:EnergyA.1}
\end{align}

\textit{(The special case $n=q$)} For $n=q$ it remains to show an estimate analogous to the final estimate in part (A). By slightly modifying the argument for the general case we obtain
\begin{align}
	\label{eq:qest}
	\| \bfe_u^q \|_{\bfK^q}^2 + \| \bfe_w^q \|_{\bfK^q}^2 + \tau \| \bfde_u^q \|_{\bfK^q}^2  \leq&\  c \, \tau \|\bfe_x^q \|_{\bfK^q}^2 + c\tau \| \bfe_v^q \|_{\bfK^q}^2 + c \, I_h^{q-1} + c \, D_h^q .
\end{align}	
The difference to the general case is that there is no equation for $n-1=q-1$. We solve this problem by adding the missing terms on both sides, so that we are able use Dahlquist's $G$-stability result, Lemma~\ref{Lemma:Dahlquist}, and the multiplier technique of Nevanlinna and Odeh, Lemma~\ref{Lemma:MultiplierTechnique}, these terms involve only initial values and therefore we estimate them directly.

(A.2) We will now perform energy estimates (iii) sketched in the branch on the right-hand side of Figure~\ref{fig:EnergyEstimates}. Following the semi-discrete proof in \cite[Section~5.4]{KovacsLiLubich2021} we differentiate the second equation, but now discretely in time. This is done in the same way as in \cite[Section~6.4]{BullerjahnKovacs2024}: Let again $q+1\leq k \leq n$, with $(n-1)\tau \leq t_{\text{max}}$. 

For $k\geq 2q$, we consider the linear combination of \eqref{eq:error eq - w}, for $k-i=k-q,\dots,k$, weighted by $\delta_i/\tau$, which gives
\begin{align*}
	\tM^k \bfde_w^k - \tA^k \bfde_u^k = \bfR_2^k, \qquad k\geq 2q,
\end{align*}
with 
\begin{align*}
	\bfR_2^k = \frac1\tau \sum_{i=0}^q \delta_i (\tM^k - \tM^{k-i})\bfe_w^{k-i} + \frac1\tau \sum_{i=0}^q \delta_i (\tA^k - \tA^{k-i})\bfe_u^{k-i} + \frac1\tau \sum_{i=0}^q \delta_i \bfr_2^{k-i} .
\end{align*}

For $k<2q$ we have to use a different equation, since then in the discrete derivative the initial values are starting to appear, and for the initial values we do not have the equation \eqref{eq:error eq - w}. So instead we have to add the terms involving the initial values on both sides. For convenience of notation we define
\begin{align*}
	\bfr_2^i := \tM^i \bfe_w^i + \tA^i \bfe_u^i, \forquad i=0,\dots,q-1.
\end{align*} 
This allows us to write the differentiated second equation the following compact form, for $q+1\leq k <2q$: 
\begin{align}
	\tM^k \bfde_w^k - \tA^k \bfde_u^k = \bfR_2^k \label{eq:error Dw}
\end{align}

In order to obtain the estimate (iii) we now test $\eqref{eq:error eq - u}^k$ with $\bfde_u^k$ and add $\eqref{eq:error Dw}^k$ tested with $\bfe_w^k$, which yields:
\begin{align*}
	(\bfe_w^k)^\top \tM^k \bfde_w^k + \| \bfde_u^k \|_{\bfM^k}^2 = (\bfde_u^k)^\top\bfr_1^k + (\bfe_w^k)^\top \bfR_2^k.
\end{align*}

For the left-hand side, we again use a combination of Lemma~\ref{Lemma:Dahlquist} and Lemma~\ref{Lemma:MultiplierTechnique}, to obtain as in \eqref{eq:EnergyILHS}, after summing over $q+1 \leq k \leq n$ and multiplying by $\tau$,
\begin{align*}
	\sum_{k=n-q+1}^n \| \bfe_w^k \|_{\bfM^k}^2 + \tau \sum_{k=q+1}^n \| \bfde_u^k \|_{\bfM^k}^2 \leq&\ c \, \tau \sum_{k=q+1}^n \Big( (\bfde_u^k)^\top\bfr_1^k + (\bfe_w^k)^\top \bfR_2^k\Big) + c \, \epsilon_h^n,
\end{align*}
and with \eqref{eq:EstRHS2}, we arrive at
\begin{align}
	\sum_{k=n-q+1}^n \| \bfe_w^k \|_{\bfM^k}^2 + \tau \sum_{k=q+1}^n \| \bfde_u^k \|_{\bfM^k}^2 \leq&\ c \, \tau \sum_{k=q+1}^n (\bfe_w^k)^\top \bfR_2^k + \rho \tau \sum_{k=q+1}^n \|\bfde_u^k\|_{\bfK^k}^2 +  c \, \epsilon_h^n + c \, D_h^n. \label{eq:EnergyIII before R2 estimate}
\end{align}

The only term left to be estimated is the differentiated right-hand side $\bfR_2^k$, here we again use a place holder $\bfe^k$ to estimate in general:
\begin{align}
	\tau \sum_{k=q+1}^n (\bfe^k)^\top \bfR_2^k =&\ \tau \sum_{k=q+1}^n \sum_{i=0}^q \frac1\tau \delta_i \Big[ -(\bfe^k)^\top(\tM^k - \tM^{k-i}) \bfe_w^{k-i} - (\bfe^k)^\top(\tA^k - \tA^{k-i}) \bfe_u^{k-i} \nonumber \\
	&\ \hphantom{\tau \sum_{k=q+1}^n \sum_{i=0}^q \frac1\tau \delta_i }- (\bfe^k)^\top(\tM^{k-i} - \tM_\ast^{k-i}) \bfw_\ast^{k-i} - (\bfe^k)^\top(\tA^{k-i} - \tA_\ast^{k-i}) \bfu_\ast^{k-i}  \nonumber \\
	&\ \hphantom{\tau \sum_{k=q+1}^n \sum_{i=0}^q \frac1\tau \delta_i }+ (\bfe^k)^\top (\bfg(\bftx^{k-i},\bftu^{k-i}) - \bfg(\bftx_\ast^{k-i},\bftu_\ast^{k-i})) \nonumber \\
	&\ \hphantom{\tau \sum_{k=q+1}^n \sum_{i=0}^q \frac1\tau \delta_i }- (\bfe^k)^\top \tM_\ast^{k-i} \bfd_w^{k-i} + (\bfe^k)^\top \bfvartheta^{k-i}  \Big] \nonumber \\
	=:&\ (I) + (II) + (III) + (IV). \label{eq:Estimate R2 tested}
\end{align}

We estimate the first term $(I)$ by \eqref{eq:oTau bound} and Young's inequality: 
\begin{align}
	\tau \sum_{k=q+1}^n \sum_{i=0}^q \frac1\tau \delta_i (\bfe^k)^\top(\tM^k - \tM^{k-i}) \bfe_w^{k-i} + (\bfe^k)^\top(\tA^k - \tA^{k-i}) \bfe_u^{k-i} \leq \rho \tau \sum_{k=q+1}^n \| \bfe^k \|_{\bfK^k}^2 + c \, \epsilon_h^n. \label{eq:Estimate R2 1}
\end{align}

By the factorization $\delta(\zeta) = (1-\zeta) \sigma(\zeta)$ of the generating polynomial $\delta$ of the backwards difference, cf. \eqref{eq:BDF generating function}, see \cite[equation~(10.12)--(10.14)]{KovacsLiLubich2019}, and the estimates \eqref{eq:W1InftyEstimates}, \cite[equation~(10.21)]{KovacsLiLubich2019} and \eqref{eq:oTau bound}, we obtain for the first term in $(II)$
\begin{align}
	\tau \sum_{k=q+1}^n \sum_{i=0}^q \frac1\tau \delta_i (\bfe^k)^\top(\tM^{k-i} - \tM_\ast^{k-i}) \bfw_\ast^{k-i}  =&\ \tau \sum_{k=q+1}^n \sum_{i=0}^{q-1} \sigma_i (\bfe^k)^\top\partial^\tau(\tM^{k-i} - \tM_\ast^{k-i}) \bfw_\ast^{k-i}  \nonumber \\
	&\ \hphantom{\sum_{k=q+1}^n\sum_{i=0}^q }+  \sigma_i (\bfe^k)^\top(\tM^{k-i} - \tM_\ast^{k-i}) \partial^\tau \bfw_\ast^{k-i} \nonumber \\
	\leq&\ \rho \tau \sum_{k=q+1}^n \| \bfe^k \|_{\bfK^k}^2 + c \, \epsilon_h^n, \label{eq:Estimate R2 2}
\end{align}
where we have additionally used a slight modification of \cite[equation~(10.32)]{KovacsLiLubich2019}, obtained from the estimates leading up to the cited one:
\begin{align}
	\tau \sum_{k=q}^n \|\partial^\tau \bfe_x^k \|_{\bfK^n}^2 \leq c \, \tau \sum_{k=q}^n \|\bfde_x^n\|_{\bfK^k}^2 + c \, \tau \sum_{i=1}^{q-1}  \|\partial^\tau \bfe_x^i \|_{\bfK^k}^2 \leq c \, \epsilon_h^n. \label{eq:Partial by BDFq}
\end{align}
The second term in $(II)$ can be estimated in exactly the same way.

The term $(IV)$ is estimated by reformulating the backward difference and via the dual norms and again \eqref{eq:oTau bound}, as 
\begin{align}
	\tau \sum_{k=q+1}^n \sum_{i=0}^q \frac1\tau \delta_i (\bfe^k)^\top \tM_\ast^{k-i} \bfd_w^{k-i} + (\bfe^k)^\top \bfvartheta^{k-i} \leq \rho \tau \sum_{k=q+1}^n \| \bfe^k \|_{\bfK^k}^2 + c \, \epsilon_h^n + c \, D_h^n \label{eq:Estimate R2 3}
\end{align}
note that the backward difference of the defect term $\bfd_w$ enters in $D_h^n$, and that $\partial^\tau \bfvartheta^k=0$ for $k\geq q$, see \eqref{eq:DefVartheta}.

In order to estimate the remaining non-linear term $(III)$ we follow the semi-discrete arguments in \cite[equation~(5.35)]{KovacsLiLubich2021}, adapted to the fully discrete setting as in \eqref{eq:dG Est inText}, with again the factorization $\delta(\zeta) = (1-\zeta) \sigma(\zeta)$ of the generating polynomial $\delta$ of the backward difference. This leads to the estimate
\begin{align}
	\tau \sum_{k=q+1}^n \sum_{i=0}^q \frac1\tau \delta_i (\bfe^k)^\top (\bfg(\bftx^{k-i},\bftu^{k-i}) - \bfg(\bftx_\ast^{k-i},\bftu_\ast^{k-i})) \leq&\  c \, \tau \sum_{k=q+1}^n \|\bfe^k\|_{\bfM^k} \| \bfde_u^k\|_{\bfK^k} \nonumber \\
	&\ + \rho \tau \sum_{k=q+1}^n \| \bfe^k \|_{\bfK^k}^2 + c \, \epsilon_h^n. \label{eq:Estimate R2 4}
\end{align}

Inserting now the estimates \eqref{eq:Estimate R2 1}, \eqref{eq:Estimate R2 2}, \eqref{eq:Estimate R2 3}, and \eqref{eq:Estimate R2 4} into \eqref{eq:Estimate R2 tested}, we arrive at
\begin{align}
	\tau \sum_{k=q+1}^n (\bfe^k)^\top \bfR_2^k \leq c \, \tau \sum_{k=q+1}^n \|\bfe^k\|_{\bfM^k} \| \bfde_u^k\|_{\bfK^k} + \rho \tau \sum_{k=q+1}^n \| \bfe^k \|_{\bfK^k}^2 + c \, \epsilon_h^n + c \, D_h^n. \label{eq:Estimate R2}
\end{align}

Using this estimate and Young's inequality, together with \eqref{eq:EnergyIII before R2 estimate}, we arrive at the third energy estimate
\begin{align}
	\sum_{k=n-q+1}^n \| \bfe_w^k \|_{\bfM^k}^2 + \tau \sum_{k=q+1}^n \| \bfde_u^k \|_{\bfM^k}^2 \leq&\ \rho \tau \sum_{k=q+1}^n \|\bfde_u^k\|_{\bfK^k}^2 +  c \, \epsilon_h^n + c \, D_h^n. \label{eq:EnergyIII}
\end{align}

For the energy estimate (iv) of Figure~\ref{fig:EnergyEstimates}, we test $\eqref{eq:error Dw}^k$ with $\bfde_u^k$, and subtract $\eqref{eq:error eq - u}^k$ tested with $\bfde_w^k$, sum over $q+1 \leq k \leq n$ and multiply by $\tau$, use again Lemma~\ref{Lemma:Dahlquist}, Lemma~\ref{Lemma:MultiplierTechnique} and then we arrive as in \eqref{eq:EnergyILHS}, at
\begin{align*}
	\sum_{k=n-q+1}^n \| \bfe_w^k \|_{\bfA^k}^2 + \tau \sum_{k=q+1}^n \| \bfde_u^k \|_{\bfA^k}^2 \leq&\ -\tau \sum_{k=q+1}^n (\bfde_w^k)^\top \bfr_1^k + \tau \sum_{k=q+1}^n (\bfde_u^k)^\top \bfR_2^k + c \, \epsilon_h^n.
\end{align*}
Using here estimate \eqref{eq:Estimate R2} and Young's inequality, we obtain
\begin{align}
	\sum_{k=n-q+1}^n \| \bfe_w^k \|_{\bfA^k}^2 + \tau \sum_{k=q+1}^n \| \bfde_u^k \|_{\bfA^k}^2 \leq&\ -\tau \sum_{k=q+1}^n (\bfde_w^k)^\top \bfr_1^k + c_1 \tau \sum_{k=q+1}^n \|\bfde_u^k\|_{\bfM^k}^2 \nonumber \\
	&\ + \rho \tau \sum_{k=q+1}^n \|\bfde_u^k\|_{\bfK^k}^2 + c \, \epsilon_h^n + c \, D_h^n. \label{eq:Energy4 tested}
\end{align}

To estimate the remaining term on the right-hand side, involving $\bfde_w^k$, we aim to transfer the derivative onto $\bfr_1^k$ by a discrete product rule, compare to the energy estimates (IV) in \cite{KovacsLiLubich2021} and \cite{BullerjahnKovacs2024}. We have
\begin{align}
	-\tau \sum_{k=q+1}^n (\bfde_w^k)^\top \bfr_1^k =&\ \tau \sum_{k=q+1}^n (\bfde_w^k)^\top (\tM^k-\tM_\ast^k) \bfdu_\ast^k - (\bfde_w^k)^\top (\tA^k-\tA_\ast^k) \bfw_\ast^k \nonumber \\
	&\ \hphantom{\sum_{k=q+1}^n}+ (\bfde_w^k)^\top \bfF(\bftx^k,\bftu^k) \bfte_w^k + (\bfde_w^k)^\top (\bfF(\bftx^k,\bftu^k)-\bfF(\bftx_\ast^k,\bftu_\ast^k)) \bftw_\ast^k \nonumber \\
	&\ \hphantom{\sum_{k=q+1}^n}-  (\bfde_w^k)^\top (\bff(\bftx^k,\bftu^k)-\bff(\bftx_\ast^k,\bftu_\ast^k)) + (\bfde_w^k)^\top \tM_\ast^k \bfd_u^k \nonumber \\
	=:&\ (I) + (II) + (III) + (IV) + (V) + (VI). \label{eq:Estimate Dr1 tested}
\end{align}

The terms $(I)$ and $(II)$ are estimated as in \cite[Proposition~10.1, Part(A.iv)]{KovacsLiLubich2019}, by
\begin{align}
	%\tau \sum_{k=q+1}^n (\bfde_w^k)^\top (\tM^k-\tM_\ast^k) \bfdu_\ast^k - (\bfde_w^k)^\top (\tA^k-\tA_\ast^k) \bfw_\ast^k 
	(I) + (II) \leq \rho \sum_{k=n-q+1}^n \| \bfe_w^k\|_{\bfK^k}^2 + c \sum_{k=n-q+1}^n \| \bfe_x^k\|_{\bfK^k}^2 + c \, \epsilon_h^n \leq \rho \sum_{k=n-q+1}^n \| \bfe_w^k\|_{\bfK^k}^2 + c \, \epsilon_h^n, \label{eq:Estimate Dr1 1}
\end{align}
where in the last term we used the energy estimate for $\bfe_x$, see \eqref{eq:EnergyBx}.

The term $(IV)$, and similarly $(V)$, is estimated using the factorization $\delta(\zeta) = (1-\zeta) \sigma(\zeta)$ of the generating polynomial $\delta$ of the backwards difference, cf. \eqref{eq:BDF generating function}, see \cite[equation~(10.12)--(10.14)]{KovacsLiLubich2019}, the discrete product rule and \eqref{eq:dG Est inText}, to
\begin{align}
	%\tau \sum_{k=q+1}^n (\bfde_w^k)^\top \big(\bfF(\bftx^k,\bftu^k) - \bfF(\bftx_\ast^k,\bftu_\ast^k)\big) \bftw_\ast^k 
	(IV) =&\ \tau \sum_{k=q+1}^n \sum_{i=0}^{q-1} \sigma_i \partial^\tau \Big((\bfe_w^{k-i})^\top \big(\bfF(\bftx^k,\bftu^k) - \bfF(\bftx_\ast^k,\bftu_\ast^k)\big) \Big)\bftw_\ast^k \nonumber \\
	&\ + \sigma_i (\bfe_w^{k-i-1})^\top \partial^\tau \big(\bfF(\bftx^k,\bftu^k) - \bfF(\bftx_\ast^k,\bftu_\ast^k)\big)\bftw_\ast^k \nonumber \\
	\leq&\ c_2 \sum_{k=n-q+1}^n \| \bfe_u^k \|_{\bfK^k}^2 + \rho \sum_{k=n-q+1}^n \| \bfe_w^k \|_{\bfM^k}^2 \nonumber \\
	&\ + \rho \tau \sum_{k=q+1}^n \| \bfde_u^k \|_{\bfK^k}^2 + c \, \epsilon_h^n, \label{eq:Estimate Dr1 2}
\end{align}
where we used \eqref{eq:dG Est inText} and the regularity of the exact solution to estimate the terms remaining from the telescope sum.

For the term $(VI)$ we use the argument from \cite[equation~(6.38)]{BullerjahnKovacs2024}, to obtain
\begin{align}
	\tau \sum_{k=q+1}^n (\bfde_w^k)^\top \tM_\ast^k \bfd_u^k \leq&\ \rho \sum_{k=n-q+1}^n \| \bfe_w^k \|_{\bfK^k}^2 + c \, \epsilon_h^n + c \, D_h^n. \label{eq:Estimate Dr1 3}
\end{align}

Finally, the most difficult term $(III)$ requires a new argument to eliminate the discrete derivative, since we have a product with error terms in $\bfw$. The semi-discrete proof \cite[equation~(5.40)]{KovacsLiLubich2021} treats this term by a product rule similar to the estimate on the left-hand side. We have already seen that this needs Lemma~\ref{Lemma:Dahlquist} and Lemma~\ref{Lemma:MultiplierTechnique} in the fully discrete setting, but these only allow estimates from below, which is why we need the suitable estimate Lemma~\ref{Lemma:DahlquistUpper} for an upper bound, see Remark~\ref{Rem:UpperBound}. 

Following this plan, using the new Lemma~\ref{Lemma:DahlquistUpper}, we estimate
\begin{align}
	%&\ \tau \sum_{k=q+1}^n (\bfde_w^k)^\top \bfF(\bftx^k,\bftu^k) \bfte_w^k \nonumber \\
	(III)\leq&\ \sum_{k=q+1}^n \sum_{i,j=1}^q B_{ij} (\bfe_w^{k-q+i})^\top \bfF(\bftx^k,\bftu^k) \bfe_w^{k-q+j} - \sum_{i,j=1}^q B_{ij} (\bfe_w^{k-q+i-1})^\top \bfF(\bftx^k,\bftu^k) \bfe_w^{k-q+j-1} \nonumber \\
	=&\ \sum_{i,j=1}^q B_{ij} (\bfe_w^{n-q+i})^\top \bfF(\bftx^n,\bftu^n) \bfe_w^{n-q+j} - \sum_{i,j=1}^q B_{ij} (\bfe_w^{i})^\top \bfF(\bftx^q,\bftu^q) \bfe_w^{j} \nonumber \\
	&\ + \sum_{k=q+1}^n \sum_{i,j=1}^q B_{ij} (\bfe_w^{k-q+i-1})^\top (\bfF(\bftx^{k-1},\bftu^{k-1})-\bfF(\bftx^k,\bftu^k)) \bfe_w^{k-q+j-1} \nonumber \\
	\leq&\ c_3 \sum_{k=n-q+1}^n \| \bfe_w^k\|_{\bfM^k}^2 + \rho \tau \sum_{k=q+1}^n \| \bfde_u^k \|_{\bfK^k}^2 + c \, \epsilon_h^n, \label{eq:Estimate Dr1 4}
\end{align}
where in the last step we have used estimate \eqref{eq:oTau bound F}, as in \eqref{eq:EnergyILHS}, together with \eqref{eq:W1InftyEstimates}.

Inserting the estimates \eqref{eq:Estimate Dr1 1}, \eqref{eq:Estimate Dr1 2}, \eqref{eq:Estimate Dr1 3}, and \eqref{eq:Estimate Dr1 4} into \eqref{eq:Estimate Dr1 tested}, we obtain
\begin{align}
	\tau \sum_{k=q+1}^n (\bfde_w^k)^\top \bfr_1^k \leq&\ c_3 \sum_{k=n-q+1}^n \| \bfe_w^k\|_{\bfM^k}^2 + c_2 \sum_{k=n-q+1}^n \| \bfe_u^k \|_{\bfK^k}^2 + \rho \tau \sum_{k=q+1}^n \| \bfde_u^k \|_{\bfK^k}^2 \nonumber \\
	&\ + \rho \sum_{k=n-q+1}^n \| \bfe_w^k\|_{\bfK^k}^2 + c \, \epsilon_h^n + c \, D_h^n. \label{eq:Estimate Dr1}
\end{align} 

Now combining the estimate \eqref{eq:Estimate Dr1} with \eqref{eq:Energy4 tested}, we finally arrive at the energy estimate (iv):
\begin{align}
	\sum_{k=n-q+1}^n \| \bfe_w^k \|_{\bfA^k}^2 + \tau \sum_{k=q+1}^n \| \bfde_u^k \|_{\bfA^k}^2 \leq&\ c_3 \sum_{k=n-q+1}^n \| \bfe_w^k\|_{\bfM^k}^2 + c_2 \sum_{k=n-q+1}^n \| \bfe_u^k \|_{\bfK^k}^2  \nonumber \\ 
	&\ + c_1 \tau \sum_{k=q+1}^n \|\bfde_u^k\|_{\bfM^k}^2  + \rho \sum_{k=n-q+1}^n \| \bfe_w^k\|_{\bfK^k}^2 \nonumber \\
	&\ + \rho \tau \sum_{k=q+1}^n \|\bfde_u^k\|_{\bfK^k}^2 + c \, \epsilon_h^n + c \, D_h^n. \label{eq:EnergyIV}
\end{align}

(A.3) \emph{Combining the energy estimates }(i)--(iv): We multiply \eqref{eq:EnergyIII} with $2\max\{c_1,c_3\}$ and sum with \eqref{eq:EnergyIV}, absorb terms (by the previously chosen factor and choosing $\rho>0$ sufficiently small), to obtain 
\begin{align}
	\sum_{k=n-q+1}^n \| \bfe_w^k \|_{\bfK^k}^2 + \tau \sum_{k=q+1}^n \| \bfde_u^k \|_{\bfK^k}^2 \leq&\ c_2 \sum_{k=n-q+1}^n \| \bfe_u^k \|_{\bfK^k}^2 + c \, \epsilon_h^n + c \, D_h^n. \label{eq:EnergyA.2}
\end{align}

Finally, we again multiply \eqref{eq:EnergyA.1} with $2c_2$ and sum with \eqref{eq:EnergyA.2}, and absorb terms, to arrive at the final energy estimate of Part A:
\begin{align}
	\sum_{k=n-q+1}^n \| \bfe_u^k \|_{\bfK^k}^2 + \tau \sum_{k=q+1}^n \| \bfe_w^k \|_{\bfK^k}^2 + \sum_{k=n-q+1}^n \| \bfe_w^k \|_{\bfK^k}^2 + \tau \sum_{k=q+1}^n \| \bfde_u^k \|_{\bfK^k}^2 \leq&\ c \, \epsilon_h^n + c \, D_h^n. \label{eq:EnergyA}
\end{align}

\emph{(B) Estimates for the velocity equation:} The estimate for the error in velocity,
\begin{align}
	\| \bfe_v^n \|_{\bfK(\bfx_\ast^n)} \leq c \, \big(\|\bfe_u^n \|_{\bfK^n} + \| \bfe_w^n \|_{\bfK^n}\big) + \| \bfd_v \|_{\bfK(\bfx_\ast^n)}, \label{eq:EnergyBv}
\end{align}
is obtained exactly as in the semi-discrete proof \cite[equation~(5.44)]{KovacsLiLubich2021}, using \eqref{eq:error eq - v}, \eqref{eq:W1InftyEstimates} and \cite[Lemma~5.3]{KovacsLiLubich2021}. 

%\begin{lem}[{\cite[Lemma~5.3]{KovacsLiLubich2021}}]
%\label{lemma:interpolation} 
%	For an admissible triangulation of a smooth surface  $\Gamma$, let $\Gamma_h^*$ be the interpolated surface with finite elements of polynomial degree $k\ge 1$.
%	Let  $\widetilde I_h^*:C(\Gamma_h^*)\to S_h(\Gamma_h^*)$ denote the finite element interpolation operator on $\Ga_h^*$. Then, the interpolation of the product of two finite element functions $a_h, b_h$ on $\Gamma_h^*$ is bounded by
%	$$
%	\| \widetilde I_h^* (a_h b_h) \|_{H^1(\Gamma_h^*)}  \le C\, \| a_h \|_{H^1(\Gamma_h^*)} \, \| b_h \|_{W^{1,\infty}(\Gamma_h^*)},
%	$$
%	where $C$ depends only on $\Gamma$ (more precisely, on bounds of higher derivatives of a parametrization of $\Gamma$), on shape-regularity and quasi-uniformity of the triangulation, and on the degree $k$.
%\end{lem}

The error estimate for the position is obtained, as in \cite[equation~(10.36)]{KovacsLiLubich2019}, by the estimate 
\begin{align*}
	\| \bfe_x^n \|_{\bfK^n}^2 \leq c \, \tau \sum_{k=q}^n \|\bfde_x^n \|_{\bfK^n}^2 + c \sum_{i=0}^{q-1} \| \bfe_x^i \|_{\bfK^n}^2.
\end{align*}
Together with \eqref{eq:error eq - x} herein, this yields
\begin{align}
	\| \bfe_x^n \|_{\bfK^n}^2 \leq c \, \tau \sum_{k=q}^n \| \bfe_v^k \|_{\bfK^k}^2 + c \, I_h^{q-1} + c \, D_h^n. \label{eq:EnergyBx}
\end{align}

\emph{(C) Combination:} Combining the bounds \eqref{eq:EnergyA}, \eqref{eq:EnergyBv}, and \eqref{eq:EnergyBx}, using a discrete Gr\"onwall inequality and the equivalence between the norms corresponding to $\bftx^n$ and $\bfx_\ast^n$ yields the stability estimate \eqref{eq:StabilityBound}. 

Finally, we also obtain $t_{\text{max}}=T$ by the assumed bounds on the defects \eqref{eq:ConditionDefectsSatbility} and initial data \eqref{eq:AssInitialValues} and the step size restriction $\tau^q \leq C_0 h^k$, as in \cite{KovacsLiLubich2019}.

\end{proof}

\begin{rem}
	Note that we only need to restrict the order of the BDF method to $1\leq q \leq2$ in order to obtain the estimate \eqref{eq:Estimate Dr1 4}, via Lemma~\ref{Lemma:DahlquistUpper}. All other estimates can be shown for $1 \leq q \leq 5$ by an adaptation of the presented arguments involving the multiplier $\eta_q$ from Lemma~\ref{Lemma:MultiplierTechnique}, see, e.g., the stability proof in \cite[Section~6.4]{BullerjahnKovacs2024}.
	
	By these necessary adaptations, the resulting critical term in \eqref{eq:Estimate Dr1 4} takes the form
	\begin{equation*}
		\tau \sum_{k=q+1}^n (\bfde_w^k)^\top \bfF(\bftx^k,\bftu^k) (\bfte_w^k - \eta_q \bfte_w^{k-1}),
	\end{equation*}
	and as already indicated in Remark~\ref{rem: Upper bound q345}, for which -- up to our knowledge -- a similar upper bound is not yet available. However, we hope that this can be overcome in future work and, for this reason, we have kept most of our estimates generally applicable for the $q$-step BDF methods for $q=3,4,5$.

\end{rem}

\section{Consistency and Proof of Theorem~\ref{Theorem:MainConvergence}} \label{Sec:Consistency}
In this section we show that the fully discrete defects and their discrete derivatives are bounded by $\calO(h^k+\tau^q)$ in the appropriate norms appearing in the stability bounds of Proposition~\ref{Prop:StabilityEst}. Together with the bounds in the initial values, see \cite[Proposition~6.2]{KovacsLiLubich2021}, this completes the proof of Theorem~\ref{Theorem:MainConvergence} by a standard argument combining the stability and consistency arguments, see, e.g., \cite[Section~9]{KovacsLiLubich2019}.

The fully discrete defect terms are estimated by splitting the defect terms in a spatial and temporal part, the spatial part is estimated by the semi-discrete defect bounds in \cite[Section~6.1]{KovacsLiLubich2021}, while the temporal part is estimated following \cite[Section~11]{KovacsLiLubich2019}. The most difficult part here are the discrete derivative of the defect terms, where we apply the techniques developed in \cite[Section~7]{BullerjahnKovacs2024}. A careful combination of these arguments for the various terms yields the following result.

\begin{prop}
\label{Prop:Consistency}
	Under the assumptions of Theorem~\ref{Theorem:MainConvergence}, the defects $d_x^n \in S_h(\Ga_h[x_\ast^n])^3$, $d_v^n \in S_h(\Ga_h[x_\ast^n])^3$, $d_u^n \in S_h(\Ga_h[x_\ast^n])^4$, and $d_w^n \in S_h(\Ga_h[x_\ast^n])^4$ of the $k$th-degree finite elements and $q$-step backward difference formula, as defined by their nodal vectors $\bfd_x$, $\bfd_v$, $\bfd_u$, and $\bfd_w$, respectively in \eqref{eq:matrix-form-X-v-star} and \eqref{eq:defect vectors}, are bounded by
	\begin{equation}
		\label{eq:defect bounds}
		\begin{alignedat}{4}
			&\| \bfd_x^n \|_{\bfK(\bfx_\ast^n)} \leq c \, \tau^q,  \qquad &&\| \bfd_v^n \|_{\bfK(\bfx_\ast^n)} \leq c h^k , \\ 
			&\| \bfd_u^n \|_{\ast,\bfx_\ast^n} \leq c(h^k + \tau^q), \qquad &&\| \bfdd_u^n \|_{\ast,\bfx_\ast^n} \leq c(h^k + \tau^q), \\
			&\| \bfd_w^n \|_{\ast,\bfx_\ast^n} \leq c(h^k + \tau^q), \qquad &&\| \bfdd_w^n \|_{\ast,\bfx_\ast^n} \leq c(h^k + \tau^q).
		\end{alignedat}
	\end{equation}
	The constant $c>0$ is independent of $h$, $\tau$ and $n$ with $n\tau \leq T$.
\end{prop}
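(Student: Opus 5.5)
Our plan is to split each fully discrete defect into a spatial part, which is the semi-discrete defect of \cite[Section~6.1]{KovacsLiLubich2021} evaluated at $t_n$, and a temporal part, which comes from replacing time derivatives by BDF differences and exact values by extrapolations. For $\bfd_x^n$ the nodal vectors $\bfx_\ast$ and $\bfv_\ast$ are exact nodal values, so $\dot\bfx_\ast(t)=\bfv_\ast(t)$ holds exactly. Hence $\bfd_x^n=\partial_q^\tau\bfx_\ast^n-\dot\bfx_\ast(t_n)$ is a pure BDF truncation error. Writing it via Peano's kernel as an integral of $\partial_t^{q+1}X$ over $[t_{n-q},t_n]$ gives $\|\bfd_x^n\|_{\bfK(\bfx_\ast^n)}\le c\tau^q$, using the regularity $\partial_t^jX\in H^1$ and the $h$-uniform equivalence of the norms along the exact trajectory. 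The velocity defect $\bfd_v^n$ contains no time discretization. It coincides with the semi-discrete velocity defect at $t_n$, bounded by $ch^k$ in \cite{KovacsLiLubich2021}; this bound rests on the interpolation/Ritz map estimates.

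For $\bfd_u^n$ and $\bfd_w^n$ we subtract the semi-discrete defect equations at $t_n$ (matrix form \eqref{eq:matrix--vector form} with defects $\bfM(\bfx_\ast)\bfd_u^{\rm sd}$, $\bfM(\bfx_\ast)\bfd_w^{\rm sd}$) from \eqref{eq:defect vectors}. The difference $\bfM(\bftx_\ast^n)\bfd_u^n-\bfM(\bfx_\ast^n)\bfd_u^{\rm sd}(t_n)$ is then a sum of terms of three kinds. The first kind are $\bfM(\bftx_\ast^n)(\bfdu_\ast^n-\dot\bfu_\ast(t_n))$, which are $O(\tau^q)$ by Taylor expansion of the Ritz map in time. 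The second kind are matrix differences $(\bfM(\bftx_\ast^n)-\bfM(\bfx_\ast^n))\cdot$ and $(\bfA(\bftx_\ast^n)-\bfA(\bfx_\ast^n))\cdot$, where $\bftx_\ast^n-\bfx_\ast^n=O(\tau^q)$ in $W^{1,\infty}$, so \eqref{matrix difference bounds} gives $O(\tau^q)$. The third kind are nonlinearity differences $\bfF(\bftx_\ast^n,\bftu_\ast^n)\bftw_\ast^n-\bfF(\bfx_\ast^n,\bfu_\ast^n)\bfw_\ast^n$, and the analogous ones for $\bff,\bfg$, which are bounded by local Lipschitz continuity using $W^{1,\infty}$ bounds of the Ritz maps. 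Each such term is tested against $\varphi_h$ and divided by $\|\varphi_h\|_{H^1}$. This yields the dual norm bound $c(h^k+\tau^q)$, where we use that the dual norms for $\bftx_\ast^n$ and $\bfx_\ast^n$ are equivalent.

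The main obstacle is the bound on the discrete derivatives $\dot\bfd_u^n,\dot\bfd_w^n$ in the dual norm. Here we follow \cite[Section~7]{BullerjahnKovacs2024}. We apply $\partial_q^\tau$ to the identity expressing $\bfd_w^n$ and use the factorization $\delta(\zeta)=(1-\zeta)\sigma(\zeta)$ to reduce everything to first differences $\partial^\tau$. A discrete product rule puts the difference either on a matrix, which gives an $O(1)$ factor by \eqref{eq:oTau bound}, or on a smooth time-dependent quantity. Each temporal remainder, for instance $\partial^\tau(\bfdu_\ast-\dot\bfu_\ast)$, is again a Peano-kernel integral, now of one order higher derivative. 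This explains the assumptions $q+3$ derivatives for $u$ and $q+2$ for $w$, and it gives $O(\tau^q)$. The spatial part is $\partial^\tau\bfd^{\rm sd}(t_n)=\frac1\tau\int_{t_{n-1}}^{t_n}\frac{d}{dt}\bfd^{\rm sd}$. Bounding it by $ch^k$ requires the $h^k$ dual-norm estimate for the time derivative of the semi-discrete defects, together with care about the time dependence of the surface $\Ga_h[\bfx_\ast(t)]$ in the mass matrix. We would derive that estimate from the material-derivative Ritz map bounds of \cite[Section~6]{KovacsLiLubich2021}, as done for mean curvature flow in \cite[Section~11]{KovacsLiLubich2019}. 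Combining these bounds with Proposition~\ref{Prop:StabilityEst} then proves Theorem~\ref{Theorem:MainConvergence}.
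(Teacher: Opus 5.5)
\textbf{Overall.} You follow the paper's decomposition:
\begin{itemize}
\item $\bfd_x^n$ is treated as a pure BDF truncation error, as in \cite[Lemma~11.1]{KovacsLiLubich2019};
\item $\bfd_v^n$ is taken over from the semi-discrete analysis;
\item $\bfd_u^n$ and $\bfd_w^n$ are split into the semi-discrete defect at $t_n$ plus time-difference, extrapolation, matrix-difference and nonlinearity-difference terms, which is exactly the paper's splitting of $\bfM(\bftx_\ast^n)\bfd_u^n$;
\item the discrete derivatives are handled via $\delta(\zeta)=(1-\zeta)\sigma(\zeta)$, \eqref{eq:oTau bound} and Peano kernels, following \cite[Section~7]{BullerjahnKovacs2024}.
\end{itemize}
For $n\ge 2q$ this argument is fine.

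\textbf{The gap: the start-up range.} You do not cover $q\le n<2q$ for $\bfdd_u^n$ and $\bfdd_w^n$. The stability proof does use this range (e.g.\ $k=3$ when $q=2$). There $\bfdd_w^n=\frac1\tau\sum_{j=0}^q\delta_j\bfd_w^{n-j}$ involves indices $n-j<q$. At those indices \eqref{eq:defect vectors} does not exist, since it would require $\bfx_\ast,\bfu_\ast$ at negative indices. So ``applying $\partial_q^\tau$ to the identity expressing $\bfd_w^n$'' is undefined there.

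This is not cosmetic. Suppose you use the only values available there, the semi-discrete defects $\bfd_w(t_i)$, which are also what enters \eqref{eq:DefVartheta}. Write $\bfd_w^m=\bfd_w(t_m)+\mathbf{T}^m$, with temporal part $\mathbf{T}^m=O(\tau^q)$ for $m\ge q$ and $\mathbf{T}^m=0$ for $m<q$. Then
\[
\bfdd_w^n=\partial_q^\tau\bfd_w(t_n)+\frac1\tau\sum_{j=0}^{n-q}\delta_j\,\mathbf{T}^{n-j} .
\]
For $n<2q$ the partial sums $\sum_{j=0}^{n-q}\delta_j$ do not vanish. Moreover $\mathbf{T}^m$ is generically of size exactly $\tau^q$; for example, it contains the extrapolation error $\bftu_\ast^m-\bfu_\ast^m$ inserted into $\bfg$. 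Hence the last term is only $O(\tau^{q-1})$, not $O(\tau^q)$.

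\textbf{How the paper handles it.} The paper's proof uses a device taken from \cite[Section~7.3]{BullerjahnKovacs2024}: it extends the backward difference and extrapolation by a Hermite interpolation before applying the Peano-kernel representation. As I read it, the values needed below index $q$ then come from a smooth function of time, so the kernel bound applies uniformly for all $n\ge q$. You need this extension, or an explicit separate argument for the first $q$ steps, to get the stated bound for every $n$ with $n\tau\le T$.
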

\begin{proof}
	The bound in $\bfd_x^n$ is obtained exactly as in \cite[Lemma~11.1]{KovacsLiLubich2019}. The term $\bfd_v^n$ does not contain any temporal approximation, therefore the bound simply follows by the semi-discrete bound in \cite[Proposition~6.1]{KovacsLiLubich2021}. The term $\bfd_u^n$ is split, as in \cite[Lemma~11.1]{KovacsLiLubich2019}, into
	\begin{align}
		\bfM(\bftx_\ast^n) \bfd_u^n =&\ \bfM(\bfx_\ast^n) \bfd_u(t_n) + \bfM(\bftx_\ast^n) (\bfdu_\ast^n - \partial_t \bfu_\ast(t_n)) + (\bfM(\bftx_\ast^n)-\bfM(\bfx_\ast^n)) \partial_t \bfu_\ast(t_n) \nonumber \\
		&\ + (\bfA(\bftx_\ast^n) - \bfA(\bfx_\ast^n)) \bfw_\ast^n - \bfF(\bftx_\ast^n,\bftu_\ast^n) (\bftw_\ast^n - \bfw_\ast(t_n)) \nonumber \\
		&\ - (\bfF(\bftx_\ast^n,\bftu_\ast^n)-\bfF(\bfx_\ast^n,\bfu_\ast^n)) \bfw_\ast(t_n) - (\bff(\bftx_\ast^n,\bftu_\ast^n) - \bff(\bfx_\ast^n,\bfu_\ast^n)),
	\end{align}
	and then estimated by either the spatial defect estimates shown in \cite[Proposition~6.1]{KovacsLiLubich2021}, or by the temporal estimates following \cite[Lemma~11.1]{KovacsLiLubich2019}.
	
	The estimates for the discrete derivative of the defect terms is split in the same way and estimated by the techniques developed in \cite[Section~7.3]{BullerjahnKovacs2024}, extending the notion of the backward difference and extrapolation by a Hermite interpolation an estimating via a Peano kernel representation. Together with the estimates \eqref{eq:oTau bound} and splitting the non-linear part, e.g., as in \eqref{eq:oTau bound F}, this yields the stated bounds \eqref{eq:defect bounds}.
\end{proof}

\medskip
As in \cite{KovacsLiLubich2021} for the semi-discrete case, the fully discrete error estimates follow by combining the stability estimates, Proposition~\ref{Prop:StabilityEst}, the consistency bounds, Proposition~\ref{Prop:Consistency}, and the error estimates for the Ritz map and the interpolation operator.

\section{Numerical experiments} \label{Sec:Numerical Results}

In order to illustrate and complement our theoretical findings in Theorem~\ref{Theorem:MainConvergence}, we present two numerical experiments:
\begin{enumerate}
	\item A spatial convergence test using stationary solutions of Willmore flow, i.e., a sphere and a Clifford torus, repeating the experiments of \cite[Section~7.1]{KovacsLiLubich2021} using the modified system.
	\item A temporal convergence test using a reference solution of Willmore flow on an ellipsoid or oblate spheroid with semi-axis lengths of $2,2$ and $1$.
\end{enumerate}

The numerical experiments were performed using quadratic evolving surface finite elements, implemented using $\ell$FEM \cite{ellFEM}, and linearly implicit backward difference method of order $1$ and $2$, in \textsc{Matlab}. The initial meshes for all surfaces were generated using DistMesh \cite{PerssonStrang2004}, without taking advantage of the symmetries of the surfaces. As initial values for the BDF method, we used the nodal interpolation of the exact initial values. In all Figures the $L^\infty(H^1)$-norm errors for the surface $X$, normal vector $\nu$ and mean curvature $H$ are shown against the time step size, and respectively mesh size, in a logarithmic plot. 

%\begin{figure}[!t]
%	\centering\includegraphics[width=\textwidth]{Figures/convplot_Willmore_Sphere_T2_BDF2_space_Linfty}
%	\caption{Temporal convergence plot for the BDF2/quadratic ESFEM approximation of the Willmore flow for the sphere.}
%	\label{fig:TempConvSphere}
%\end{figure}

\begin{figure}[hbp]
	\centering\includegraphics[width=\textwidth]{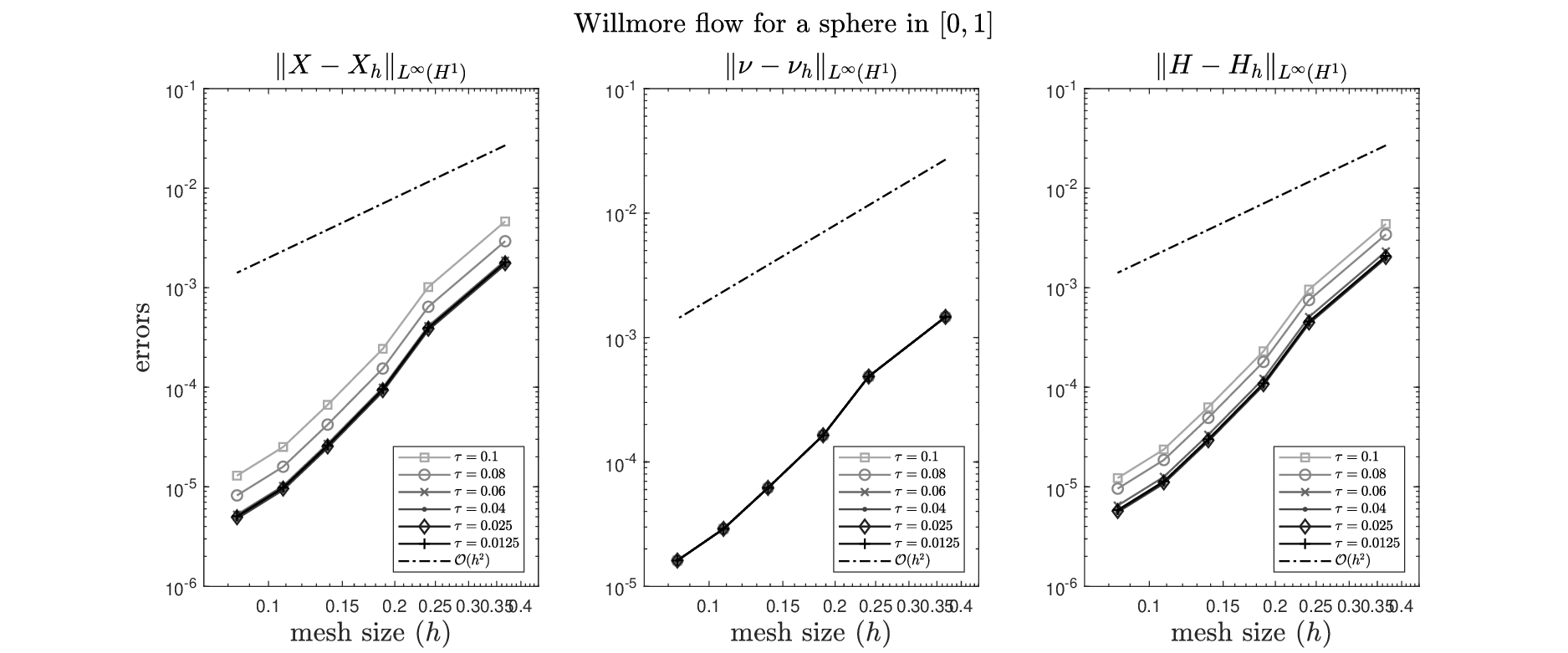}
	\caption{Spatial convergence plot for the BDF2/quadratic ESFEM approximation of the Willmore flow for the sphere.}
	\label{fig:SpaceConvSphere}
\end{figure}

In Figure~\ref{fig:SpaceConvSphere}, we report on the errors between the numerical and exact solutions for the Willmore flow of a sphere of radius $R=1$ on the time interval $[0,1]$, using the BDF2 method. We used a sequence of time step sizes $\tau_k=0.1,0.08,0.06,0.04,0.025,0.0125$ and a sequence of meshes with mesh sizes $h_j=0.2405,0.18724,0.13828,0.1083,0.084061,0.075318$.

\begin{figure}[htbp]
	\centering\includegraphics[width=\textwidth]{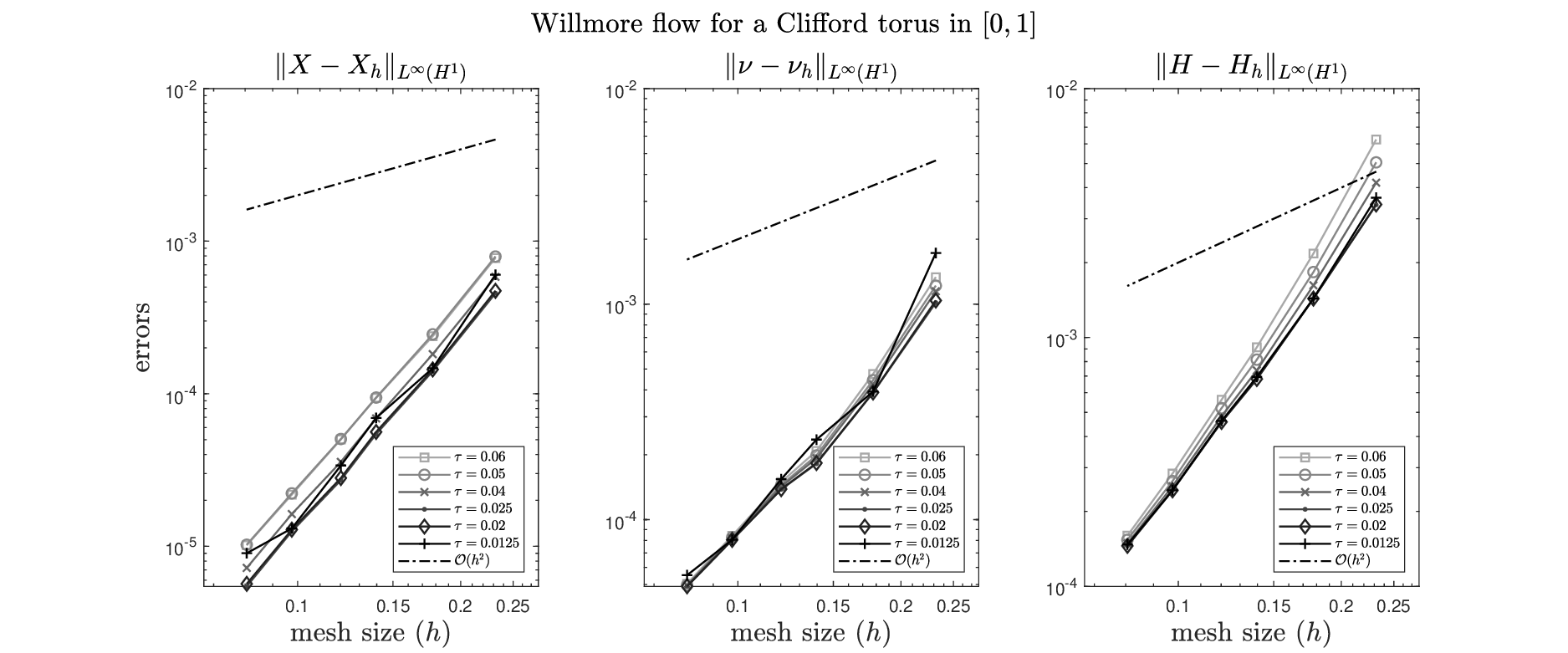}
	\caption{Spatial convergence plot for the BDF1/quadratic ESFEM approximation of the Willmore flow for the Clifford torus.}
	\label{fig:SpaceConvTorus}
\end{figure}

While in Figure~\ref{fig:SpaceConvTorus}, we report on the errors between the numerical and exact solutions for the Willmore flow of a Clifford torus with major radius $R=1$ and minor radius $r=1/\sqrt{2}$ on the time interval $[0,1]$, using graded meshes, refining the area with higher curvature, and the BDF1 method. We used a sequence of time step sizes $\tau_k=0.06,0.05,0.04,0.025,0.02,0.0125$ and a sequence of meshes with mesh sizes $h_j=0.23189,0.17761,0.13964,0.1202,0.097577,0.080559$.

\begin{figure}[htbp]
	\centering\includegraphics[width=\textwidth]{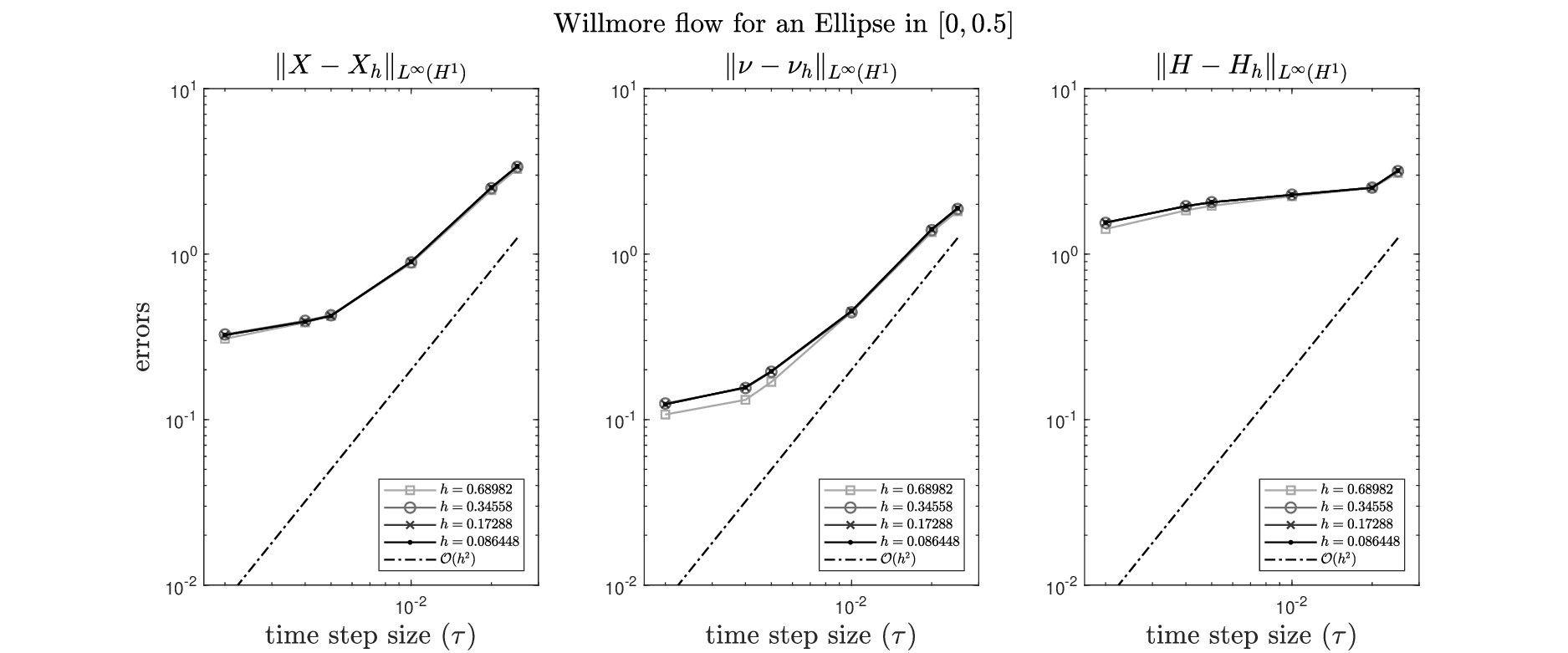}
	\caption{Temporal convergence plot for the BDF2/quadratic ESFEM approximation of the Willmore flow for the ellipsoid.}
	\label{fig:TempConvEll}
\end{figure} 

Finally in Figure~\ref{fig:TempConvEll}, we report on the errors between the numerical and a reference solutions (computed on a refined mesh of size $h=0.043226$ with time step size $\tau=10^{-4}$) for Willmore flow of an ellipsoid with semi-axis lengths of $2.2$ and $1$, on the time interval $[0,0.5]$, using the BDF2 method. We used a sequence of time step sizes $\tau_k=0.025,0.02, 0.01, 0.005, 0.004, 0.002$ and a sequence of meshes with mesh sizes $h_j=0.68982,0.34558,0.17288,0.086448$. 

%\begin{figure}[!t]
%	\centering\includegraphics[width=\textwidth]{Figures/Time_Conv_Ell2}
%	\caption{Temporal convergence plot for the BDF2/quadratic ESFEM approximation of the Willmore flow for the ellipsoid.}
%	\label{fig:TempConvEll2}
%\end{figure} 
%
%Finally in Figure~\ref{fig:TempConvEll2}, we report on the errors between the numerical and a reference solutions (computed on a refined mesh of size $h=0.043226$ with time step size $\tau=10^{-4}$) for Willmore flow of an ellipsoid or oblate spheroid with semi-axis lengths of $2,2$ and $1$, on the time interval $[0,0.5]$, using the BDF2 method. We used a sequence of time step sizes $\tau_k=0.125, 0.025,0.02, 0.01, 0.005, 0.004, 0.002$ and a sequence of meshes with mesh sizes $h_j=0.68982,0.34558,0.17288,0.086448$. 

In the first experiment the observed convergence in space as shown in Figures~\ref{fig:SpaceConvSphere} and \ref{fig:SpaceConvTorus}, is in agreement with the theoretical results of Theorem~\ref{Theorem:MainConvergence} (note the reference lines) and replicates the experiments already conducted in \cite{KovacsLiLubich2021}. Since the temporal evolution is stationary in the first example we conducted the second experiment, where the observed convergence in time as shown in Figures~\ref{fig:TempConvEll} again illustrates the theoretical results.

\bibliography{willmore}% common bib file
%% if required, the content of .bbl file can be included here once bbl is generated
%\input sn-article.bbl

\end{document}